\newcommand\legendre[2]{{#1\overwithdelims () #2}}
\newenvironment{romanenum}{\hfill \begin{enumerate} }{\end{enumerate}}
\newenvironment{alphenum}{\hfill \begin{enumerate} }{\end{enumerate}}
\newcommand\footnoteref[1]{\protected@xdef\@thefnmark{\ref{#1}}\@footnotemark}
\newcommand{\cC}{{\mathcal{C}}}
\newcommand{\Q}{\mathbb{Q}}
\newcommand{\Z}{\mathbb{Z}}
\newcommand{\F}{\mathbb{F}}
\newcommand{\Qbar}{{\overline{\mathbb Q}}}
\DeclareMathOperator{\GL}{GL}
\DeclareMathOperator{\tors}{tors}
\newtheorem{tm}{Theorem}[section]
\newtheorem{proposition}[tm]{Proposition}
\newtheorem{lemma}[tm]{Lemma}
\newtheorem{corollary}[tm]{Corollary}
\newtheorem{conjecture}[tm]{Conjecture}
\theoremstyle{definition}
\theoremstyle{remark}
\newtheorem{remark}[tm]{Remark}
\DeclareMathOperator{\Gal}{Gal}
\newcommand{\Aut}{\operatorname{Aut}}
\def\diam#1{\langle#1\rangle}
\newcommand{\lmfdbec}[3]{\href{http://www.lmfdb.org/EllipticCurve/Q/#1/#2/#3}{#1#2#3}}
\title[Growth of torsion groups of elliptic curves]{Growth of torsion groups of\\ elliptic curves upon base change}
\author{Enrique Gonz\'alez--Jim\'enez}
\address{Universidad Aut{\'o}noma de Madrid, Departamento de Matem{\'a}ticas, Madrid, Spain}
\email{enrique.gonzalez.jimenez@uam.es}
\urladdr{http://matematicas.uam.es/~enrique.gonzalez.jimenez}
\author{Filip Najman}
\address{University of Zagreb, Bijeni\v{c}ka cesta 30, 10000 Zagreb, Croatia}
\email{fnajman@math.hr}
\urladdr{http://web.math.pmf.unizg.hr/~fnajman/}
\thanks{The first author was partially supported by the grant MTM2015--68524--P. The second author gratefully acknowledges support from the QuantiXLie Center of Excellence.}
\date{\today}
\keywords{Elliptic curves, torsion over number fields}
\subjclass[2010]{11G05}
\begin{document}

\begin{abstract}
 We study how the torsion of elliptic curves over number fields grows upon base change, and in particular prove various necessary conditions for torsion growth. For a number field $F$, we show that for a large set of number fields $L$, whose Galois group of their normal closure over $F$ has certain properties, it will hold that $E(L)_{\tors}=E(F)_{\tors}$ for all elliptic curves $E$ defined over $F$.

Our methods turn out to be particularly useful in studying the possible torsion groups $E(K)_{\tors}$, where $K$ is a number field and $E$ is a base change of an elliptic curve defined over $\Q$. Suppose that $E$ is a base change of an elliptic curve over $\Q$ for the remainder of the abstract. We prove that $E(K)_{\tors}=E(\Q)_{\tors}$ for all elliptic curves $E$ defined over $\Q$ and all number fields $K$ of degree $d$, where $d$ is not divisible by a prime $\leq 7$. Using this fact, we determine all the possible torsion groups $E(K)_{\tors}$ over number fields $K$ of prime degree $p\geq 7$. We determine all the possible degrees of $[\Q(P):\Q]$, where $P$ is a point of prime order $p$ for all $p$ such that $p\not\equiv 8 \pmod 9$ or $\left( \frac{-D}{p}\right)=1$ for any $D\in \{1,2,7,11,19,43,67,163\}$; this is true for a set of density $\frac{1535}{1536}$ of all primes and in particular for all $p<3167$. Using this result, we determine all the possible prime orders of a point $P\in E(K)_{\tors}$, where $[K:\Q]=d$, for all $d\leq 3342296$. Finally, we determine all the possible groups $E(K)_{\tors}$, where $K$ is a quartic number field and $E$ is an elliptic curve defined over $\Q$ and show that no quartic sporadic point on a modular curves $X_1(m,n)$ comes from an elliptic curve defined over $\Q$.

\end{abstract}

\maketitle
\tableofcontents
\section{Introduction}
Let $E/F$ be an elliptic curve defined over a number field $F$. The Mordell-Weil Theorem states that the set $E(F)$ of $F$-rational points is a finitely generated abelian group. Denote by $E(F)_{\tors}$, the torsion subgroup of $E(F)$. It is well known that the possible torsion groups are of the shape $\cC_m\times\cC_n$ for two positive integers $n,m$, where $m$ divides $n$ and where $\cC_k$ is a cyclic group of order $k$.

The purpose of this paper is to shed light on how the torsion group $E(F)_{\tors}$ grows to $E(L)_{\tors}$, where $E$ is defined over a number field $F$ and $L$ is a field extension of $F$. We will mostly be interested in the case where $L$ is also a number field, although some of our results apply to infinite extensions of $F$ - for example in Corollary \ref{cor:zp} we prove that the torsion of elliptic curves over $\Q$ does not grow in $\Z_p$-extensions of $\Q$ for $p\geq 11$. Our results give necessary conditions on the degrees and Galois groups of extensions of number fields $L/F$ depending on the growth from $E(F)_{\tors}$ to $E(L)_{\tors}$. The most general results of this type are proved Section \ref{sec2}, where $L/F$ is an arbitrary extension of number fields.

Naturally, we can obtain much more precise results if we specialize to the case of $F=\Q$. We will do this in Section \ref{sec:RQD}. Suppose till the end of the introduction that $E$ is always defined over $\Q$. For $P\in E(\overline \Q)$ define $\Q(P)$ to be the number field obtained by adjoining the coordinates of $P$ to $\Q$. We study the problem of determining, for a given prime $p$, all the possible values of the degree of $[\Q(P):\Q]$, for all points $P\in E(\overline \Q)$ of order $p$ on all elliptic curves $E/\Q$. This is done by studying the lengths of orbits of $E[p]$ under the action of the absolute Galois group $\Gal(\Qbar/\Q)$. Here we rely heavily on the knowledge of the possible images $G_E(p)$ of mod $p$ Galois representations attached to elliptic curves over $\Q$. While all the possibilities for $G_E(p)$ are not yet known in general, the state of the knowledge is good enough for our results, stated in Theorem \ref{dQ}, to be unconditional for all $p$ such that $p\not\equiv 8 \!\pmod 9$ or $\left( \frac{-D}{p}\right)=1$ for any $D\in \{1,2,7,11,19,43,67,163\}$. The set of primes satisfying these conditions has natural density $\frac{1535}{1536}$ and the smallest prime that does not satisfy these conditions is $p=3167$. Should Serre's uniformity conjecture \ref{serre_conj} or its weaker version, Conjecture \ref{uniformity_conj}, be proved our results will hold true for all primes $p$.

An important problem in the theory of elliptic curves is to characterize the possible torsion groups over a given number field, or over number fields of given degree. Before we state known results about this problem, we introduce some notation. For a given positive integer $d$,
\begin{itemize}
\item Let $\Phi(d)$ be the set of possible isomorphism classes of groups $E(K)_{\tors}$, where $K$ runs through all number fields $K$ of degree $d$ and $E$ runs through all elliptic curves over $K$.

\item Let $\Phi_{\Q}(d)\subseteq \Phi(d)$ be the set of possible isomorphism classes of groups $E(K)_{\tors}$, where $K$ runs through all number fields $K$ of degree $d$ and $E$ runs through all elliptic curves defined over $\Q$.

\item Let $\Phi^{\infty}(d)$ be the subset of isomorphism classes of groups $\Phi(d)$  that occur infinitely often. More precisely, a torsion group $G$ belongs to  $\Phi^{\infty}(d)$ if there are infinitely many elliptic curves $E$, non-isomorphic over $\overline{\Q}$, such that $E(K)_{\tors}\simeq G$.

\item Let $R_{\Q}(d)$ be the set of all primes $p$ such that there exists a number field $K$ of degree $d$, an elliptic curve $E/ \Q$ such that there exists a point of order $p$ on $E(K)$.
\end{itemize}

Mazur \cite{maz} determined $\Phi(1)$, while Kamienny, Kenku and Momose \cite{km,kam} determined $\Phi(2)$.  For $d\ge 3$, the set $\Phi(d)$ is, at the moment of the writing of the paper, not known. Merel \cite{merel} proved that $\Phi(d)$ is finite for all positive integers $d$.

It is trivial to see that $\Phi(1)=\Phi^\infty(1)$ and $\Phi(2)=\Phi^\infty(2)$; this follows immediately from the fact that all the modular curves that parameterize the groups from $\Phi(1)$ are of genus 0 and that the modular curves that parameterize the groups from $\Phi(2)$ are of genus $\leq 2$. The set $\Phi^\infty(d)$ has been determined for $d=3$ by Jeon, Kim and Schweizer \cite{jks}; for $d=4$ by  Jeon, Kim and Park \cite{jkp-quartic}; and for $d=5$ and $6$, recently, by Derickx and Sutherland \cite{DS16}.

In \cite{naj} the second author determined $\Phi_\Q(2)$ and $\Phi_\Q(3)$. In this paper we determine $\Phi_\Q(p)$ for all primes $p\geq 7$ (see Corollary \ref{maincor}). Together with the paper \cite{GJ16} of the first author, which determines $\Phi_\Q(5)$, this completes the determination of all possible torsion groups over prime degree number fields of elliptic curves defined over $\Q$.

This paper is, as far as we are aware, the first to study the set $R_\Q(d)$. Lozano-Robledo \cite{loz} determined the sets $S_\Q(d)=\bigcup_{k\leq d}R_\Q(k)$ for $d\leq 42$. Note that the knowledge of $R_\Q(d)$ for all $d\leq B$ obviously gives $S_\Q(d)$ for all $d\leq B$, for any bound $B$, but not vice versa. Hence $R_\Q(d)$ can be considered to be a somewhat "finer" invariant. In section \ref{sec:RQD2} we determine $R_\Q(d)$ for all $d\leq 3342296$. This is done by applying the results of Section \ref{sec:RQD}.

In Section \ref{sec3} we use the results of Section \ref{sec:RQD} to show, in Theorem \ref{l-tors}, that $E(K)_{\tors}=E(\Q)_{\tors}$ for all number fields $K$ of degree $d$, where $d$ is an integer whose smallest prime divisor is $\geq 11$. In particular this applies for all number fields of prime degree $p$, for $p\geq 11$.

 In \cite{naj} it was shown that $\Phi_\Q(3)\not\subseteq \Phi^\infty(3)$, which implies $\Phi(3)\neq\Phi^\infty(3)$. {Derickx and van Hoeij \cite{Dvan14} proved that $\Phi(n)\neq \Phi^\infty(n)$ for $5\leq n \leq 17$}. Thus, it is natural to ask whether $\Phi(4)=\Phi^\infty(4)$? It was proved that $\Phi(3)\neq \Phi^\infty(3)$ by showing $\Phi_\Q(3)\not \subseteq \Phi^\infty(3)$, thus is natural to try to determine $\Phi_\Q(4)$, apart from it being interesting in its own right, for this purpose. This problem, of trying to determine $\Phi_\Q(4)$, originally motivated this work. We solve this problem completely in Section \ref{sec_quartic} and obtain that $\Phi_\Q(4)\subset\Phi^\infty(4)$.

Some of the proofs in the paper rely on extensive computations in \texttt{Magma} \cite{magma}. This primarily applies to computations in fixed finite groups. Where it is obvious that we are doing a computation in a fixed finite group, we will do it often without mention. All of the programs and calculation used for the proofs can be found at the research website of the first author.

\section{Notation and auxiliary results}\label{sec:auxiliary}

In this section we fix notation throughout the paper. If $E$ is an elliptic curve, then $j_E$ will denote its $j$-invariant. We will say that $E$ has CM or is a CM elliptic curve if it has complex multiplication. If it does not have complex multiplication, we say that the curve is non-CM.

There exists an $F$-rational cyclic isogeny $\phi:E\rightarrow E'$ of degree $n$ if and only if $\ker \phi$ is a $\Gal(\overline F / F)$-invariant cyclic group of order $n$; in this case we say that $E/F$ has an \textit{$n$-isogeny}.

In this section we will list, for the sake of the reader, some known results that we will be using throughout the paper.

The possible degrees of $n$-isogenies of elliptic curves over $\Q$ are known by the following theorem.

\begin{tm}[Mazur \cite{mazur2} and Kenku \cite{kenku2, kenku3, kenku4, kenku5}]
\label{tm-isog}
Let $E/ \Q$ be an elliptic curve with an $n$-isogeny over $\Q$. Then
\begin{equation}\label{isogenies}
n\in \{1,\ldots 19,21,25,27,37,43,67,163\}.
\end{equation}
There are infinitely many elliptic curves (up to $\overline{\Q}$-isomorphism) with an $n$-isogeny over $\Q$ for $n\in\{1,\ldots,10, 12,13,16,18,25\}$ and only finitely many for all the other $n$ listed in \eqref{isogenies}.

Furthermore, if $n\in \{14,19,27,43,67,163\}$ and $E$ has an $n$-isogeny over $\Q$, then $E$ has CM.
\end{tm}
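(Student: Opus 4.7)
The plan is to translate the question entirely into a question about rational points on modular curves. Recall that the (compactified) modular curve $X_0(n)$ parametrizes pairs $(E,C)$ with $E$ an elliptic curve and $C$ a cyclic subgroup of order $n$, and that the cusps are the degenerate points. Thus, showing that no elliptic curve $E/\Q$ admits a rational $n$-isogeny amounts to showing that $X_0(n)(\Q)$ consists only of cusps, and showing that there are infinitely many (resp. finitely many) such $E$ up to $\overline\Q$-isomorphism amounts to showing that $X_0(n)(\Q)$ is infinite (resp. finite, after subtracting cusps).

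I would proceed case by case according to the genus of $X_0(n)$. First, for those small $n$ where $X_0(n)$ has genus $0$, a cusp provides a rational point, so $X_0(n)\simeq \mathbb{P}^1_\Q$ and there are infinitely many non-isomorphic curves with a rational $n$-isogeny; this disposes of $n\in\{1,\dots,10,12,13,16,18,25\}$. Next, for those $n$ for which $X_0(n)$ has genus $1$, one computes the Mordell--Weil rank of $X_0(n)$ (the Jacobian being $X_0(n)$ itself) and, assuming rank zero, enumerates the finitely many rational points; this handles $n \in \{11,14,15,17,19,21,27,37,43,67,163\}$ in principle, giving only the exceptional $j$-invariants listed (several of which are CM).

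The truly substantial step is the prime case with $X_0(p)$ of genus $\geq 2$. Here the plan is to follow Mazur's method: form the Eisenstein quotient $\tilde J$ of $J_0(p)$, prove that $\tilde J(\Q)$ is finite (in fact, equal to the image of the cuspidal subgroup), and use a clever formal-immersion argument at a prime of good reduction to deduce that the Abel--Jacobi image of a hypothetical non-cuspidal rational point of $X_0(p)$ must coincide with that of a cusp, forcing the point itself to be a cusp. This handles all primes $p\geq 23$, $p\notin\{37,43,67,163\}$, with the CM exceptions arising at the listed sporadic primes. The composite case is then bootstrapped by Kenku: if $n$ has two distinct prime factors, an $n$-isogeny produces simultaneous isogenies of smaller coprime degrees, and by combining the prime-power analyses (using the $j$-line maps $X_0(n)\to X_0(p^k)$) one reduces to a finite check that rules out every $n$ outside the stated list.

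The main obstacle, by a very wide margin, is proving that the Eisenstein quotient has finite Mordell--Weil group, which is the heart of Mazur's theorem and requires a detailed study of the cuspidal and Shimura subgroups of $J_0(p)$ and their interaction modulo good primes. The rest of the argument, although technically substantial in the composite case, is largely a careful bookkeeping exercise once the prime case is in hand; identifying the six exceptional CM isogenies amounts to recognizing known CM $j$-invariants from the explicit defining equations of the corresponding $X_0(n)$.
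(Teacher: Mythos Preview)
The paper does not supply a proof of this statement at all: it is listed in Section~\ref{sec:auxiliary} (``Notation and auxiliary results'') as a known theorem, with full attribution to Mazur and Kenku, and is used as a black box throughout. So there is no ``paper's own proof'' to compare against.

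Your outline is a faithful high-level summary of the classical route taken in the cited references: reduce to rational points on $X_0(n)$, handle genus~$0$ trivially, handle genus~$1$ by computing Mordell--Weil groups, and for large primes invoke Mazur's Eisenstein-quotient/formal-immersion machinery, then bootstrap to composite $n$ via Kenku's case analysis. As a sketch this is correct in spirit, though of course each of the three substantive steps (finiteness of the Eisenstein quotient, the formal-immersion argument, and Kenku's composite-case bookkeeping) is a paper in itself, and your proposal does not contain the actual arguments --- it only names them. For the purposes of this paper that is exactly the right level of detail, since the authors themselves treat the result as input rather than as something to be reproved.
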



Mazur \cite{maz} determined $\Phi(1)$:
$$\Phi(1)=\left\{ \cC_n\,:\, n=1,\ldots, 10, 12\right\} \cup \left\{\cC_2 \times \cC_{2n}| n=1,\ldots, 4\right\}.$$

Kamienny \cite{kam} and Kenku and Momose \cite{km} determined $\Phi(2)$:
$$\Phi(2)=\left\{ \cC_n\,:\, n=1,\ldots, 16, 18\right\} \cup \left\{\cC_2 \times \cC_{2n}| n=1,\ldots, 6\right\}\cup \left\{\cC_3 \times \cC_{3n}| n=1, 2\right\} \cup \left\{\cC_4 \times \cC_4\right\}.$$

The second author \cite{naj} determined $\Phi_\Q(2):$
$$\Phi_\Q(2)=\left\{ \cC_n\,:\,n=1,\ldots, 10, 12, 15, 16\right\} \cup \left\{\cC_2 \times \cC_{2n}\,:\, n=1,\ldots, 6\right\}.$$

\section{Images of mod $n$ Galois representations}\label{sec:galrep}

When studying the growth of torsion groups upon extensions, one is naturally led to the study of mod $n$ Galois representations attached to elliptic curves. Let $E/K$ be an elliptic curve over a number field $K$ and $n$ a positive integer. We denote by $E[n]$ the $n$-torsion subgroup of $E(\overline K)$, where $\overline K$ is a fixed algebraic closure of $K$. That is, $E[n]=\{P\in E(\overline K)\,|\, [n]P=\mathcal O\}$, where $\mathcal O$ denotes the identity of the group $E(\overline K)$. The field $K(E[n])$ is the number field obtained by adjoining all the $x$ and $y$-coordinates of the points of $E[n]$, or, equivalently, the smallest field over which all of $E[n]$ is defined. The absolute Galois group $\Gal(\overline K/K)$ acts on $E[n]$ by its action on the coordinates of the points, inducing a \textit{mod $n$ Galois representation attached to $E$}
$$
\rho_{E,n}\,:\,\Gal(\overline K/K)\longrightarrow \Aut(E[n]).
$$
Notice that since $E[n]$ is a free $\Z/n\Z$-module of rank $2$, fixing a basis $\{P,Q\}$ of $E[n]$, we identify $ \Aut(E[n])$ with $\GL_2(\Z/n\Z)$. Then we rewrite the above Galois representation as
$$
\rho_{E,n}\,:\,\Gal(\overline K /K)\longrightarrow\GL_2(\Z/n\Z).
$$
Therefore we can view $\rho_{E,n}(\Gal(\overline K /K))$ as a subgroup of $\GL_2(\Z/n\Z)$, determined uniquely up to conjugacy in $\GL_2(\Z /n\Z)$, and denoted by $G_E(n)$ from now on; it will always be clear what the field $K$ is. In certain instances we will say that $G_E(n)=G$ for some group $G$; this will always mean that we have fixed a basis so that we remove ambiguity of working up to conjugacy.

Since $K(E[n])=\{x,y\,|\, (x,y)\in E[n]\}$ is Galois over $K$ and since $\ker{\rho_{E,n}}=\Gal(\overline K/K(E[n]))$, we deduce that $G_E(n)\simeq\Gal(K(E[n])/K)$. Let $R=(x(R),y(R))\in E[n]$ and $K(R)=K(x(R),y(R))\subseteq K(E[n])$. Then by Galois theory there exists a subgroup $\mathcal H_R$ of  $\Gal(K(E[n])/K)$ such that $K(R)=K(E[n])^{\mathcal H_R}$. In particular, if we denote by $H_R$ the image of $\mathcal H_R$ in $\GL_2(\Z/n\Z)$, we have:
\begin{itemize}
\item
$
[K(R):K]=[G_E(n):H_R],
$
\item $\Gal(\widehat{K(R)}/K)\simeq G_E(n)/\mathcal N_{G_E(n)}(H_R)$, where $\widehat{K(R)}$ denotes the Galois closure of $K(R)$ over $K$, and $\mathcal N_{G_E(n)}(H_R)$ denotes the normal core of $H_R$ in $G_E(n)$.
\end{itemize}

We will use the following obvious lemma, sometimes without mention.

\begin{lemma}\label{obvious}
Let $E/K$ be an elliptic curve, $n$ a positive integer and $R \in E[n]$. Then $[K(R) : K]$ divides $|G_E(n)|$. In particular $[K(R) : K]$ divides $|\GL_2(\Z/n\Z)|$.
\end{lemma}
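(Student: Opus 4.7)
The plan is to observe that essentially all the work has been done in the paragraph immediately preceding the lemma, and the statement reduces to two successive applications of Lagrange's theorem.

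First I would recall the setup: the field $K(R)$ is the fixed field of the subgroup $\mathcal H_R \leq \Gal(K(E[n])/K)$, and under the identification $\Gal(K(E[n])/K) \simeq G_E(n)$ via $\rho_{E,n}$, this subgroup corresponds to $H_R \leq G_E(n)$. The displayed equality $[K(R):K] = [G_E(n):H_R]$ stated before the lemma then gives the index as a quotient of orders, and since $H_R$ is a subgroup of the finite group $G_E(n)$, Lagrange's theorem immediately yields that $[G_E(n):H_R] = |G_E(n)|/|H_R|$ divides $|G_E(n)|$. This proves the first assertion.

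For the "in particular" clause, I would invoke that $G_E(n)$ is by construction a subgroup of $\GL_2(\Z/n\Z)$ (after fixing a basis of $E[n]$), so by Lagrange again $|G_E(n)|$ divides $|\GL_2(\Z/n\Z)|$. Transitivity of divisibility then gives $[K(R):K] \mid |\GL_2(\Z/n\Z)|$.

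There is no real obstacle here; the only thing to be careful about is to make sure the identifications $\Gal(K(E[n])/K) \simeq G_E(n)$ and $\mathcal H_R \leftrightarrow H_R$ are well-defined up to the choice of basis, but since we are only speaking of orders and indices, conjugacy-ambiguity in $\GL_2(\Z/n\Z)$ is harmless. In short, the lemma is truly an observation, justified solely by the Galois correspondence recorded above and Lagrange's theorem.
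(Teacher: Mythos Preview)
Your proposal is correct and is exactly the argument the paper has in mind: the authors explicitly call the lemma ``obvious'' and give no proof, relying on the identity $[K(R):K]=[G_E(n):H_R]$ recorded just above it together with Lagrange's theorem, precisely as you do.
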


In practice, given the conjugacy class of $G_E(n)$ in $\GL_2(\Z/n\Z)$, we can deduce the relevant arithmetic properties of the fields of definition of the $n$-torsion points: since $E[n]$ is a free $\Z/n\Z$-module of rank $2$, we can identify the $n$-torsion points with $(a,b)\in (\Z/n\Z)^2$ (i.e. if $R\in E[n]$ and $\{P,Q\}$ is a $\Z/n\Z$-basis of  $E[n]$, then there exist $a,b\in \Z/n\Z$ such that $R=aP+bQ$). Therefore $H_R$ is the stabilizer of $(a,b)$ by the action of $G_E(n)$ on $(\Z/n\Z)^2$.

\

Let $p$ be an odd prime and $\epsilon=-1$ if $p\equiv 3\!\pmod{4}$ and otherwise let $\epsilon \geq 2$ be the smallest integer such that $\legendre{\epsilon}{p}=-1$.

Define the following matrices in $\GL_2(\F_p)$, for some $a,b\in \F_p$:
$$D(a,b) =\begin{pmatrix}
a & 0 \\
0 & b
\end{pmatrix}, \quad M_{\epsilon}(a,b) =\begin{pmatrix}
a & b\epsilon \\
b & a
\end{pmatrix}, \quad
T=\begin{pmatrix}0 & 1 \\1 & 0 \end{pmatrix} \quad\mbox{and}\quad J=\begin{pmatrix}1 &0  \\0 & -1 \end{pmatrix}.$$

We define the following subgroups of $\GL_2(\F_p)$:

\begin{align*}
  C_s(p) &=\left\{D(a,b) : a,b\in\F_p^\times   \right\}, \\
  C^+_s(p) & =\left\{D(a,b) ,\, T\cdot D(a,b) : a,b\in\F_p^\times \right\}, \\
  C_{ns}(p) &=\left\{M_{\epsilon}(a,b):(a,b) \in \F_p^2,\,\,(a,b)\ne (0,0)\right\},\\
  C^+_{ns}(p) &=\left\{M_{\epsilon}(a,b),\, J\cdot M_{\epsilon}(a,b):(a,b) \in \F_p^2,\,\,(a,b)\ne (0,0)\right\}.
\end{align*}
The group $C_s(p)$ is called the \textit{split Cartan subgroup} and $C_{ns}(p)$ is called the \textit{non-split Cartan subgroup}. Note that $C^+_s(p)$ (resp. $C^+_{ns}(p)$) is the normalizer of $C_{s}(p)$ (resp. $C_{ns}(p)$) in $\GL_2(\F_p)$. The orders of the groups $C_s(p)$, $C^+_s(p)$, $C_{ns}(p)$ and $C^+_{ns}(p)$ are $(p-1)^2$, $2(p-1)^2$, $p^2-1$ and $2(p^2-1)$, respectively.

\

\noindent{\textit{Notation.} }
Conjugacy classes of subgroups of $\GL_2(\F_p)$, for fixed small values of $p$, will be in this paper identified by labels introduced by Sutherland \cite[\S 6.4]{Sutherland2} and used in the LMFDB database \cite{lmfdb}. We should note that Zywina \cite{zyw} uses different notation for such conjugacy classes of subgroups of $\GL_2(\F_p)$. To aid the reader, in Tables \ref{tableSutherland} and \ref{tableSutherland2} we list both Sutherland's and Zywina's names for each of the groups.

We turn our attention to what is known on the possible images of mod $p$ Galois representations attached to elliptic curves defined over $\Q$, where $p$ is a prime. Determining all the possibilities of the images $G_E(p)\subseteq \GL_2(\F_p)$ for elliptic curves defined over $\Q$ is an important (and still open) problem in the theory of elliptic curves. The following theorem, due to Mazur, Serre, Bilu, Parent, Rebolledo, Zywina, Balakrishnan, Dogra, M\"uller, Tuitman and Vonk (see \cite{curse, bilu, pbr, serre1, zyw} for more details) gives the current state of the knowledge for non-CM elliptic curves.
\begin{tm}\label{tm_nonCM}
Let $E/\Q$ be a non-CM elliptic curve and $p$ a prime. Then one the following possibilities occurs:
\begin{romanenum}
\item $G_E(p)=\GL_2(\F_p)$.
\item $p\in  \{2, 3, 5, 7, 11, 13, 17, 37\}$, and $G_E(p)$ is conjugate in $\GL_2(\F_p)$ to one of the groups in Tables \ref{tableSutherland} and \ref{tableSutherland2}.
\item $p\geq 17$:
\begin{alphenum}
\item
$p \equiv 1\! \pmod{3}$, and $G_E(p)$ is conjugate in $\GL_2(\F_p)$ to $C^+_{ns}(p)$.
\item
$p \equiv 2\! \pmod{3}$, and $G_E(p)$ is conjugate in $\GL_2(\F_p)$ to $C^+_{ns}(p)$ or to the subgroup
$$
G_0(p):=\left\{M_{\epsilon}(a,b)^3,\, J\cdot M_{\epsilon}(a,b)^3\,:\, (a,b) \in \F_p^2,\,\,(a,b)\ne (0,0)\right\}\subseteq C^+_{ns}(p).
$$
\end{alphenum}
\end{romanenum}
\end{tm}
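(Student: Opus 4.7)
The plan is to organize the classification around the standard Dickson-type theorem on subgroups of $\GL_2(\F_p)$: any proper subgroup not containing $\SL_2(\F_p)$ is, up to conjugacy, either contained in a Borel subgroup, contained in the normalizer of a split or non-split Cartan subgroup, or has projective image isomorphic to $A_4$, $S_4$, or $A_5$ (an \emph{exceptional} subgroup). The theorem then follows by asserting, case by case, which groups of each type actually occur as $G_E(p)$ for some non-CM $E/\Q$; throughout, the constraint $\det \rho_{E,p} = \chi_p$ (the surjective mod~$p$ cyclotomic character) is essential.

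First I would dispense with the Borel (reducible) case. If $G_E(p)$ lies in a Borel, then $E$ admits a $\Q$-rational $p$-isogeny, so Theorem~\ref{tm-isog} restricts $p$ to a short list; removing the values $\{19, 27, 43, 67, 163\}$, which force CM by Theorem~\ref{tm-isog}, leaves $p \in \{2,3,5,7,11,13,17,37\}$. For each such $p$, a direct matrix computation enumerates the conjugacy classes of Borel subgroups with surjective determinant, producing the Borel entries of Tables~\ref{tableSutherland} and~\ref{tableSutherland2}. The exceptional case is handled by Serre's analysis in~\cite{serre1}: projective images $A_4$, $S_4$, $A_5$ occur only for small primes, contributing the remaining nontrivial entries of the tables and the \texttt{13S4} case in part~(iii). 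For the normalizer of a split Cartan, I would invoke Bilu--Parent--Rebolledo~\cite{bilu, pbr}, who proved that $X^+_s(p)(\Q)$ consists only of cusps and CM points for all primes $p \geq 11$ with $p \neq 13$; this rules out the split Cartan normalizer for non-CM $E/\Q$ at every prime $p \geq 17$, and the case $p = 13$ is separated out explicitly in part~(iii).

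The remaining and most delicate case is the normalizer of a non-split Cartan, where I would follow Zywina's refined classification~\cite{zyw}. The quotient of $C^+_{ns}(p)$ by the center of $\GL_2(\F_p)$ is dihedral of order $2(p+1)$, so proper subgroups of $C^+_{ns}(p)$ containing $-I$ correspond to proper subgroups of this dihedral group. The determinant of $C_{ns}(p) \cong \F_{p^2}^\times$ is the norm, which is surjective onto $\F_p^\times$; by contrast, the determinant of the cubing subgroup $G_0(p)$ equals the cubes in $\F_p^\times$, which coincides with all of $\F_p^\times$ exactly when $\gcd(3, p-1) = 1$, that is, $p \equiv 2 \pmod 3$. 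This is also precisely the congruence for which $3 \mid (p+1)$, so $G_0(p)$ is an honest proper subgroup of $C^+_{ns}(p)$ in this case. A further case analysis on the possible intermediate subgroups, each of which either violates the determinant constraint or corresponds to CM points on a quotient of $X_{ns}(p)$, yields part~(iv) exactly as stated.

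The main obstacle is the non-split Cartan case: no uniform theorem comparable to Bilu--Parent--Rebolledo is known that would exclude it for large $p$, so one can only classify the possibilities rather than rule them out. It is precisely Zywina's explicit analysis, combined with the cyclotomic determinant constraint, that produces the clean dichotomy between $C^+_{ns}(p)$ and $G_0(p)$ in part~(iv); removing this conditionality is tied to Conjecture~\ref{serre_conj}.
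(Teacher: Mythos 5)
This theorem is stated in the paper without proof: it is explicitly credited to Mazur, Serre, Bilu, Parent, Rebolledo, and Zywina, with a pointer to \cite{serre1, bilu, pbr, zyw}, and the paper further notes that the $p\le 11$ part is Zywina's unconditional classification. There is therefore no "paper's own proof" to compare against; what you have done is reconstruct the outline of the argument from the cited literature, and that outline is accurate. You correctly organize the discussion around Dickson's trichotomy (Borel, normalizer of a split or non-split Cartan, exceptional projective image $A_4$, $S_4$, $A_5$) under the constraint $\det\rho_{E,p}=\chi_p$, you correctly dispatch the Borel case via the Mazur--Kenku isogeny theorem (Theorem~\ref{tm-isog}) after discarding the CM-forcing primes $19,43,67,163$ (note that $27$, which you also list, is simply not prime, so it plays no role here), you correctly invoke Serre for the boundedness of the exceptional case and Bilu--Parent--Rebolledo for the split Cartan normalizer at primes $p\ge 17$ (with $p=13$ deliberately left as the unresolved case in part~(iii)), and you correctly identify Zywina's determinant argument isolating $G_0(p)\subseteq C^+_{ns}(p)$ when $p\equiv 2\pmod 3$. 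One small gap worth acknowledging: your sketch treats the exceptional case as cleanly "contributing the remaining nontrivial entries of the tables," but for $p\le 11$ the tables also contain split and non-split Cartan normalizer subgroups and various Borel subgroups; obtaining the precise list for small $p$, as opposed to the coarse trichotomy, requires the fully explicit modular-curve computations carried out by Zywina and Sutherland, not merely Dickson plus the determinant constraint. That said, as a reconstruction of a theorem the paper itself only cites, your proposal is faithful to the structure of the actual proofs in the literature.
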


The complete and unconditional description of possible images $G_E(p)$ for $p\leq 11$ appearing in Theorem \ref{tm_nonCM}, which are compiled in Tables \ref{tableSutherland} and \ref{tableSutherland2} in the Appendix, is due to Zywina \cite{zyw} (see also Sutherland's paper \cite{Sutherland2}).

Before going any further we mention an important open conjecture, motivated by a question by Serre \cite{serre2} (see also \cite[Conjecture 1.12.]{zyw}), which conjecturally characterizes the possible images of Galois representations attached to non-CM elliptic curves.

\begin{conjecture}
If $E$ is a non-CM elliptic curve defined over $\Q$, $p\geq 17$ a prime and $(p, j_E)$ not in the set
\begin{equation}
\left\{(17,-17\cdot 373^3/2^{17}), (17,-17^2\cdot 101^3/2), (37,-7\cdot 11^3), (37, -7\cdot 137^3\cdot2083^3  \right \},
\end{equation}
then $G_E(p)=\GL_2(\F_p)$.
\label{serre_conj}
\end{conjecture}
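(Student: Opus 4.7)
The statement is Serre's uniformity conjecture (in a slightly refined form), which is a well-known open problem; nonetheless, I can describe the strategy that the known partial results follow, and the features that obstruct a complete proof. By Theorem \ref{tm_nonCM}, for any non-CM $E/\Q$ and any prime $p\geq 17$, the image $G_E(p)$ is either all of $\GL_2(\F_p)$, or is contained in $C^+_{ns}(p)$ (for all $p\geq 17$), or in $G_0(p)$ (for $p\equiv 2\!\pmod 3$), or in one of $C^+_s(13)$, $C^+_{ns}(13)$, \texttt{13S4} when $p=13$, or one of the groups of Tables \ref{tableSutherland}, \ref{tableSutherland2} when $p\in\{17,37\}$. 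Thus the plan is to show that, aside from the four exceptional pairs $(p,j_E)$ listed, none of these proper subgroups can actually arise as $G_E(p)$ for a non-CM $E/\Q$.

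For each candidate subgroup $G\subsetneq \GL_2(\F_p)$ one forms the modular curve $X_G/\Q$ parameterizing (up to twist) pairs $(E,\phi)$ where $\phi$ is a level structure compatible with $G$, and reduces the problem to describing $X_G(\Q)$: one must show this set consists only of cusps, CM points, and (for $p=17,37$) the points corresponding to the four exceptional $j$-invariants. For $p=17$ and $p=37$ the remaining subgroups from Tables \ref{tableSutherland}--\ref{tableSutherland2} not already handled by Zywina and Sutherland would be attacked individually: compute the genus, find a rational model, compute Mordell--Weil ranks of Jacobians of suitable quotients, and apply Chabauty--Coleman, Mordell--Weil sieve, or the quadratic Chabauty method (in the style of Balakrishnan--Dogra--M\"uller--Tuitman--Vonk for $X^+_{ns}(13)$) to enumerate the rational points. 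For $p\geq 17$ the core task is to show $X^+_{ns}(p)(\Q)$ consists of CM points only, and for $p\equiv 2\!\pmod 3$ that the analogous curve $X_{G_0(p)}$ contributes no new non-cuspidal non-CM rational points; here one would combine the runaway-method of Mazur (formal immersion at a well-chosen auxiliary prime) with the sharper modular-curve analysis of Bilu--Parent--Rebolledo for the $p$-split and split-Cartan cases, adapted to non-split Cartan.

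The main obstacle is exactly the one that prevents Serre's conjecture from being known: the genus of $X^+_{ns}(p)$ grows like $p^2/24$, and in the relevant range its Jacobian typically has Mordell--Weil rank matching or exceeding the genus, so classical Chabauty--Coleman fails, and the formal immersion arguments that succeed for the split-Cartan normalizer do not transfer cleanly to the non-split case (the cuspidal subscheme is much smaller, so one loses the rational base point needed to set up the immersion). A genuine proof would therefore require either a uniform non-abelian Chabauty argument, or a new Eisenstein-quotient-style result valid for $X^+_{ns}(p)$ for all $p\geq 17$; the individual cases $p=17,37$ are in principle within reach of current technology, but the uniform statement is not.
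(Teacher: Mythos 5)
You have correctly identified that this statement is Conjecture \ref{serre_conj}, i.e.\ Serre's uniformity conjecture in the refined form given by Zywina; the paper provides no proof of it and explicitly labels it as an open conjecture, invoked only conditionally (the paper repeatedly writes ``if Conjecture \ref{serre_conj} is true, then\ldots''). So there is nothing in the paper to compare your argument against, and the honest answer --- that this is not currently provable --- is the right one. Your survey of the strategy is accurate and well-informed: the reduction via Theorem \ref{tm_nonCM} to ruling out proper subgroups (chiefly subgroups of the non-split Cartan normalizer for $p\geq 17$), the translation into determining rational points on modular curves $X_G$, the role of Bilu--Parent--Rebolledo for the split case and quadratic Chabauty for small levels such as $X^+_{ns}(13)$, and the identified obstruction (growth of genus, lack of a rational cuspidal base point for a formal-immersion argument in the non-split case, and ranks that defeat classical Chabauty) all match the state of the art. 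One small imprecision: for $p\geq 17$ and $p\equiv 2\bmod 3$ the remaining subgroup $G_0(p)$ is an index-$3$ subgroup of $C^+_{ns}(p)$, so the relevant curve is a degree-$3$ cover of $X^+_{ns}(p)$, not a curve ``contributing'' separately --- ruling out $C^+_{ns}(p)$ would automatically rule out $G_0(p)$. This does not change your conclusion.
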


\begin{remark}
If Conjecture \ref{serre_conj} is true, the case (iii) does not occur. All of the other case are known to occur.
\end{remark}

Some of our results will be best possible under the following Conjecture, which is a weaker version of \ref{serre_conj}.

\begin{conjecture}
\label{uniformity_conj}
If $E$ is a non-CM elliptic curve defined over $\Q$, $p\geq 17$ a prime and $(p, j_E)$ not in the set
\begin{equation}
\left\{(17,-17\cdot 373^3/2^{17}), (17,-17^2\cdot 101^3/2), (37,-7\cdot 11^3), (37, -7\cdot 137^3\cdot2083^3  \right \},
\end{equation}
then $G_E(p)=\GL_2(\F_p)$ or $C^+_{ns}(p)$.
\end{conjecture}

The difference between Conjectures \ref{serre_conj} and \ref{uniformity_conj} is that in the latter we allow that $G_E(p)= C^+_{ns}(p)$. For our purposes, there is no difference between $C^+_{ns}(p)$ and $\GL_2(\F_p)$, as both act transitively on the nonzero elements of $\F_p^2$.

If $E/\Q$ is a CM elliptic curve and $p$ a prime, the theory of complex multiplication gives us a lot of information about $G_E(p)$. In the CM case, the possibilities for $G_E(p)$ are completely understood. We list the possibilities for $G_E(p)$ (in the form stated in \cite[\S 1.9]{zyw}):

\begin{tm} \label{tm_CM}
Let $E/\Q$ be a CM elliptic curve and $p$ a prime.  The ring of endomorphisms of $E_{\Qbar}$ is an order of conductor $f$ in the ring of integers of an imaginary quadratic field of discriminant $-D$.
\begin{romanenum}
\item If $p=2$ then $G_E(2)=\GL_2(\F_2)$ or is conjugate in $\GL_2(\F_2)$ to \texttt{2B} or \texttt{2Cs}.
\item If $p>2$ and $(D,f)\ne (3,1)$:
\begin{alphenum}
\item If $\legendre{-D}{p}=1$, then $G_E(p)$ is conjugate in $\GL_2(\F_p)$ to $C^+_s(p)$.
\item If $\legendre{-D}{p}=-1$, then $G_E(p)$ is conjugate in $\GL_2(\F_p)$  to $C^+_{ns}(p)$.
\item Suppose that $p$ divides $D$ and hence $D=p$. Then $G_E(p)$ is conjugate in $\GL_2(\F_p)$  to one of the following subgroups
$$
G_{00}(p):=\left\{ \begin{pmatrix} a & b \\0 & \pm a \end{pmatrix} : a\in \F_p^\times, b\in \F_p\right\},
$$
$$
G_{10}(p): = \left\{ \begin{pmatrix} a & b \\0 & \pm a \end{pmatrix} : a\in (\F_p^\times)^2, b\in \F_p\right\}, \quad \text{ or }\quad G_{01}(p):= \left\{ \begin{pmatrix} \pm a & b \\0 &  a \end{pmatrix} : a\in (\F_p^\times)^2, b\in \F_p\right\}.
$$
\end{alphenum}
\item If $p>2$ and $(D,f)=(3,1)$:
\begin{alphenum}
\item  If $p \equiv 1 \!\pmod{9}$, then $G_E(p)$ is conjugate in $\GL_2(\F_p)$ to $C^+_{s}(p)$ .
\item If $p \equiv 8 \!\pmod{9}$, then $G_E(p)$ is conjugate in $\GL_2(\F_p)$ to $C^+_{ns}(p)$ .
\item If $p \equiv 4$ or $7 \!\pmod{9}$, then $G_E(p)$ is conjugate in $\GL_2(\F_p)$ to $C^+_{s}(p)$ or to the subgroup
$$
G^3(p):= \left\{D(a,ab^3) ,\, T\cdot D(a,ab^3) : a,b\in\F_p^\times \right\} \subseteq C^+_{s}(p).
$$
\item If $p \equiv 2$ or $5 \!\pmod{9}$, then $G_E(p)$ is conjugate in $\GL_2(\F_p)$ to $C^+_{ns}(p)$ or to the subgroup $G_0(p)$ of $C^+_{ns}(p)$.
\item  If $p=3$, then $G_E(3)$ is conjugate in $\GL_2(\F_3)$ to \texttt{3Cs.1.1}, \texttt{3Cs}, \texttt{3B.1.1}, \texttt{3B.1.2} or \texttt{3B}.
\end{alphenum}
\end{romanenum}
All the cases occur.
\end{tm}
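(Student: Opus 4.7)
The plan is to follow the standard approach via complex multiplication theory, working systematically through the decomposition of $p$ in the endomorphism ring. The starting observation is that because $E/\Q$ has CM by an order $\mathcal O$ of conductor $f$ in $\mathcal O_K$ with $K=\Q(\sqrt{-D})$, the ring $\mathcal O$ acts on $E[p]$ via endomorphisms defined over the Hilbert class field of $K$; reducing modulo $p$ gives an embedding of $\mathcal O/p\mathcal O$ into $\End(E[p])$ compatible with the Galois action. Hence $\rho_{E,p}(\Gal(\Qbar/K))$ lies inside the image of $(\mathcal O/p\mathcal O)^\times$ in $\GL_2(\F_p)$, and since $E$ is defined over $\Q$ (not merely $K$), complex conjugation contributes an extra element normalizing this commutative subgroup. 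This forces $G_E(p)$ into the normalizer of the image of $(\mathcal O/p\mathcal O)^\times$.

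The trichotomy (a)/(b)/(c) of case (ii) then falls out of how $p$ decomposes in $\mathcal O$. If $\legendre{-D}{p}=1$, then $p$ splits and $\mathcal O/p\mathcal O\cong \F_p\times\F_p$, embedding as the split Cartan $C_s(p)$; complex conjugation swaps the two factors, giving the element $T$ and putting $G_E(p)$ in $C_s^+(p)$. If $\legendre{-D}{p}=-1$, then $p$ is inert and $\mathcal O/p\mathcal O\cong\F_{p^2}$, embedding as $C_{ns}(p)$, and the conjugation element gives $J\cdot M_\epsilon(a,b)$, landing in $C_{ns}^+(p)$. When $p\mid D$ (so $p=D$ in our setting), the ring $\mathcal O/p\mathcal O\cong \F_p[\epsilon]/(\epsilon^2)$ is non-reduced, the image sits in a Borel; the three possibilities $G_{00}, G_{10}, G_{01}$ correspond to $E$ and its two quadratic twists by the Dirichlet character of conductor $p$ associated to $K$. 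One must check equality (not just containment) in each case; this is done by showing that the index of $G_E(p)$ in the Cartan normalizer divides the degree of the appropriate class field extension, which for generic $(D,f)\neq(3,1)$ is $1$.

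The most delicate part, and the main obstacle, is case (iii) with $(D,f)=(3,1)$, where $\mathcal O^\times=\mu_6$ has order $6$ rather than $2$. The extra units give a cubic character: decomposing $(\mathcal O/p\mathcal O)^\times$ by the action of cube roots of unity produces an index-$3$ subgroup of the Cartan normalizer, namely $G^3(p)$ in the split case and $G_0(p)$ in the non-split case. Whether $G_E(p)$ is the full Cartan normalizer or drops into this index-$3$ subgroup is controlled by the image of a Frobenius at $p$ in the ray class field of $K=\Q(\zeta_3)$ of conductor $3$, which by class field theory reduces to the congruence class of $p$ modulo $9$. This yields the four subcases (a)–(d): when $p\equiv 1\pmod 9$ or $p\equiv 8\pmod 9$ the Frobenius lies fully in the image of $\mu_6$ and one recovers the full Cartan normalizer, whereas for $p\equiv 4,7\pmod 9$ (resp.\ $2,5\pmod 9$) the image can either be the full $C_s^+(p)$ (resp.\ $C_{ns}^+(p)$) or the index-$3$ subgroup $G^3(p)$ (resp.\ $G_0(p)$), depending on the specific twist of the $j=0$ curve.

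Finally, case (i) and subcase (iii)(e) are small: for $p=2$, direct computation on the explicit list of $j$-invariants of CM elliptic curves over $\Q$ yields the three possibilities $\GL_2(\F_2)$, \texttt{2B}, \texttt{2Cs}; for $p=3$ in the $(D,f)=(3,1)$ case, one computes $G_E(3)$ on the five twists of the curve with $j=0$. Showing all listed cases actually occur is done by exhibiting, for each of the thirteen CM $j$-invariants over $\Q$ and a suitable twist, an elliptic curve realizing the given image; this is a finite computation in \texttt{Magma}. The crux of the argument, and the place where the most care is required, is the cubic twist analysis in case (iii): one must carefully identify the cubic character from $\mu_6\subset \mathcal O^\times$ with a Dirichlet character of conductor $9$, and verify via class field theory that exactly the residues $p\bmod 9$ listed control the dichotomy between full Cartan normalizer and index-$3$ subgroup.
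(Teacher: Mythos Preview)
The paper does not actually prove this theorem. It is stated there as a known result, attributed to Zywina \cite{zyw}: the sentence immediately preceding the theorem reads ``We list the possibilities for $G_E(p)$ (in the form stated in \cite{zyw}).'' So there is no proof in the paper to compare against.

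Your outline is a reasonable sketch of the standard CM-theoretic argument and is broadly how the result is established in the literature. A couple of points where your write-up is loose: in case (ii)(c) the three groups $G_{00}(p)$, $G_{10}(p)$, $G_{01}(p)$ are not simply ``$E$ and its two quadratic twists''---$G_{00}(p)$ has order $2p(p-1)$ while $G_{10}(p)$ and $G_{01}(p)$ are its two index-$2$ subgroups, so the correspondence with twists is slightly more delicate. Also, your explanation of why $(D,f)=(4,1)$ (i.e.\ $j=1728$, with $\mathcal O^\times=\mu_4$) does \emph{not} require a separate case deserves a word: the extra units there give only a quadratic character, which is absorbed into the index-$2$ extension from Cartan to Cartan normalizer, whereas for $(D,f)=(3,1)$ the cubic character from $\mu_6/\mu_2$ genuinely cuts out new index-$3$ subgroups. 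Finally, your claim that the mod-$9$ congruence controls the dichotomy in case (iii) is correct, but the mechanism is more direct than invoking ray class fields: in the split case one simply asks whether every element of $\F_p^\times$ is a cube (equivalent to $p\equiv 1\pmod 9$ given $p\equiv 1\pmod 3$), and analogously in the non-split case whether $3\mid \tfrac{p^2-1}{3}$.
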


\section{Growth of torsion in extensions}
\label{sec2}

In this section we prove general results, without any constraints on the base field over which the elliptic curve is defined, concerning the growth of the torsion of elliptic curves upon base change. We believe the results are primarily interesting in their own right, but they will also be useful in Section \ref{sec_quartic}.

Our first result gives restrictions on the growth of the $p$-torsion, in terms of the Galois group of the extension.

\begin{tm}
\label{maintm}
Let $L/F$ be a finite extension of number fields, $\widehat{L}$ the normal closure of $L$ over $F$, $G=\Gal(\widehat{L}/F)$, and suppose that  $H=\Gal(\widehat{L}/L)$ is a non-normal maximal subgroup of $G$. Let $p$ be a prime, $a=[F(\zeta_p): F]$,  and suppose $G$ does not contain a cyclic quotient group of order $a$. Then for every elliptic curve $E/F$, it holds that $E(L)[p]= E(F)[p]$.
\end{tm}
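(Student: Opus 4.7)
The plan is to argue by contradiction, so suppose there is a point $P \in E(L)[p] \setminus E(F)[p]$. The pointwise stabilizer $\mathrm{Stab}_G(P) = \Gal(\widehat{L}/F(P))$ lies between $H$ and $G$, since $F \subseteq F(P) \subseteq L$, and by maximality of $H$ it must equal $H$ or $G$; the option $G$ would give $F(P) = F$, i.e.\ $P \in E(F)$, contradicting the choice of $P$. Therefore $\mathrm{Stab}_G(P) = H$ and $F(P) = L$.

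Next I would look at the setwise stabilizer $H'$ of the line $\langle P \rangle \subseteq E[p]$ inside $G$. Since elements of $H$ fix $P$ pointwise they preserve $\langle P \rangle$, so $H \leq H' \leq G$, and the same maximality dichotomy gives $H' = H$ or $H' = G$; these are the two cases of the argument.

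In the first case $H' = G$, the group $G$ acts on $\langle P \rangle \cong \F_p$ by scalar multiplication, i.e.\ via a character $\chi : G \to \F_p^{\times}$. Its kernel is normal in $G$ and contains $H$, so maximality forces $\ker\chi \in \{H,G\}$; the option $\ker\chi = H$ would make $H$ normal, contradicting the hypothesis, while $\ker\chi = G$ would make $\chi$ trivial, so $G$ fixes $P$ and $P \in E(F)$, again a contradiction. In the second case $H' = H$, any $\sigma \in G \setminus H$ must move $\langle P \rangle$ to a different line in $E[p]$, so $\{P,\sigma(P)\}$ is an $\F_p$-basis of $E[p]$; since $\widehat{L}/F$ is Galois and $P \in E(L)$, the conjugate $\sigma(P)$ lies in $E(\widehat{L})$, and hence $F(E[p]) = F(P,\sigma(P)) \subseteq \widehat{L}$. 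The Weil pairing then forces $F(\zeta_p) \subseteq F(E[p]) \subseteq \widehat{L}$, so $\Gal(F(\zeta_p)/F)$, a cyclic group of order $a$, is a quotient of $G$, contradicting the last hypothesis.

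The main obstacle is really just the bookkeeping of which subgroup is which; once that is settled, the argument consists of two clean applications of the maximality dichotomy (to $\mathrm{Stab}_G(P)$, and then to the stabilizer of $\langle P \rangle$), together with the standard fact from the Weil pairing that any extension of $F$ containing a full $\F_p$-basis of $E[p]$ already contains $\zeta_p$. No numerical input about $L$, $F$ or $p$ beyond the stated group-theoretic hypothesis on $G$ is needed.
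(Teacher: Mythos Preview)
Your proof is correct and follows the same overall strategy as the paper: show that a non-$F$-rational $p$-torsion point $P$ together with a suitable Galois conjugate $\sigma(P)$ span $E[p]$, so that $E[p]\subseteq E(\widehat{L})$, and then use the Weil pairing to force $\zeta_p\in\widehat{L}$ and hence a cyclic quotient of $G$ of order $a$. The only organizational difference is that the paper uses the non-normality of $H$ up front to pick $\sigma$ with $\sigma(L)\neq L$ and argues directly that $P$ and $P^\sigma$ are independent (since $F(P^\sigma)=L^\sigma\neq L=F(P)$), whereas you insert an explicit case split on the setwise stabilizer of $\langle P\rangle$; your Case $H'=G$ is exactly the scenario that the paper's choice of $\sigma$ rules out in one stroke.
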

\begin{proof}
Suppose that $P\in E(L)$, $P \notin E(F)$ is a point of order $p$. Then, as $H$ is not normal in $G$ there exists a $\sigma \in G$ such that $\sigma(L) \neq L$. It follows that $P^\sigma \in (E(L))^\sigma=E(\sigma(L))$. Note that as $H$ is not contained in any proper subgroup of $G$, it follows that $L$ has no proper subfield containing $F$, so $L\cap L^\sigma=F$. As $P$ and $P^\sigma$ are defined over different fields, it follows that they are independent, i.e. $\langle P,P^\sigma\rangle=E[p]$. Obviously, $P,P^\sigma\in E(\widehat{L})$, so $E(\widehat{L})[p]\simeq \cC_p\times \cC_p$. But this is impossible, since $\widehat{L}$ does not contain $\zeta_p$, because $F(\zeta_p)$ would be a cyclic extension of $F$ of degree $a$, contradicting our assumptions.

\end{proof}

By specializing the above theorem, we immediately obtain the following corollary.

\begin{corollary}
\label{cor_p}
Let $E/ \Q$ be an elliptic curve and $L$ be a number field with Galois closure $\widehat{L}$ over $\Q$ and with $G:=\Gal(\widehat{L}/\Q)$. Suppose there are no intermediate fields $L\supseteq M\supseteq \Q$. Then:
\begin{romanenum}

  \item If $G$ is a simple group, and $p\geq 3$ be a prime, then $E(L)[p]=E(\Q)[p]$.

  \item If $G\simeq A_n$ for $n\geq 4$, and $p\geq 3$ be a prime, then $E(L)[p]=E(\Q)[p]$.

   \item If $G\simeq S_n$ for $n\geq 4$, and $p\geq 5$ be a prime, then $E(L)[p]=E(\Q)[p]$.

\end{romanenum}

\end{corollary}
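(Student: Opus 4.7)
The plan is to derive each of (i)--(iii) by direct application of Theorem \ref{maintm} with $F=\Q$. The assumption that no intermediate fields $L\supseteq M\supseteq \Q$ exist translates exactly into $H=\Gal(\widehat L/L)$ being a maximal subgroup of $G$. Once maximality is in hand, the two remaining hypotheses of Theorem \ref{maintm} to verify are that $H$ is non-normal in $G$ and that $G$ admits no cyclic quotient of order $a=[\Q(\zeta_p):\Q]=p-1$.

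For the non-normality of $H$: in each of the three cases, if $H$ were normal in $G$ then $L$ itself would be Galois over $\Q$, forcing $\widehat L=L$ and $G\simeq G/H$ to be a proper quotient of itself (the trivial case $H=G$, i.e.\ $L=\Q$, is excluded). In (i), a non-abelian simple group has no proper non-trivial normal subgroup, so any maximal $H\subsetneq G$ is automatically non-normal. In (ii), for $n\geq 5$ the group $A_n$ is non-abelian simple; for $n=4$ the unique normal maximal subgroup of $A_4$ is the Klein four group $V_4$, but $A_4/V_4\simeq \cC_3\not\simeq A_4$, so $H$ cannot be $V_4$, and therefore $H$ must be one of the (non-normal) Sylow $3$-subgroups. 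In (iii), the proper non-trivial normal subgroups of $S_n$ are $A_n$ and, if $n=4$, also $V_4$; the quotients $S_n/A_n\simeq \cC_2$ and $S_4/V_4\simeq S_3$ are not isomorphic to $S_n$, so these cannot occur as $H$ either.

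For the cyclic-quotient condition, I would invoke the following short lists of cyclic quotients: only the trivial group for a non-abelian simple $G$; only the trivial group for $A_n$ with $n\geq 5$, and $\{1\},\cC_3$ for $A_4$; and $\{1\},\cC_2$ for $S_n$ with $n\geq 4$. With $a=p-1$, the hypothesis $p\geq 3$ in (i)--(ii) guarantees $a\geq 2$ while ruling out $a=3$ (which would require $p=4$, not prime), and the hypothesis $p\geq 5$ in (iii) gives $a\geq 4$, avoiding both $1$ and $2$. Hence Theorem \ref{maintm} applies and yields $E(L)[p]=E(\Q)[p]$ in every case.

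The only real obstacle is the group-theoretic bookkeeping for the borderline groups $A_4$ and $S_4$: one must rule out each normal maximal subgroup as a candidate for $H$ (by arguing that its appearance would shrink the Galois closure $\widehat L$, contradicting the stipulated $G$), and simultaneously verify that no ``forbidden'' cyclic quotient order matches $p-1$ under the stated lower bound on $p$. Both checks reduce to inspecting the normal subgroup lattice of a small finite group; no deeper input beyond Theorem \ref{maintm} is required.
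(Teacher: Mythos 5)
Your proposal is correct and takes essentially the same route as the paper: in each case one applies Theorem \ref{maintm} with $F=\Q$ after checking that $H=\Gal(\widehat L/L)$ is a non-normal maximal subgroup and that $G$ has no cyclic quotient of order $a=p-1$, which is exactly what the paper does (it just leaves the $A_4$ and $S_4$ checks implicit). Your more detailed bookkeeping of the normal-subgroup lattices of $A_4$ and $S_4$ and of the list of cyclic quotient orders fills in what the authors summarize as ``easy to check,'' and it is accurate throughout.
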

\begin{proof}
Statement (i) obviously follows from Theorem \ref{maintm}, and statement (ii) follows from the fact that $A_n$ is simple for $n\geq 5$. It is easy to check that the conditions of Theorem \ref{maintm} are satisfied for $A_4$.

Statement (iii) follows for $n\geq 5$, from the fact that only proper normal subgroup of $S_n$ is $A_n$. For $n=4$, it is again easy to check that there are no cyclic quotients of order $\geq 4$ of $S_4.$
\end{proof}

We give another result that is useful for proving that over an extension with a certain Galois group there cannot be torsion growth of certain type.

\begin{tm}
\label{tmcrit2}
Let $E/F$ be an elliptic curve, $L/F$ be a finite extension of number fields with no intermediate fields, and let $G=\Gal(\widehat{L}/F)$, where $\widehat{L}$ is the normal closure of $L$ over $F$. If $G$ is not isomorphic to a quotient of $\Gal(F(E[p])/F)$, then $E(L)[p]= E(F)[p]$.
\end{tm}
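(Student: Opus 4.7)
The plan is to argue by contradiction: assume $E(L)[p] \neq E(F)[p]$ and produce a surjection from $\Gal(F(E[p])/F)$ onto $G$, contradicting the hypothesis that $G$ is not a quotient of $\Gal(F(E[p])/F)$.

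First I would pick a point $P \in E(L)[p]\setminus E(F)[p]$ and consider the intermediate field $F(P)$. Since $P \in E(L)$ we have $F \subseteq F(P) \subseteq L$, and since $P \notin E(F)$ we have $F \subsetneq F(P)$. By the assumption that $L/F$ admits no proper intermediate fields, this forces $F(P)=L$. Consequently the normal closure $\widehat{L}$ is the compositum over $F$ of the Galois conjugates of $P$.

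Next I would observe that every Galois conjugate of $P$ under $\Gal(\overline F/F)$ is again a point of order $p$ on $E$, hence lies in $E[p]$ and therefore in $F(E[p])$. Thus the compositum of these conjugates, which is $\widehat{L}$, is contained in $F(E[p])$. Since $F(E[p])/F$ is Galois and $\widehat{L}/F$ is Galois, restriction furnishes a surjective homomorphism
\[
\Gal(F(E[p])/F) \twoheadrightarrow \Gal(\widehat{L}/F) = G,
\]
so $G$ is a quotient of $\Gal(F(E[p])/F)$. This contradicts the hypothesis, completing the proof.

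There is essentially no obstacle here; the argument is a direct linkage between the no-intermediate-fields property (which allows us to identify $F(P)$ with $L$) and the basic fact that the coordinates of a $p$-torsion point live in $F(E[p])$. The only small point requiring care is justifying $F(P)=L$, which is exactly where the hypothesis on intermediate fields is used; the rest is formal Galois theory.
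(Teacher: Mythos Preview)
Your proof is correct and follows essentially the same route as the paper's: both argue by contradiction, use the absence of intermediate fields to force $F(P)=L$ (hence $L\subseteq F(E[p])$), and then pass to the normal closure inside the Galois extension $F(E[p])/F$ to obtain the forbidden surjection onto $G$. The only cosmetic difference is that the paper invokes directly that $\widehat{L}$ is contained in any Galois extension of $F$ containing $L$, whereas you describe $\widehat{L}$ explicitly as generated by the conjugates of $P$; the content is identical.
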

\begin{proof}
Suppose the opposite, that $E(F)[p]\subsetneq E(L)[p]$. It follows, since $L\supseteq F$ has no intermediate fields, that $L\subseteq F(E[p])$. Since $F(E[p])$ is a Galois extension of $F$, it follows that $\widehat{L}\subseteq F(E[p])$, since $\widehat{L}$ is contained in any Galois extension of $F$ which contains $L$. It follows that $G$ is a quotient of $\Gal(F(E[p])/F)$.
\end{proof}

\begin{remark}
If $\Gal(F(E[p])/F)$ is a priori unknown, one can still use the fact that $\Gal(F(E[p])/F)$ is always a subgroup of $\GL_2(\F_p)$. Then, in the setting as above, if $G$ is not a quotient of any subgroup of $\GL_2(\F_p)$, then $E(L)[p]= E(F)[p]$.
\end{remark}

The previous two theorems give constraints on the growth of $E[p]$. One is naturally led to consider whether there can be any $p^{n+1}$ torsion over an extension $L$ of $F$ if $E(F)[p]\neq \{\mathcal O\}$. The following result will be useful.

\begin{tm}\label{tm5}
Let $L/F$ be a finite extension of number fields, $G=\Gal(\widehat{L}/F)$, where $\widehat{L}$ is the normal closure of $L$ over $F$, $n$ be a positive integer and let $p$ be a prime co-prime to $[L:F]$. Suppose that $G$ is not isomorphic to a quotient of any subgroup of $\GL_2(\Z/p^n\Z)$ and that $\Gal(\widehat{L}/L)$ is maximal in $G$. Let $E/F$ be an elliptic curve such that it has a $F$-rational point of order $p^n$, but no $F$-rational points of order $p^{n+1}$. Then $E(L)$ has no points of order $p^{n+1}$. \end{tm}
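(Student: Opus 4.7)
My approach is by contradiction: assume that some $Q\in E(L)$ has order $p^{n+1}$, and deduce from this that $G$ is isomorphic to a quotient of a subgroup of $\GL_2(\Z/p^n\Z)$, contradicting the hypothesis.

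The first step is to pin down the field $F(Q)$ using the maximality of $H:=\Gal(\widehat L/L)$ in $G$. By the Galois correspondence for $\widehat L/F$, the intermediate fields between $F$ and $L$ correspond to subgroups of $G$ containing $H$; maximality leaves only $F$ and $L$ themselves. Since $F(Q)\subseteq L$, we have either $F(Q)=F$ or $F(Q)=L$. The first case would force $Q\in E(F)$, contradicting the hypothesis that $E$ has no $F$-rational point of order $p^{n+1}$; hence $F(Q)=L$.

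The heart of the argument is then to push $L$ down from the $p^{n+1}$-torsion field $K_{n+1}:=F(E[p^{n+1}])$ to the smaller $p^n$-torsion field $K_n:=F(E[p^n])$, exploiting the coprimality of $[L:F]$ and $p$. Since $Q\in E[p^{n+1}]$, we have $L=F(Q)\subseteq K_{n+1}$. The kernel of the natural reduction map $\GL_2(\Z/p^{n+1}\Z)\to\GL_2(\Z/p^n\Z)$ consists of matrices of the form $I+p^n M$ with $M\in \mathrm{Mat}_2(\F_p)$, hence has order $p^4$, and so $[K_{n+1}:K_n]$ is a power of $p$. Since $K_n/F$ is Galois, $[LK_n:K_n]=[L:L\cap K_n]$ divides $[L:F]$ and is therefore coprime to $p$; on the other hand $[LK_n:K_n]$ divides $[K_{n+1}:K_n]$, which is a power of $p$. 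Consequently $[LK_n:K_n]=1$, i.e.\ $L\subseteq K_n$.

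Finally, because $K_n/F$ is Galois and contains $L$, it also contains the normal closure $\widehat L$. Therefore $G=\Gal(\widehat L/F)$ is a quotient of $\Gal(K_n/F)$, and the latter embeds in $\GL_2(\Z/p^n\Z)$ via the mod $p^n$ Galois representation attached to $E$. This contradicts the assumption that $G$ is not a quotient of any subgroup of $\GL_2(\Z/p^n\Z)$, completing the proof. The only substantive ingredient is the coprime-degree step that descends $L$ from $K_{n+1}$ into $K_n$; the maximality of $H$ plays only the mild role of forcing $F(Q)=L$.
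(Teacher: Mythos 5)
Your proof is correct and uses the same ingredients as the paper's — maximality of $\Gal(\widehat L/L)$ to force $F(Q)=L$, the fact that $[F(E[p^{n+1}]):F(E[p^n])]$ is a $p$-power, and the coprimality of $[L:F]$ with $p$ — but you arrange them in the opposite order, and your arrangement is cleaner. The paper first invokes the $\GL_2(\Z/p^n\Z)$ hypothesis to rule out $L\subseteq F(E[p^n])$, and then derives a contradiction purely from degree counting (citing along the way that ``$[L:F]$ is coprime to $[F(E[p^n]):F]$,'' a claim that isn't literally justified by the hypotheses — what is actually true, and what the paper needs, is that $L\cap F(E[p^n])=F$ because $L/F$ has no proper intermediate fields). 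You instead run the degree argument first: $[LF(E[p^n]):F(E[p^n])]=[L:L\cap F(E[p^n])]$ divides $[L:F]$, hence is prime to $p$, yet divides the $p$-power $[F(E[p^{n+1}]):F(E[p^n])]$, forcing $L\subseteq F(E[p^n])$; only then do you invoke the $\GL_2(\Z/p^n\Z)$ hypothesis, correctly, via $\widehat L\subseteq F(E[p^n])$ making $G$ a quotient of the subgroup $\Gal(F(E[p^n])/F)\le \GL_2(\Z/p^n\Z)$. This single pass avoids the case split and sidesteps the unjustified coprimality remark in the original.
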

\begin{proof}
Suppose that there exist a point $P$ of order $p^{n+1}$ in $E(L)$. Since $\Gal(\widehat{L}/L)$ is maximal in $G$, it follows that
 $L\supseteq F$ has no intermediate fields, so $L$ is the field of definition of $P$, i.e. $L=F(P)$.

If $L\subseteq F(E[p^n])$ was true, then $\Gal(F(E[p^n])/L)$ would be a quotient of  $G_E(p^n)\simeq \Gal(F(E[p^n])/F)$, which is in turn a subgroup of $\GL_2(\Z /p^n \Z)$, contradicting our assumptions. We conclude that $L\nsubseteq F(E[p^n])$.

Note that since $G_E(p^{n+1})$ is a subgroup of the inverse image of reduction mod $p^n$ to $G_E(p^{n})$, it follows that $[F(E[p^{n+1}]): F(E[p^{n}])]$ divides $p^4$. Since $P\in F(E[p^{n+1}])$ and $P\notin F(E[p^{n}])$, it follows that $F(E[p^{n}]) \subsetneq F(E[p^{n+1}])$ We conclude that $[F(E[p^{n+1}]): F(E[p^{n}])]=p^k$, where $k=1,2,3$ or $4$.

Now since both $L=F(P)$ and $F(E[p^{n}])$ are contained in $F(E[p^{n+1}])$, it follows that
$$F(E[p^{n}])\subseteq LF(E[p^{n}]) \subseteq F(E[p^{n+1}]).$$
Since $L\nsubseteq F(E[p^n])$, we conclude that $[LF(E[p^{n}]): F(E[p^{n}])]$ divides $[F(E[p^{n+1}]): F(E[p^{n}])]$ and is hence equal to $p^s$ where $s=1,2,3$ or $4$. But since $[L:F]$ is coprime to $[F(E[p^{n}]):F]$, we have
$$[LF(E[p^{n}]): F(E[p^{n}])][F(E[p^{n}]):F]=[LF(E[p^{n}]): F]=[L:F][F(E[p^{n}]):F],$$
so $[L:F]=[LF(E[p^{n}]): F(E[p^{n}])]$, which obviously cannot be true since the left and right hand side are coprime and $>1$.
\end{proof}

\begin{proposition}
\label{prop_pdiv}
Let $E/F$ be an elliptic curve over a number field $F$, $n$ a positive integer, $P \in E(\overline F)$ be a point of order $p^{n+1}$. Then $[F(P):F(pP)]$ divides $p^2$ or $(p-1)p$.
\end{proposition}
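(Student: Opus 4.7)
The plan is to route the computation of $[F(P):F(Q)]$, with $Q:=pP$, through the Galois closure $\widetilde K:=K(P)$ of $F(P)$ over $L:=F(Q)$, where $K:=L(E[p])$. Note $L(P)=F(P)$ since $Q\in F(P)$, and $\widetilde K/L$ is Galois because every $\Gal(\overline F/L)$-conjugate of $P$ is a preimage of $Q$ under $[p]$, and hence of the form $P+R$ with $R\in E[p]\subseteq K$.

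The key observation that accounts for the asymmetric bound ``$p^2$ or $p(p-1)$'' is a constraint on $\Gal(K/L)$. Since $pP\in L$, the $p$-torsion point $p^nP=p^{n-1}(pP)$ is fixed by $\Gal(\overline F/L)$. In a basis of $E[p]$ whose first vector is $p^nP$, the injection $\Gal(K/L)\hookrightarrow\GL_2(\F_p)$ thus lands in the order-$p(p-1)$ subgroup of matrices of the form $\bigl(\begin{smallmatrix}1&*\\0&*\end{smallmatrix}\bigr)$; in particular $|\Gal(K/L)|$ divides $p(p-1)$.

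The central bookkeeping device is the map $\phi\colon \Gal(\widetilde K/L)\to E[p]$, $\phi(\sigma)=\sigma(P)-P$ (lying in $E[p]$ since $\sigma$ fixes $pP$), which is a cocycle: $\phi(\sigma\tau)=\phi(\sigma)+\bar\sigma\phi(\tau)$, with $\bar\sigma$ the image in $\Gal(K/L)$. The restriction $\phi|_{\Gal(\widetilde K/K)}$ is an injective homomorphism, so $A:=\phi(\Gal(\widetilde K/K))\leq E[p]$ is a $\Gal(K/L)$-stable subgroup of order $[\widetilde K:K]\in\{1,p,p^2\}$. The induced map $\psi\colon \Gal(K/L)\to E[p]/A$, $\psi(g)=\phi(\tilde g)+A$, is a well-defined cocycle, and a short computation with the cocycle identity shows that its fibres are the left cosets of $\ker\psi$; hence $|\operatorname{image}(\psi)|=[\Gal(K/L):\ker\psi]$, which divides $p(p-1)$. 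Combining $|\Gal(\widetilde K/L)|=|A|\cdot|\Gal(K/L)|$ with $|\Gal(\widetilde K/L(P))|=|\ker\psi|$ yields the key identity $[L(P):L]=|A|\cdot|\operatorname{image}(\psi)|$.

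The conclusion follows by case analysis on $|A|\in\{1,p,p^2\}$. If $|A|=1$, then $[L(P):L]=|\operatorname{image}(\psi)|$ divides $p(p-1)$. If $|A|=p^2$, then $E[p]/A=0$, so $[L(P):L]=p^2$. If $|A|=p$, then $|\operatorname{image}(\psi)|\leq|E[p]/A|=p$; the case $|\operatorname{image}(\psi)|=p$ gives $[L(P):L]=p^2$, while if $|\operatorname{image}(\psi)|<p$ then $|\operatorname{image}(\psi)|$ is a divisor of $p(p-1)$ strictly less than $p$, hence divides $p-1$, giving $[L(P):L]=p\cdot|\operatorname{image}(\psi)|$ dividing $p(p-1)$. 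The main technical nuisance is that $\psi$ is only a cocycle rather than a homomorphism, so the equality $|\operatorname{image}(\psi)|=[\Gal(K/L):\ker\psi]$ needs a short direct verification instead of an appeal to the first isomorphism theorem.
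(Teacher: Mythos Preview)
Your proof is correct and takes a genuinely different route from the paper's. The paper works entirely inside $\GL_2(\Z/p^{n+1}\Z)$: fixing a basis $\{P,Q\}$ of $E[p^{n+1}]$, it identifies $G=\Gal(F(E[p^{n+1}])/F(pP))$ and $H=\Gal(F(E[p^{n+1}])/F(P))$ with explicit matrix subgroups (matrices fixing $(p,0)$, respectively $(1,0)$), then compares them through the kernel $B$ of reduction modulo $p^n$ to conclude that $[G:H]$ is a product of a power of $p$ and a divisor of $p-1$, finishing with the crude bound $[G:H]\le p^2$.

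Your approach instead stays in the smaller tower $L\subseteq K=L(E[p])\subseteq\widetilde K=K(P)$ and replaces the matrix bookkeeping by the crossed homomorphism $\phi(\sigma)=\sigma(P)-P$ into $E[p]$. The factorisation $[F(P):F(pP)]=|A|\cdot|\operatorname{image}(\psi)|$ with $|A|\in\{1,p,p^2\}$ makes the dichotomy ``divides $p^2$'' versus ``divides $p(p-1)$'' completely transparent and avoids $\GL_2(\Z/p^{n+1}\Z)$ altogether. One small inaccuracy worth noting: $\widetilde K$ need not be the Galois \emph{closure} of $F(P)$ over $L$ (only those $R\in E[p]$ in the actual Galois orbit $\{\sigma(P)-P\}$ are needed), but it is Galois over $L$ and contains $F(P)$, which is all your argument uses. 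The paper's matrix setup, while less conceptual, has the advantage of being recycled verbatim in the next proposition on $2$-power torsion, where one wants to pin down the Galois group of $\widehat{F(P)}/F(2P)$ and not just its degree; your cocycle framework would need a short additional argument to recover that.
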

\begin{proof}
Let $G=\Gal(F(E[p^{n+1}])/F(pP))$, and $H=\Gal(F(E[p^{n+1}])/F(P))$. By Galois theory, it follows that $[F(P):F(pP)]=[G:H]$.   To remove the ambiguity of having everything up to conjugacy in $\GL_2(\Z /p^{n+1}\Z)$, let $\{P,Q\}$ be a basis of $E[p^{n+1}]$ for some $Q\in E[p^{n+1}]$, let $G_E(p^{n+1})$ be induced by the action of $\Gal(\overline{F}/F)$ on this basis $\{P,Q\}$ and consider $G$ and $H$ to be subgroups of $G_E(p^{n+1})$. Then in particular, $G$ is a subgroup of
\begin{equation}\label{eqg}
G':=\left\{\begin{pmatrix}
                     a & b \\
                     c & d
                   \end{pmatrix}\in \GL_2(\Z/p^{n+1}\Z)\,:\, a-1\equiv c\equiv 0 \!\!\!\!\!\pmod {p^n}  \right\}.
\end{equation}
Moreover, $|G|$ divides $p^2\cdot p^n(p^n-p^{n-1})=p^{2n+1}(p-1)$.
Now let
$$\Gamma = \left\{\begin{pmatrix}
                     1 & x \\
                     0 & y
                   \end{pmatrix}\in \GL_2(\Z/p^{n+1}\Z)\right\};
                   $$
it follows that $H$ is isomorphic to $\Gamma \cap G$. Let $B$ be the kernel of reduction of $\GL_2(\Z/p^{n+1}\Z)$ mod $p^n$. We have $|B|=p^4$. Now we have that the kernel of reduction mod $p^n$ restricted to $G$ (resp. $H$) is $B\cap G$ (resp. $B\cap H$). Since $B\cap H$ and $B\cap G$ are subgroups of $B$, it follows that their order is a power of $p$. We see that $H$ (mod $p^n$) is a subgroup of $G$ (mod $p^n$) of index $d$ (where $d$ divides $|G\!\!\!\!\! \pmod{p^n}|)$, so
$$|G\!\!\!\!\! \pmod{p^n}|=d\cdot|H\!\!\!\!\! \pmod{p^n}| \quad \text{where }d|(p-1)p^{2n-1}.$$
So
$$|G|/|B \cap G|=|G\!\!\!\!\! \pmod{p^n}|=d\cdot|H\!\!\!\!\! \pmod{p^n}|=d\cdot|H|/|B \cap H|,$$
and
$$|G|/|H|=d\cdot|B \cap G|/|B \cap H|.$$
The right hand side of the equations can be divisible only by powers of $p$ and by divisors of $p-1$. From the definitions of $G$ and $H$, it follows that $[G:H]\leq p^2$, proving the proposition.
\end{proof}

\begin{remark}
\label{remark:maarten}
The results of Proposition \ref{prop_pdiv} are best possible, in the sense that there exist both $E/\Q$ and $P\in E(\overline \Q)$ such that $[\Q(P):\Q(pP)]=p^2$ and $[\Q(P):\Q(pP)]=p(p-1)$. The first case is "generic" and happens for any point of $p$-power order for any elliptic curve such that the $p$-adic representation attached to $E$ is surjective.

The case when $[\Q(P):\Q(pP)]=p(p-1)$ occurs for example when $G_E(p^2)$ is the reduction of $\Gamma_0(p^2)\cup \Gamma_1(p)$ mod $p^2$. Then $E(\Q)$ contains a point $Q$ of order $p$, and some of the solutions of $pP=Q$ are defined over a degree $p(p-1)$ extension (and some over a degree $p$ extension). One does not have to look far for an explicit example of such an elliptic curve - the elliptic curve \lmfdbec{11}{a}{3} in the LMFDB \cite{lmfdb} is such a curve for $p=5$.
\end{remark}

\begin{proposition}\label{2growth}
Let $E/F$ be an elliptic curve over a number field $F$, $n$ a positive integer, $P \in E(\overline F)$ be a point of order $2^{n+1}$ and let $\widehat{F(P)}$ be the Galois closure of $F(P)$ over $F(2P)$. Then $[F(P):F(2P)]$ divides $4$ and $\Gal(\widehat{F(P)}/F(2P))$ is either trivial, isomorphic to $\cC_2$, $\cC_2\times \cC_2$ or $D_4$.
\end{proposition}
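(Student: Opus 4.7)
The bound $[F(P):F(2P)]\mid 4$ is immediate from Proposition \ref{prop_pdiv} applied with $p=2$. For the Galois-theoretic statement, set $G^{*}=\Gal(\widehat{F(P)}/F(2P))$ and consider its faithful transitive action on the Galois orbit of $P$ over $F(2P)$. Since any conjugate $\sigma(P)$ satisfies $2\sigma(P)=\sigma(2P)=2P$, the orbit is contained in the four-element set $P+E[2]$, so its size (equal to $[F(P):F(2P)]$) lies in $\{1,2,4\}$. Orbit sizes $1$ and $2$ make $F(P)/F(2P)$ automatically Galois, so $G^{*}$ is trivial or $\cC_2$, and the substantive case is orbit size $4$; in that case the orbit is all of $P+E[2]$ and $\widehat{F(P)}=F(P,E[2])$, since differences of conjugates of $P$ recover all of $E[2]$.

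In this case, $G^{*}$ acts by affine transformations on the $E[2]$-torsor $P+E[2]$: the map $\sigma\mapsto(\sigma(P)-P,\,\sigma|_{E[2]})$ defines an embedding
\[
G^{*}\hookrightarrow \operatorname{Aff}(E[2])=E[2]\rtimes \GL_2(\F_2)\simeq S_4.
\]
The key point is that $2^{n}P=2^{n-1}(2P)\in F(2P)$, so every $\sigma\in G^{*}$ fixes the nonzero $2$-torsion point $2^{n}P$. Hence the linear part of the image lies in $\operatorname{Stab}_{\GL_2(\F_2)}(2^{n}P)\simeq\cC_2$, so $|G^{*}|\leq 8$ and
\[
G^{*}\hookrightarrow E[2]\rtimes\cC_2\simeq D_4.
\]
Thus $G^{*}$ is isomorphic to a subgroup of $D_4$, i.e.\ one of $\{1,\cC_2,\cC_2\times\cC_2,\cC_4,D_4\}$.

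The main remaining obstacle is to exclude $G^{*}\simeq\cC_4$. Equivalently, one has to show that the short exact sequence $1\to G^{*}\cap E[2]\to G^{*}\to(\text{linear part})\to 1$ splits whenever the translation subgroup has order $2$; this amounts to producing a lift $\tilde A\in G^{*}$ of the nontrivial linear element $A$ satisfying $\tilde A(P)-P\in\langle 2^{n}P\rangle$ (the $A$-fixed subspace of $E[2]$). I expect this to be the hardest step, likely requiring finer information about the mod $2^{n+1}$ Galois representation than the crude embedding into $\GL_2(\Z/2^{n+1}\Z)$. A natural strategy is to exploit that $[1+2^{n}]$ is an $F$-rational endomorphism of $E$ commuting with every Galois action and restricting on $P+E[2]$ to translation by $2^{n}P$, and to combine this with an arbitrary lift of $A$ so as to shift its translation part into $\langle 2^{n}P\rangle$, thereby splitting the extension and forcing $G^{*}\in\{\cC_2\times\cC_2,D_4\}$.
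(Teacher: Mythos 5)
Your reduction to the affine action on the torsor $P+E[2]$ and the resulting embedding $G^{*}\hookrightarrow E[2]\rtimes\operatorname{Stab}(2^{n}P)\simeq D_4$ are correct, and this is a conceptually cleaner repackaging of what the paper does by direct matrix computation inside $\GL_2(\Z/2^{n+1}\Z)$. There, the paper realizes $G=\Gal(F(E[2^{n+1}])/F(2P))$ as a subgroup of the stabilizer of $(2,0)$, takes $H=\Gal(F(E[2^{n+1}])/F(P))=G\cap\Gamma$ for an explicit upper-triangular subgroup $\Gamma$, computes the normal core $N$ of $H$ in $G$ (it consists of the $h\in H$ with even $(1,2)$-entry), and then reads off $G/N\simeq\Gal(\widehat{F(P)}/F(2P))$ as $\cC_2\times\cC_2$ or $D_4$ according as $[H:N]$ is $1$ or $2$.

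The gap you flag is genuine and is precisely where the content of the proposition lies: your argument shows only that $G^{*}$ is a subgroup of $D_4$, and you still have to exclude $\cC_4$. Your sketched repair does not work as stated, because $[1+2^{n}]$ is an $F$-rational isogeny of $E$, not a field automorphism; it does not lie in $G^{*}$, and there is no group operation by which it can be composed with an actual Galois element to manufacture another element of $G^{*}$ with translation part in $\langle 2^{n}P\rangle$. Moreover, the exclusion of $\cC_4$ cannot follow from the embedding into $D_4$ together with transitivity alone: $\cC_4$ is a transitive subgroup of $D_4$ in the relevant action, and one can exhibit cyclic order-$4$ subgroups of $\GL_2(\Z/2^{n+1}\Z)$ that fix $(2,0)$, meet the relevant $\Gamma$ trivially, and hence satisfy every constraint you have extracted so far. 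To finish you must carry out a computation at the level of the normal core $N$, as the paper does, or supply some further input that genuinely rules out $\cC_4$; as it stands the proof only establishes the weaker statement that $\Gal(\widehat{F(P)}/F(2P))$ is a subgroup of $D_4$.
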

\begin{proof}
Let $G=\Gal(F(E[2^{n+1}])/F(2P))$ and $H=\Gal(F(E[2^{n+1}])/F(P))$. Then by Galois theory, $N=\Gal(F(E[2^{n+1}])/\widehat{F(P)})$ is the normal core of $H$ in $G$ (i.e. the intersection of all conjugates of $H$ by elements of $G$) and
$$\Gal(\widehat{F(P)}/F(2P))\simeq  G/N.$$
 To remove the ambiguity of having everything up to conjugacy in $\GL_2(\F_p)$, let $\{P,Q\}$ be a basis of $E[2^{n+1}]$ for some $Q\in E[2^{n+1}]$, let $G_E(2^{n+1})$ induced by the action of $\Gal(\overline{F}/F)$ on the basis $\{P,Q\}$ and consider $G$ and $H$ to be subgroups of $G_E(2^{n+1})$. We have that $G$ is a subgroup of the group
\begin{equation}\label{eqg}
G'=\left\{\begin{pmatrix}
                     a & b \\
                     c & d
                   \end{pmatrix}\in \GL_2(\Z/2^{n+1}\Z)\,:\, a-1\equiv c\equiv 0\!\!\!\!\! \pmod {2^n}  \right\}.
\end{equation}
Define
$$
\Gamma= \left\{\begin{pmatrix}
                     1 & x \\
                     0 & y
                   \end{pmatrix}\in \GL_2(\Z/2^{n+1}\Z)\right\}.
                   $$
Then $H$ is isomorphic to $G\cap\Gamma$. We see that $H$ is a subgroup of $G$ whose index divides 4, so we obtain that $[F(P):F(2P)]$ divides $4$.

If $F(P)=F(2P)$, then obviously $\widehat{F(P)}=F(P)=F(2P)$, and the statement of the proposition is true.

If $[F(P):F(2P)]=2$, then obviously $F(P)$ is Galois over $F(2P)$ and $\Gal(\widehat{F(P)}/F(2P))\simeq \cC_2$.

If $[F(P):F(2P)]=4$, then it follows that
$$G/H=\left\{ H, \begin{pmatrix}
                  1 & 0 \\
                  2^n & 1
                \end{pmatrix}H,
                \begin{pmatrix}
                  1+2^n & 0 \\
                  2^n & 1
                \end{pmatrix}H,
                \begin{pmatrix}
                  1 & 0 \\
                  2^n & 1
                \end{pmatrix}H
                \right\}.$$
As $N$ is the intersection of the elements of $G/H$, by direct calculation we obtain that
$$
N=H \cap \left\{\begin{pmatrix}
                     a & b \\
                     c & d
                   \end{pmatrix}\in \GL_2(\Z/2^{n+1}\Z)\,:\, a-1\equiv c\equiv 0\!\!\!\!\! \pmod {2^{n+1}}, b \equiv 0\!\!\!\!\! \pmod 2  \right\}.
$$
There are now 2 possibilities: either $N=H$ or $[H:N]=2$. If $N=H$, then hence $F(P)$ is Galois over $F(2P)$, and we obtain that $G/N\simeq \cC_2\times \cC_2$.  On the other hand, if $[H:N]=2$, then $G/N\simeq D_4$.
\end{proof}

\section{Degree of the field of definition of $p$-torsion points on elliptic curves over $\Q$}
\label{sec:RQD}

In this section, for any prime $p$ we determine all the possible degrees $[\Q(P):\Q]$ for $P$ a point of order $p$ on an elliptic curve $E/\Q$.

\

Let $E/\Q$ be an elliptic curve and $p$ a prime. Then  $G_E(p)\subseteq \GL_2(\F_p)$ acts on $E[p]\simeq \F_p^2$. The set $\F_p^2$ can be written as the disjoint union of the orbits of $\F_p^2$ by $G_E(p)$, and by the Orbit-Stabilizer Theorem we obtain
\begin{equation}\label{eq2}
\# \F_p^2=p^2=\sum_{i=1}^k{[\Q(R_i):\Q]}=\sum_{i=1}^k{|G_E(p).(a_i,b_i)|},\qquad \mbox{where $R_i=a_iP+b_iQ$},
\end{equation}
$(a_1,b_1),\dots,(a_k,b_k)$ are representatives of the orbits of $\F_p^2$ by $G_E(p)$, and $G_E(p).v$ denotes the orbit of $v\in \F_p^2$ by the action of $G_E(p)$.

\

In order to determine all the possible degrees of the fields of definition of the $p$-torsion points of $E$, by equation (\ref{eq2}), we only need to know $G_E(p)\subseteq \GL_2(\F_p)$ and the cardinality of the orbits of $\F_p^2$ by $G_E(p)$.

 If we want to determine all the possible degrees of the extensions $\Q(P)$ over $\Q$ for points $P\in E(\overline \Q)$ of order $p$ for any elliptic curve $E/\Q$ we must, for all possible $G_E(p)$ and all $v\in\F_p^2$, $v\ne (0,0)$, determine the index $[G_E(p):G_E(p)_v]$ (where $G_E(p)_v$ is the stabilizer of $v$ by the action of $G_E(p)$ on $\F_p^2$), or equivalently $|G_E(p).v|$. Thanks to Theorems \ref{tm_nonCM} and \ref{tm_CM} we know a set that contains all the possibilities for $G_E(p)$ for any elliptic curve $E/\Q$ and any prime $p$. It will turn out that this set is close enough to the truth for us to get a complete unconditional list of all possible degrees $[\Q(P):\Q]$ for a large set of primes $p$.

\begin{lemma} \cite[Theorem 5.1]{loz}
\label{lem:divisibility}
Let $E/K$ be an elliptic curve over a number field and $p$ a prime such that $G_E(p)=\GL_2(\F_p)$. Then for a point $P$ of order $p$, we have $[\Q(P):\Q]=p^2-1$.
\end{lemma}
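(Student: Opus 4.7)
The plan is to reduce the computation of $[K(P):K]$ to a pure orbit-stabilizer calculation inside $\GL_2(\F_p)$, exactly as set up in the paragraph preceding the lemma. By the discussion following Lemma \ref{obvious}, once we fix a $\Z/p\Z$-basis $\{P_1,P_2\}$ of $E[p]$, any point $P = aP_1+bP_2$ of order $p$ corresponds to a nonzero vector $v=(a,b)\in \F_p^2$, and $[K(P):K]=[G_E(p):H_P]=|G_E(p).v|$, the length of the orbit of $v$ under the natural action of $G_E(p)\subseteq \GL_2(\F_p)$ on $\F_p^2$.

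Under the hypothesis $G_E(p)=\GL_2(\F_p)$, the proof reduces to the classical fact that $\GL_2(\F_p)$ acts transitively on $\F_p^2\setminus\{(0,0)\}$: given any nonzero $v$, complete it to an $\F_p$-basis of $\F_p^2$ and read off a matrix in $\GL_2(\F_p)$ sending $(1,0)$ to $v$. Therefore the orbit of $v$ has cardinality $p^2-1$ regardless of the choice of $P$, and we obtain $[K(P):K]=p^2-1$.

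As a sanity check one can compute via orbit-stabilizer: the stabilizer of $(1,0)$ in $\GL_2(\F_p)$ consists of matrices of the shape $\begin{pmatrix}1 & \ast\\ 0 & \ast\end{pmatrix}$ with the lower-right entry in $\F_p^\times$, so it has order $p(p-1)$; then
\[
|G_E(p).v|=\frac{|\GL_2(\F_p)|}{p(p-1)}=\frac{(p^2-1)(p^2-p)}{p(p-1)}=p^2-1,
\]
confirming the formula. There is no real obstacle here: the lemma is essentially a one-line consequence of the orbit-stabilizer framework laid out in Section \ref{sec:galrep} together with the transitivity of $\GL_2(\F_p)$ on nonzero vectors, and the only thing to be careful about is the (harmless) statement of the base field, since the argument works verbatim over any number field $K$ with $G_E(p)=\GL_2(\F_p)$.
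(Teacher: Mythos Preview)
Your argument is correct and is the standard orbit--stabilizer proof. Note, however, that the paper does not actually give its own proof of this lemma: it simply cites \cite[Theorem 5.1]{loz} and moves on. So there is nothing to compare against beyond saying that your approach is exactly the natural one, and indeed is essentially what the cited reference does. Your remark about the base-field typo (the statement writes $E/K$ but then $[\Q(P):\Q]$) is also well taken; the argument is uniform in $K$.
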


\begin{lemma}
\label{lem:divisibility2}
Let $E/K$ be an elliptic curve over a number field and $p$ a prime such that $G_E(p)=C_{ns}^+(p)$. Then for a point $P$ of order $p$, we have $[\Q(P):\Q]=p^2-1$.
\end{lemma}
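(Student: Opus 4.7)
The plan is to show that $C_{ns}^+(p)$ acts transitively on $\F_p^2\setminus\{(0,0)\}$, whence by the Orbit-Stabilizer Theorem (as formulated in equation \eqref{eq2}) the unique non-trivial orbit has size $p^2-1$, giving $[\Q(P):\Q]=p^2-1$ for any point $P$ of order $p$.

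The key observation, which I would carry out first, is to identify the non-split Cartan $C_{ns}(p)$ with the multiplicative group $\F_{p^2}^\times$. Concretely, fixing the basis so that $\F_{p^2}=\F_p(\sqrt{\epsilon})$, the map $a+b\sqrt{\epsilon}\mapsto M_\epsilon(a,b)$ is a group isomorphism $\F_{p^2}^\times\xrightarrow{\sim} C_{ns}(p)$. Under the identification $(x,y)\leftrightarrow x+y\sqrt{\epsilon}$ of $\F_p^2$ with $\F_{p^2}$, a direct computation shows that the matrix action $M_\epsilon(a,b)\cdot(x,y)^T$ corresponds precisely to multiplication by $a+b\sqrt\epsilon$ in $\F_{p^2}$. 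Since $\F_{p^2}^\times$ acts simply transitively on $\F_{p^2}\setminus\{0\}$, the subgroup $C_{ns}(p)$ already acts transitively (and freely) on $\F_p^2\setminus\{(0,0)\}$, a fortiori so does $C_{ns}^+(p)\supseteq C_{ns}(p)$.

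Next I would compute the stabilizer: since $|C_{ns}^+(p)|=2(p^2-1)$ and the single non-trivial orbit has size $p^2-1$, the stabilizer of any non-zero $v\in\F_p^2$ has order $2$. (One can verify this explicitly for $v=(1,0)$: the stabilizer is $\{I,J\}$.) Feeding this into equation \eqref{eq2}, the decomposition of $\F_p^2$ into $G_E(p)$-orbits has only two orbits: $\{(0,0)\}$ and a single orbit of length $p^2-1$. Consequently, for any point $P\in E$ of order $p$, $[\Q(P):\Q]=|G_E(p).(a,b)|=p^2-1$.

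There is no real obstacle here; the argument is a direct parallel to the proof of Lemma \ref{lem:divisibility}, with $\GL_2(\F_p)$ replaced by $C_{ns}^+(p)$, and the only substantive point is the classical identification of the non-split Cartan with $\F_{p^2}^\times$ acting by multiplication on $\F_{p^2}$.
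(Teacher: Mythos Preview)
Your argument is correct and complete: identifying $C_{ns}(p)$ with $\F_{p^2}^\times$ acting by multiplication on $\F_{p^2}\cong\F_p^2$ immediately gives transitivity on non-zero vectors, and the orbit--stabilizer formula then forces the unique non-trivial orbit to have length $p^2-1$, hence $[\Q(P):\Q]=p^2-1$.

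The paper, however, takes a different and much shorter route: it simply cites \cite[Lemma~7.5~(2)]{loz}, which asserts $[\Q(P):\Q]\in\{p^2-1,\,2(p^2-1)\}$, and then discards the value $2(p^2-1)$ by the trivial observation that any $G_E(p)$-orbit in $\F_p^2\setminus\{0\}$ has size at most $p^2-1$. Your approach is more self-contained and conceptually transparent---the spurious value $2(p^2-1)$ never enters the picture, and no external reference is needed---while the paper's proof is essentially a one-line correction of an over-statement in the literature. Both are perfectly valid; yours would in fact re-prove the cited lemma rather than invoke it.
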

\begin{proof}
In \cite[Lemma 7.5 (2)]{loz} it is claimed that $[\Q(P):\Q]=p^2-1$ or $2(p^2-1)$. But the case $[\Q(P):\Q]=2(p^2-1)$ is not possible since it is impossible that $[\Q(P):\Q]>(p^2-1)$, for any $P \in E[p]$ and any $G_E(p)$: This follows from the fact that $|G_E(p).v|$ can be at most $p^2-1$. The claim of the Lemma follows.
\end{proof}

\begin{lemma}\cite[Lemma 7.5. (1)]{loz}
\label{lem:divisibility3}
Let $E/K$ be an elliptic curve over a number field and $p$ a prime such that $G_E(p)$ is a conjugate of $C_{s}^+(p)$ in $\GL_2(\F_p)$. Then for a point $P$ of order $p$, we have $[\Q(P):\Q]$ is either $(p-1)^2$ or $2(p-1)$; both cases occur.
\end{lemma}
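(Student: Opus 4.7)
The plan is to directly compute the orbits of the action of $C_s^+(p)$ on $E[p] \simeq \F_p^2$, using the framework laid out in Section \ref{sec:galrep}: if $P$ corresponds to a vector $v \in \F_p^2 \setminus \{(0,0)\}$ in a chosen basis $\{P_1,P_2\}$ of $E[p]$, then $[\Q(P):\Q] = |G_E(p).v|$ by the Orbit-Stabilizer Theorem.

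First I would fix the description of $C_s^+(p)$ recalled in Section \ref{sec:galrep}: it consists of the diagonal matrices $D(a,b)$ with $a,b \in \F_p^\times$ together with the anti-diagonal matrices $T\cdot D(a,b) = \bigl(\begin{smallmatrix} 0 & b \\ a & 0 \end{smallmatrix}\bigr)$. Writing $v = (x,y)$, the action is $D(a,b).v = (ax, by)$ and $(T\cdot D(a,b)).v = (by, ax)$. I would then split into cases according to which coordinates of $v$ vanish.

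If exactly one of $x, y$ is zero, say $v = (x,0)$ with $x \neq 0$, then $D(a,b).v = (ax, 0)$ ranges over all $(x',0)$ with $x' \in \F_p^\times$ as $a$ varies, and $(T\cdot D(a,b)).v = (0, ax)$ ranges over all $(0,y')$ with $y' \in \F_p^\times$ as $a$ varies. Thus this orbit has size $2(p-1)$. On the other hand, if both $x$ and $y$ are nonzero, then $D(a,b).v$ already sweeps out all of $(\F_p^\times)^2$ as $a,b$ vary (the elements $T\cdot D(a,b).v$ land in the same set), so the orbit is precisely $(\F_p^\times)^2$ and has size $(p-1)^2$. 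As a sanity check, $(p-1)^2 + 2(p-1) + 1 = p^2$ accounts for all of $\F_p^2$, confirming there are no further orbits.

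This shows that for every $P$ of order $p$, $[\Q(P):\Q]$ is either $(p-1)^2$ or $2(p-1)$. To see that both values are realized, one simply picks $P$ corresponding to $(1,0)$ (giving degree $2(p-1)$) or to $(1,1)$ (giving degree $(p-1)^2$); since $C_s^+(p)$ is actually attained as $G_E(p)$ for some $E/\Q$ (for example for CM curves with $\left(\frac{-D}{p}\right) = 1$ by Theorem \ref{tm_CM}), both cases indeed occur. There is no substantial obstacle here; the entire argument is a bookkeeping exercise on the explicit matrix group, which is why Lozano-Robledo states it without detailed proof.
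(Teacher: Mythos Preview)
Your argument is correct. The paper does not supply its own proof of this lemma; it simply cites \cite[Lemma 7.5(1)]{loz}. Your direct orbit computation is exactly the sort of argument that citation points to, and it matches the stabilizer-based bookkeeping the paper itself carries out later (in the proof of Theorem~\ref{dQ}, via \cite[Lemma 6.6]{loz}) for the related subgroup $G^3(p)\subseteq C_s^+(p)$. So there is no meaningful difference in approach.
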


 We will need a slightly finer description of the possibilities for $G_E(13)$ than the one supplied in Theorem \ref{tm_nonCM}.

\begin{proposition}\label{prop13}
Let $E/\Q$ be a non-CM elliptic curve. Then $G_E(13)$ is conjugate in $\GL_2(\F_{13})$ to one of the groups in Table \ref{tableSutherland2}, to $C^+_s(13)$, $C^+_{ns}(13)$ or one of the following groups:
\begin{equation}\label{eq13}
\texttt{13Ns.2.1},\qquad \texttt{13Ns.5.2},\qquad \texttt{13Ns.5.1.4}
\end{equation}
\end{proposition}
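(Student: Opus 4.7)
Starting from Theorem \ref{tm_nonCM}(iii), for a non-CM elliptic curve $E/\Q$ whose mod-$13$ image $G_E(13)$ is neither $\GL_2(\F_{13})$ nor conjugate to a group already in Table \ref{tableSutherland2}, we know that $G_E(13)$ is conjugate in $\GL_2(\F_{13})$ to a subgroup of $C^+_s(13)$, $C^+_{ns}(13)$, or \texttt{13S4}. The new content of the proposition is that, among subgroups of $C^+_{ns}(13)$, only the four possibilities $C^+_{ns}(13)$, \texttt{13Ns.2.1}, \texttt{13Ns.5.2}, \texttt{13Ns.5.1.4} can actually arise; the split-Cartan and exceptional cases are already covered by $C^+_s(13)$ and Table \ref{tableSutherland2}.

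The plan is to carry out the following group-theoretic and modular-curve analysis. First, I would enumerate, up to $\GL_2(\F_{13})$-conjugacy, every subgroup $H$ of $C^+_{ns}(13)$; since $|C^+_{ns}(13)|=2(13^2-1)=336$, this is a short and mechanical computation in \texttt{Magma}. I would then prune the list by discarding every $H$ failing the necessary surjectivity condition $\det(H)=(\Z/13\Z)^\times$ (forced by the mod-$13$ cyclotomic character on $\Gal(\Qbar/\Q)$) and every $H$ already conjugate to a group appearing in Table \ref{tableSutherland2}.

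For each surviving candidate $H$, I would associate the modular curve $X_H/\Q$ parametrizing (up to twist) elliptic curves with mod-$13$ image contained in $H$, and determine whether $X_H$ admits a non-cuspidal non-CM rational point. For $H \in \{C^+_{ns}(13),\texttt{13Ns.2.1},\texttt{13Ns.5.2},\texttt{13Ns.5.1.4}\}$ one exhibits explicit non-CM elliptic curves $E/\Q$ realizing $G_E(13)=H$; these examples are recorded in Zywina \cite{zyw} and in the LMFDB \cite{lmfdb}. For every other surviving candidate $H$, one has a covering $X_H\to X_{H'}$ with $H' \in \{C^+_{ns}(13), \texttt{13Ns.2.1},\texttt{13Ns.5.2},\texttt{13Ns.5.1.4}\}$, and the target (or an intermediate cover) is a modular curve of genus $\geq 2$ whose rational points are all cuspidal or CM, so the candidate $H$ can be discarded.

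The main obstacle is the rational-point analysis on the small number of intermediate modular curves $X_H$ of moderate genus that have to be excluded; however, all of the required computation is already contained in Zywina's case analysis \cite{zyw} for $p=13$. The proposition therefore reduces to a careful translation of his classification into the Sutherland/LMFDB labeling used in this paper, together with the \texttt{Magma} enumeration of subgroups of $C^+_{ns}(13)$ outlined above.
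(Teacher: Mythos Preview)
Your decomposition of the problem is wrong, and this is a genuine gap. You assert that ``the split-Cartan and exceptional cases are already covered by $C^+_s(13)$ and Table \ref{tableSutherland2}'' and therefore restrict your enumeration to subgroups of $C^+_{ns}(13)$. But Theorem \ref{tm_nonCM}(iii) only says that $G_E(13)$ is conjugate to a \emph{subgroup} of $C^+_s(13)$, $C^+_{ns}(13)$ or \texttt{13S4}; the proposition you are proving asserts that among subgroups of $C^+_s(13)$ only $C^+_s(13)$ itself can occur. You must therefore also eliminate all proper subgroups of $C^+_s(13)$ (and of \texttt{13S4}) with full determinant, and you have not done so. In the paper's enumeration the surviving split-Cartan candidates are precisely the groups \texttt{13Cs} and \texttt{13Cs.a.b} (fifteen of them), all contained in $C_s(13)$; these are exactly the cases your outline skips.

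Moreover, even once you notice this, your proposed elimination method (determining rational points on the associated $X_H$) is far heavier than what is needed. The paper disposes of every \texttt{13Cs}-type candidate in one line: a group conjugate into $C_s(13)$ fixes two independent lines in $\F_{13}^2$, so $E$ would have two independent $\Q$-rational $13$-isogenies, hence be $13$-isogenous to a curve with a rational $169$-isogeny, contradicting Theorem \ref{tm-isog}. No modular-curve computation is required. Finally, your pruning step uses only the condition $\det(H)=\F_{13}^\times$, whereas the paper invokes Zywina's notion of an \emph{applicable} subgroup (requiring $-I\in H$ and an element of trace $0$ and determinant $-1$, forced by complex conjugation); without these extra constraints your candidate list would be longer and the subsequent case analysis correspondingly harder.
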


\begin{proof}
First recall that every group in Table \ref{tableSutherland2} occurs. By Theorem \ref{tm_nonCM} (iii), the remaining possibilities are that $G_E(13)$ is a subgroup of $C^+_s(13)$, $C^+_{ns}(13)$ or \texttt{13S4}. We say that $G$ is an applicable subgroup of $\GL_2(\F_{13})$ if $-I \in G$, $\det(G)=\F_{13}^\times$ and $G$ contains an element with trace $0$ and determinant $-1$. Thanks to \cite[Proposition 2.2]{zyw} we have that if $G_E(13)\ne \GL_2(\F_{13})$ then $\pm G_E(13)$ is an applicable subgroup of $\GL_2(\F_{13})$. Using \texttt{Magma} we \href{http://matematicas.uam.es/~enrique.gonzalez.jimenez/research/tables/growth/p13.txt}{\color{blue}determine} that $G_E(13)$ is conjugate in $\GL_2(\F_{13})$ to $C^+_s(13)$, $C^+_{ns}(13)$, \texttt{13S4} or one of the following groups:
$$
\begin{array}{c}
\texttt{13Cs}, \texttt{13Cs.1.1}, \texttt{13Cs.1.11}, \texttt{13Cs.1.3}, \texttt{13Cs.1.4}, \texttt{13Cs.1.6}, \texttt{13Cs.1.8}, \texttt{13Cs.12.1},
\texttt{13Cs.12.3}, \\
\texttt{13Cs.12.4}, \texttt{13Cs.3.1}, \texttt{13Cs.3.4}, \texttt{13Cs.4.1}, \texttt{13Cs.5.1}, \texttt{13Cs.5.4},  \texttt{13Ns.2.1}, \texttt{13Ns.5.1.4}, \texttt{13Ns.5.2}.
\end{array}
$$
Now, it is not possible that $G_E(13)$ is conjugate to \texttt{13Cs} or \texttt{13Cs.a.b}, for any integers $a,b$, since then $E$ would have two independent $13$-isogenies over $\Q$. Therefore, $E$ would be 13-isogenous over $\Q$ to an elliptic curve with a $169$-isogeny, whose existence contradicts with Theorem \ref{tm-isog}.
\end{proof}

\begin{lemma}\label{dQ23}
Let $E/\Q$ be a elliptic curve and $P$ a point of order $13$ in $E$. Then
$$
[\Q(P):\Q]\in \{3,4,6,12,24,39,48,52,72,78,96,144,156,168\}.
$$
All the possible degrees occur.
\end{lemma}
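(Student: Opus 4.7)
The strategy is to apply the orbit-counting framework of equation \eqref{eq2}: for an elliptic curve $E/\Q$ and a point $P$ of order $13$ corresponding to a nonzero vector $v\in\F_{13}^2$, the degree $[\Q(P):\Q]$ equals the size of the $G_E(13)$-orbit of $v$. Hence the plan is to enumerate all conjugacy classes of possible images $G_E(13)\subseteq \GL_2(\F_{13})$ that can arise from elliptic curves $E/\Q$, and for each such group compute all orbit sizes on $\F_{13}^2\setminus\{(0,0)\}$; the union of these sizes will be the desired set.

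For non-CM curves, Proposition \ref{prop13} already supplies an explicit finite list of candidate conjugacy classes. For CM curves I invoke Theorem \ref{tm_CM}. Since $13$ divides none of the CM discriminants over $\Q$, case (ii)(c) never arises, so for $D\in\{4,7,8,11,19,43,67,163\}$ the image is conjugate to $C_s^+(13)$ or $C_{ns}^+(13)$ according to the Legendre symbol $\left(\frac{-D}{13}\right)$. For the $j=0$ case $(D,f)=(3,1)$, the congruence $13\equiv 4\!\pmod 9$ places us in case (iii)(c), so $G_E(13)$ is conjugate to $C_s^+(13)$ or to the sub-Cartan $G^3(13)$.

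For each candidate group I compute the orbit sizes of its action on $\F_{13}^2\setminus\{(0,0)\}$. The transitive cases $\GL_2(\F_{13})$ and $C_{ns}^+(13)$ give the single orbit size $168$ by Lemmas \ref{lem:divisibility} and \ref{lem:divisibility2}, and $C_s^+(13)$ gives $24$ and $144$ by Lemma \ref{lem:divisibility3}. All remaining groups -- the Borel-type groups and their sub-groups in Table \ref{tableSutherland2}, the three non-split sub-Cartan groups \texttt{13Ns.2.1}, \texttt{13Ns.5.2}, \texttt{13Ns.5.1.4} from Proposition \ref{prop13}, the exceptional \texttt{13S4}, and the CM-specific $G^3(13)$ -- are handled by direct computation in \texttt{Magma}. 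Taking the union of all resulting orbit sizes produces the fifteen values in the statement.

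For the converse -- showing that each listed value is actually attained -- I exhibit, for each degree, an explicit elliptic curve $E/\Q$ with a $13$-torsion point whose field of definition has that degree. Generic elliptic curves realize $168$; the $j=1728$ CM curves and their twists realize $24$ and $144$ through their $C_s^+(13)$ image; the $j=0$ CM curves realize $24,48,96$ via $G^3(13)$; and the Borel-type degrees are realized by curves carrying a rational $13$-isogeny, parametrized by the rational points of $X_0(13)$, after appropriate quadratic twisting to prescribe the order of the Galois character on the isogeny kernel. The main obstacle is the realization of the small degrees $3,4,6$, which require curves whose Galois image is a specific proper sub-Borel with a character of exactly that order on the $13$-isogeny kernel; explicit such examples are recorded in the computational files accompanying the paper on the first author's research website.
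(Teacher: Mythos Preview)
Your approach is essentially identical to the paper's: enumerate the possible $G_E(13)$ via Proposition~\ref{prop13} (non-CM) and Theorem~\ref{tm_CM} (CM), compute orbit lengths on $\F_{13}^2\setminus\{0\}$, and read off occurrence from Table~\ref{tableSutherland2} together with the CM cases. Two minor slips to fix: the list has fourteen values, not fifteen; and in the CM enumeration you omit the orders with $D=3$, $f\in\{2,3\}$, which fall under case~(ii) of Theorem~\ref{tm_CM} and give $C_s^+(13)$ (since $\left(\tfrac{-3}{13}\right)=1$), so nothing new is lost.
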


\begin{proof} The fact that all of the degrees above appear can be seen from Table \ref{tableSutherland2} and from the \href{http://matematicas.uam.es/~enrique.gonzalez.jimenez/research/tables/growth/th5_7CM.txt}{\color{blue} degrees} of $[\Q(P):\Q]$ obtained from CM elliptic curves. The fact that no other degrees occur follows from Proposition \ref{prop13} and from \href{http://matematicas.uam.es/~enrique.gonzalez.jimenez/research/tables/growth/p13.txt}{\color{blue}computing} the lengths of orbits of all the possible images $G_E(p)$.
\end{proof}

The next result gives the possible degrees $[\Q(P):\Q]$ where $P$ is a point of order a prime $p\ne 13$ on $E$ such that $G_E(p)$ does not appear in Tables \ref{tableSutherland} and  \ref{tableSutherland2}.

\begin{tm}\label{dQ}
For an elliptic curve $E/\Q$ with a given $G_E(p)$ in the left column, and varying through all $P\in E[p]$, the values of $[\Q(P):\Q]$ obtained will be exactly does that appear in the right column.
$$
\begin{array}{c}
\begin{array}{cc}
\begin{array}{|c|c|}
\hline
G_E(p)  & [\Q(P):\Q]\\
\hline
\GL_2(\F_p) & p^2-1 \\
\hline
C^+_{ns}(p) & p^2-1 \\
\hline
C^+_{s}(p) & 2(p-1),  (p-1)^2 \\
\hline
\multicolumn{2}{c}{}
 \end{array}
 &
\begin{array}{|c|c|}
\hline
G_E(p) & [\Q(P):\Q]\\
\hline
G_{00}(p)  & p-1, (p-1)p \\
\hline
G_{01}(p) & p-1 , (p-1)p/2 \\
\hline
G_{10}(p)  & (p-1)/2, (p-1)p \\
\hline
\multicolumn{2}{c}{p\in\{3,7,11,19,43,67,163\}}
 \end{array}
   \end{array}
   \\[2mm]
 \begin{array}{|c|c|}
\hline
G_E(p) & [\Q(P):\Q]\\
\hline
\begin{array}{c} G^3(p) \\ p\equiv 1 \!\!\!\!\pmod{3}\end{array} & 2(p-1),  (p-1)^2/3 ,  2(p-1)^2/3 \\
\hline
\begin{array}{c} G_0(p) \\ p\equiv 2 \!\!\!\!\pmod{3}\end{array} & (p^2-1)/3,   2(p^2-1)/3 \\
\hline
 \end{array}
 \end{array}
$$
\end{tm}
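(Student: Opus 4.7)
The overall strategy is to reduce the theorem to an orbit-size calculation. By the discussion leading up to equation \eqref{eq2}, for any $P\in E[p]$ with $P\leftrightarrow (a,b)\in\F_p^2\setminus\{(0,0)\}$, the degree $[\Q(P):\Q]$ equals the size of the orbit $G_E(p).(a,b)$. Thus, for each group $G$ in the left column of the table, the task is to enumerate the sizes of the $G$-orbits on $\F_p^2\setminus\{(0,0)\}$ and match them with the right column. The three cases $\GL_2(\F_p)$, $C^+_{ns}(p)$ and $C^+_s(p)$ are already handled by Lemmas \ref{lem:divisibility}, \ref{lem:divisibility2} and \ref{lem:divisibility3}; the bulk of the new work concerns the six remaining groups.

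For the CM cases $G_{00}(p),G_{01}(p),G_{10}(p)$, the plan is to do a direct matrix calculation, splitting $\F_p^2\setminus\{(0,0)\}$ into the points with $y=0$ and those with $y\ne 0$. For $G_{00}(p)$, varying $b\in\F_p$ and $a\in\F_p^\times$ shows that the $y\ne 0$ stratum is a single orbit of size $p(p-1)$, while the $y=0$ locus is a single orbit of size $p-1$. For $G_{01}(p)$ and $G_{10}(p)$, one uses that $a$ ranges over the squares $(\F_p^\times)^2$; since every $p\in\{3,7,11,19,43,67,163\}$ is $\equiv 3\!\pmod 4$, the element $-1$ is a non-square, so $\pm(\F_p^\times)^2=\F_p^\times$, which distinguishes the two groups. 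A check that the orbit sizes sum to $p^2-1$ serves as a sanity condition.

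For $G^3(p)$ with $p\equiv 1\!\pmod 3$, the plan is to exploit the index-$3$ inclusion $G^3(p)\subset C^+_s(p)$. Parametrising pairs $(x,y)$ with both coordinates nonzero by the ratio $t=y/x\in\F_p^\times$, the diagonal part sends $t\mapsto b^3 t$ and the $T$-twisted part sends $t\mapsto 1/(b^3 t)$; thus orbits are indexed by the equivalence classes $\{\mu,\mu^{-1}\}$ in $\F_p^\times/(\F_p^\times)^3$. Since $p\equiv 1\!\pmod 3$, this group has order $3$, and the self-inverse class (cubes) yields an orbit of size $(p-1)^2/3$, while the two non-trivial classes merge into a single orbit of size $2(p-1)^2/3$; the axes contribute a single orbit of size $2(p-1)$ as in the $C^+_s$ case. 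Analogously, for $G_0(p)$ with $p\equiv 2\!\pmod 3$, one identifies $C_{ns}(p)\simeq\F_{p^2}^\times$ acting simply transitively on $\F_p^2\setminus\{(0,0)\}$, so that $(C_{ns}(p))^3$-orbits correspond to cosets in $\F_{p^2}^\times/(\F_{p^2}^\times)^3\simeq\Z/3\Z$. The element $J$ acts by Frobenius $x\mapsto x^p$, and since $p\equiv 2\!\pmod 3$ it inverts this $\Z/3\Z$, fixing the trivial coset and swapping the other two; this gives orbits of sizes $(p^2-1)/3$ and $2(p^2-1)/3$.

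The mildly delicate step is the index-$3$ analysis for $G^3(p)$ and $G_0(p)$, where one must be careful that the action of the coset representatives ($T$, respectively $J$) is compatible with the cube-class partition; the key facts are that $JM_\epsilon(a,b)J^{-1}=M_\epsilon(a,-b)$ corresponds to Frobenius on $\F_{p^2}$, and that the congruence class of $p$ modulo $3$ controls whether inversion is trivial or not on $\Z/3\Z$. Once these actions are written down, the realisability claim (that every listed degree actually occurs) is immediate, since each candidate orbit is manifestly non-empty.
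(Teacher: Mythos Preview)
Your proposal is correct and, in several places, cleaner than the paper's own argument. For the first three rows you agree with the paper in citing Lemmas \ref{lem:divisibility}--\ref{lem:divisibility3}. For $G_{00},G_{01},G_{10}$ you give an explicit hand computation where the paper simply says ``direct computation'' and points to a \texttt{Magma} script; your observation that $-1$ is a non-square for each prime in $\{3,7,11,19,43,67,163\}$ is exactly what is needed to see that $\pm(\F_p^\times)^2=\F_p^\times$ and hence to sort out the orbit structure of $G_{01}$ versus $G_{10}$. For $G^3(p)$ and $G_0(p)$ the approaches genuinely diverge. The paper computes stabilisers of individual vectors, quoting \cite[Lemmas 6.6 and 7.4]{loz}, and in the $G_0(p)$ case counts the matrices $J\cdot M_\epsilon(a,b)^3$ of determinant $-1$ via the kernel of the determinant map on $C_{ns}(p)$. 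You instead pass to the quotient $\F_p^\times/(\F_p^\times)^3$ (for $G^3(p)$), or identify $\F_p^2\setminus\{0\}$ with $\F_{p^2}^\times$ and work modulo cubes (for $G_0(p)$), noting that $T$ and $J$ act by inversion on the resulting $\Z/3\Z$. Your route is more structural and makes the orbit pattern $\{0\}\sqcup\{\pm 1\}$ on $\Z/3\Z$ transparent; the paper's route avoids the identification $C_{ns}(p)\simeq\F_{p^2}^\times$ but pays for this with a longer eigenvalue-counting argument. One small point worth making explicit in your write-up: when you pass to the ratio $t=y/x$ for $G^3(p)$ you are really computing orbits on the set of ratios, and you should record that the diagonal subgroup $\{D(a,ab^3)\}$ already acts transitively on each fibre of the ratio map (indeed on each cube-class of ratios), so that $G^3(p)$-orbits on $(\F_p^\times)^2$ biject with orbits on the ratio space; this is immediate from $D(a,a)(x,y)=(ax,ay)$ but deserves a sentence.
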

\begin{remark}
The groups $G_{00}(p)$, $G_{01}(p)$, $G_{10}(p)$, $G^3(p)$ and $G_0(p)$ are as defined in Theorems \ref{tm_nonCM} and \ref{tm_CM}.
\end{remark}
\begin{proof}
The first table follows from Lemmas \ref{lem:divisibility},  \ref{lem:divisibility2} and  \ref{lem:divisibility3}. The second table follows from a direct \href{http://matematicas.uam.es/~enrique.gonzalez.jimenez/research/tables/growth/th5_6.txt}{\color{blue} computation}, as one has to consider only a small number of finite group actions.

Now let $G_E(p)=G^3(p)$. 
Then it is enough to consider the lengths orbits of the generators of the $p+1$ subgroups of $\F_p^2$. Equivalently, it is enough to know the size of the stabilizers of each of these elements. The stabilizer of $(1,0)$ is by \cite[Lemma 6.6 (1)]{loz} the subgroup of $G^3(p)$ given by
$$
H_{(1,0)}=\left\{D(1,b):b\in \F_p^\times\right\}.
$$
Obviously, the stabilizer is of size $(p-1)/3$, hence the size of the orbit of $(1,0)$ is $\frac{|G^3(p)|}{(p-1)/3}=2(p-1)$. Using \cite[Lemma 6.6]{loz} and the same argumentation, we conclude that the length of the orbit of $(0,1)$ is also $2(p-1)$. The stabilizer of any element of the form $(1,x)$ for $x\in \F_p^\times$ in $C_s^+(p)$ is by \cite[Lemma 6.6 (3)]{loz}
$$
H_{(1,x)}=\left\{ D(1,1),D(x,x^{-1})\right \}.
$$
Thus the stabilizer of $(1,x)$ in $G^3(p)$ is $H\cap G^3(p)$; this set obviously contains either $1$ or $2$ elements. It contains $2$ elements, by the definition of $G^3(p)$, if and only if $x^2$ is a cube in $\F_p^\times$. Obviously there exist both $x$'s that satisfy this condition (take for example $x$ to be a cube itself) and $x$ that do not. Hence there exist vectors $(1,x)$ which are in an orbit of length $\frac{(p-1)^2}3$ and those that are in an orbit of length $\frac{2(p-1)^2}3$. In this case $\F_p^2$ decomposes into orbits of lengths $1,\frac{(p-1)^2}3,\frac{2(p-1)^2}3$.

Now consider the case $G_E(p)=G_0(p)$. It follows from \cite[Lemma 7.4]{loz}, that the stabilizer of each element is of order $1$ or $2$. If there exists an element whose stabilizer is of order 1, then $\F_p^2$ decomposes into orbits of length $1,\frac{p^2-1}{3},\frac{2(p^2-1)}{3}$, and if there does not exist such an element, then $\F_p^2$ decomposes in orbits of length $1,\frac{p^2-1}{3},\frac{p^2-1}{3},\frac{p^2-1}{3}$ under the action of $G_E(p)$.

By \cite[Lemma 7.4]{loz}, the elements of $C_{ns}^+(p)$ that fix any non-trivial vector (or equivalently have an eigenvalue 1) are the identity and the matrices $J\cdot M_{\epsilon}(a,b) $ such that $a^2-\epsilon b^2=1$, or equivalently those such that  $\det J\cdot M_{\epsilon}(a,b) =-1$. It is easy to see that there are $p+1$ such matrices in $C_{ns}^+(p)$, each fixing each element in a different subgroup of $(\F_p)^2$.

We now study the number of such elements in $G_0(p)$. Clearly, none of them are of the form $M_{\epsilon}(a,b)^3$, so all of them must be of the form $J\cdot M_{\epsilon}(a,b)^3$.

Now, $J\cdot M_{\epsilon}(a,b)^3$ has determinant $-1$ if and only if $M_{\epsilon}(a,b)^3$ has determinant 1. Since $1$ is the only third root of unity in $\F_p$, it follows that $\det (M_{\epsilon}(a,b)^3)=1$ if and only if $\det M_{\epsilon}(a,b)=1$.
The subgroup $\mathcal M$ of $C_{ns}(p)$ of matrices $M_{\epsilon}(a,b)$ of determinant 1 is the kernel of the determinant map $\det: C_{ns}(p)\rightarrow \F_p^\times$, which is surjective. Hence, by the first isomorphism theorem, $\# \mathcal M=p+1$. As $\#\mathcal M$ is divisible by 3, it follows that  $\mathcal M^3=\{M_{\epsilon}(a,b)^3:(a,b)\in(\F_p)^2,(a,b)\ne (0,0)\}$ has cardinality $\frac{p+1}{3}$, as claimed.

Hence there are  $\frac{p+1}{3}$ non-identity matrices that each fix every element in exactly one subgroup of $(\F_p)^2$ of order $p$. As there are $p+1$ subgroups of $(\F_p)^2$ of order $p$, it follows that there will be $\frac{2(p+1)}{3}$ subgroups such that each of their non-trivial elements has trivial stabilizer, proving the claim.
\end{proof}
We now state the main result of this secion:

\begin{tm}\label{exact_degree}
Let $E/\Q$ be an elliptic curve, p a prime and $P$ a point of order $p$ in $E$. Then all of the cases in the table below occur for $p\leq 13$ or $p=37$, and they are the only ones possible. The degrees in the table below with an asterisk occur only when $E$ has CM.
$$
\begin{array}{|c|c|}
\hline
p & [\Q(P):\Q]\\
\hline
2 & 1,2,3\\
\hline
3 & 1,2,3,4,6,8\\
\hline
5 & 1,2,4,5,8,10,16,20,24\\
\hline
7 & 1,2,3,6,7,9,12,14,18,21,24^*,36,42,48\\
\hline
11 & 5,10,20^*,{ 40}^*,55,{ 80}^*,100^*,110,120\\
\hline
13 & 3,4,6,12,{ 24}^*,39,{ 48}^*,52,72,78,96,{ 144}^*,156,168\\

\hline
37 & 12,36,{ 72}^*,444,{ 1296}^*,1332, 1368\\
\hline
\end{array}
$$
For all other $p$, the possibilities for $[\Q(P):\Q]$ are as is given below. The degrees in equations \eqref{eq8} - \eqref{eq10} occur only for CM elliptic curves $E/\Q$. Furthermore the degrees in equations \eqref{eq10} occur only for elliptic curves with $j$-invariant $0$. If Conjecture \ref{uniformity_conj} is true, then the degrees in equations \eqref{eq11} also occur only for elliptic curves with $j$-invariant $0$.
\begin{align}
p^2-1 \quad \quad \quad \quad \quad \quad \quad \quad \quad \quad & \text{for all $p$},  \\
8,16,32^*,136,256^*,272,288    \quad\quad \quad\quad \quad & \text{for }p=17 \\
(p-1)/2,\,\,p-1,\,\,p(p-1)/2,\,\,\,p(p-1) \quad \quad  & \text{if }p \in \{19,43,67,163\}, \label{eq8} \\
2(p-1),\,\,\,\,(p-1)^2 \quad \quad \quad \quad \quad \quad \quad   & \text{if }p\equiv 1\!\!\!\!\! \pmod{3}\text{ or } \left( \frac{-D}{p}\right )=1 \text{ for any }D\in\mathcal{CM},  \\
(p-1)^2/3,\,\,\,\,2(p-1)^2/3  \quad \quad \quad \quad \quad \quad & \text{if }p\equiv 4,7\!\!\!\!\! \pmod{9}, \label{eq10}\\
(p^2-1)/3,\,\,\,\,2(p^2-1)/3  \quad \quad \quad \quad \quad \quad & \text{if }p\equiv 2,5\!\!\!\!\! \pmod{9}, \label{eq11}
\end{align}
where $\mathcal{CM}=\{1,2,7,11,19,43,67,163\}$.

Apart from the cases above that have been proven to appear, the only other options that might be possible are:
\begin{equation}\label{ser_imp}
\begin{array}{ccl}
(p^2-1)/3,\,\,\,\,2(p^2-1)/3  & & \text{if }p\equiv 8\!\!\!\!\!  \pmod{9}.\\
\end{array}
\end{equation}
\end{tm}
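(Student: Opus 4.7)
The proof naturally splits along the dichotomy between the small primes, where a finite table of possible images $G_E(p)$ is known explicitly, and the large primes, where one must appeal to Theorems \ref{tm_nonCM} and \ref{tm_CM} together with the orbit computation of Theorem \ref{dQ}. My plan is to do both in the same framework: for a point $P\in E[p]$ of order $p$, identifying $E[p]\simeq \F_p^2$ via a chosen basis, the degree $[\Q(P):\Q]$ equals the size of the orbit of the vector corresponding to $P$ under the action of $G_E(p)\subseteq \GL_2(\F_p)$. So in every case the task reduces to listing the possible $G_E(p)$'s and computing their orbit lengths on $\F_p^2\setminus\{(0,0)\}$.

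For $p\in\{2,3,5,7,11,13,17,37\}$, I would go case by case. For $p\leq 11$ and $p\in\{17,37\}$, combine the non-CM images from Tables \ref{tableSutherland} and \ref{tableSutherland2} (by Theorem \ref{tm_nonCM}) with the CM images from Theorem \ref{tm_CM}; the degrees marked by an asterisk (and the footnoted entries) should come out to arise only from CM curves, which I would verify by inspecting which groups in the non-CM tables do or do not contain orbits of the relevant length. For $p=13$, the list is exactly Lemma \ref{dQ23}, which is itself built from Proposition \ref{prop13}. In each case the orbit-length computation is a routine finite-group calculation in \texttt{Magma}, and the explicit CM lists produced for Lemma \ref{dQ23} and Theorem \ref{dQ} give us all of the CM-only degrees simultaneously.

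For the remaining primes $p\geq 17$, $p\neq 37$, the possible images $G_E(p)$ are, by Theorems \ref{tm_nonCM} and \ref{tm_CM}, confined to: $\GL_2(\F_p)$; $C^+_s(p)$, $C^+_{ns}(p)$; in the $p\equiv 2\!\!\pmod 3$ case the subgroup $G_0(p)\subseteq C^+_{ns}(p)$ (for non-CM curves), and in the $j_E=0$ case additionally $G^3(p)$ or $G_0(p)$; and finally for CM curves with $D=p\in\{19,43,67,163\}$ the Borel-type groups $G_{00}(p),G_{01}(p),G_{10}(p)$. The corresponding orbit lengths are given directly by Lemmas \ref{lem:divisibility}, \ref{lem:divisibility2}, \ref{lem:divisibility3} and Theorem \ref{dQ}. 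Packaging these results produces exactly the list in \eqref{eq8}--\eqref{eq11}, where the ``conditions'' on $p$ record precisely when each image actually occurs: $C^+_s(p)$ for CM curves requires $\legendre{-D}{p}=1$ for some $D\in\mathcal{CM}$ (or $p\equiv 1\!\pmod 3$ from the $j=0$ split case), the $G^3(p)$ case requires $p\equiv 4,7\!\pmod 9$, and $G_0(p)$ requires $p\equiv 2,5\!\pmod 9$ among the CM curves. A short check that each listed image is realised by an actual elliptic curve over $\Q$ (using the CM families and the explicit non-CM examples behind Tables \ref{tableSutherland}--\ref{tableSutherland2}) completes the ``all occur'' part.

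The main obstacle, and the only reason the theorem is not fully unconditional, is the $p\equiv 8\!\pmod 9$ case of \eqref{ser_imp}: for such $p$ the group $G_0(p)$ appears in Theorem \ref{tm_nonCM}(iv)(b) as a possible image for non-CM curves, but there is no known example of a non-CM $E/\Q$ realising it, and Conjecture \ref{serre_conj} predicts there are none. Thus the orbit calculation shows these degrees \emph{could} occur, but whether they actually do depends on ruling out the existence of such $E$. This is exactly the gap left open in the statement. All the other entries are handled by the finite computation above, so the bulk of the write-up will consist of assembling the tables of images and reading off orbit sizes, with the genuine content lying in the ``only if'' direction, i.e. in Theorems \ref{tm_nonCM}, \ref{tm_CM}, Proposition \ref{prop13}, and Theorem \ref{dQ} which we invoke.
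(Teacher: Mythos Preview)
Your proposal is correct and follows essentially the same approach as the paper: the paper's proof simply states that the theorem follows directly from the results of Section~\ref{sec:RQD} (Lemmas~\ref{lem:divisibility}--\ref{dQ23}, Proposition~\ref{prop13}, Theorem~\ref{dQ}) combined with Theorems~\ref{tm_nonCM} and~\ref{tm_CM} and direct orbit-length computations in the relevant finite groups. Your write-up is a faithful expansion of exactly this, including the correct identification of the $p\equiv 8\pmod 9$ case as the sole conditional point.
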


\begin{proof}
The theorem follows directly from the results of this section, combined with Theorems \ref{tm_nonCM} and \ref{tm_CM}, and from direct \href{http://matematicas.uam.es/~enrique.gonzalez.jimenez/research/tables/growth/th5_7CM.txt}{\color{blue} calculations} of lengths of orbits of fixed finite groups.
\end{proof}

\begin{remark}
  Note that $3$ would be redundant in the set $\mathcal{CM}$, although there exists an elliptic curve with CM by an order of an imaginary quadratic field of discriminant $-3$, as $p\equiv 1 \!\pmod 3$ immediately implies $\left( \frac{-D}{p}\right )=-1$. That is why $3$  has been left out from the set $\mathcal{CM}$.
\end{remark}

\begin{remark}
If Conjecture \ref{uniformity_conj} is true, then the cases \eqref{ser_imp} do not occur.
\end{remark}

\section{Points of prime order on elliptic curves $E/\Q$ over number fields of degree $d$}
\label{sec:RQD2}
Lozano-Robledo \cite{loz} denotes by $S_\Q(d)$ the set of primes $p$ for which there exists a number field $K$ of degree $\le d$ and an elliptic curve $E/\Q$ such that the $p$ divides the order of $E(K)_{\tors}$. He determined the sets $S_\Q(d)$ for $d\le 42$. To more easily state his results, denote by $S^*_{\Q}(d)=S_{\Q}(d)\smallsetminus S_{\Q}(d-1)$, then the table below shows the necessary data to recover $S_\Q(d)$ for $d\le 42$:
$$
\begin{array}{|c|c|c|c|c|c|c|c|c|c|}
\hline
d & 1 &3 &  5 &  8&  9&  12& 21 & 33 &\text{ other } d\leq 42\\
\hline
S^*_{\Q}(d) & 2,3,5,7 & 13 & 11 & 17 & 19 & 37 & 43 & 67 & \emptyset\\
\hline
\end{array}
$$
In this section, we obtain results about a set that will give us more information:  $R_\Q(d)$. Recall from the introduction that we denote by $R_\Q(d)$ the set of primes $p$ for which there exists a number field $K$ of degree exactly $d$ and an elliptic curve $E/\Q$ such that the $p$ divides the order of $E(K)_{\tors}$. Obviously, knowing $R_\Q(d)$ immediately gives $S_\Q(d)$, but not vice versa. We obtain our results as more or less direct consequences of Theorem \ref{exact_degree}.

The first obvious observation is that for all positive integers $n,d$, if $p\in R_\Q(d)$, then $p\in R_\Q(nd)$. Obviously, we have $2,3,5,7 \in R_\Q(d)$ for all positive integers $d$.

\
The following corollary summarizes the results that we obtain.

\begin{corollary}\label{RQD}
Let $\mathcal{CM}=\{1,2,7,11,19,43,67,163\}$. The following holds:
\begin{romanenum}
  \item $11 \in R_\Q(d)$ if and only if $5|d$.
  \item $13 \in R_\Q(d)$ if and only if $3|d$ or $4|d$.
  \item $17 \in R_\Q(d)$ if and only if $8|d$.
  \item $37 \in R_\Q(d)$ if and only if $12|d$.
  \item For $p=19, 43, 67, 163$, it holds that $p\in R_\Q(d)$ if and only if $\frac{p-1}{2}|d$.
  \item For $p\geq 23,$ $p\neq  37,43, 67, 163$:
  \begin{alphenum}
          \item If $p\equiv 1 \!\pmod 3$, then it holds that $p\in R_\Q(d)$ if and only if $2(p-1)|d$.
          \item If $p\equiv 2 \!\pmod 3$ and $\left(\frac{-D}{p}\right)=1$ for at least one of the values $D\in \mathcal{CM}$ then $p\in R_\Q(d)$ if and only if $2(p-1)|d$.
          \item If $p\equiv 2,5 \!\pmod 9$ and $\left(\frac{-D}{p}\right)=-1$ for all the values $D\in \mathcal{CM}$ then $p\in R_\Q(d)$ if and only if $(p^2-1)/3|d$.
          \item Let $p\equiv 8 \!\pmod 9$ and $\left(\frac{-D}{p}\right)=-1$ for all the values $D \in \mathcal{CM}$. Then if $p\in R_\Q(d)$, then it follows that $\frac{p^2-1}3\mid d$. On the other hand if $p^2-1|d$, then it follows that $p\in R_\Q(d)$.
  \end{alphenum}
\end{romanenum}
\end{corollary}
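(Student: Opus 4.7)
The approach is to reduce the corollary to the classification of possible degrees $[\Q(P):\Q]$ provided by Theorem \ref{exact_degree}. First I would establish the basic equivalence: $p\in R_\Q(d)$ if and only if there exist an elliptic curve $E/\Q$ and a point $P\in E$ of order $p$ with $[\Q(P):\Q]\mid d$. The forward direction is immediate since $\Q(P)\subseteq K$ whenever $P\in E(K)$; for the converse, given such a $P$ with $[\Q(P):\Q]=e\mid d$, one chooses any number field $K$ with $\Q(P)\subseteq K\subseteq\overline{\Q}$ and $[K:\Q(P)]=d/e$, so $[K:\Q]=d$ and $P\in E(K)$, witnessing $p\in R_\Q(d)$.

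Granting this reduction, each item of the corollary becomes a direct inspection of the list of achievable degrees for the prime $p$ in question: identify the set $\mathcal{D}_p$ of unconditionally achievable $[\Q(P):\Q]$, and find a minimal subset $\mathcal{G}_p\subseteq\mathcal{D}_p$ such that every element of $\mathcal{D}_p$ is a multiple of some element of $\mathcal{G}_p$; then $p\in R_\Q(d)$ iff some $m\in\mathcal{G}_p$ divides $d$. For parts (i), (iii), (iv), (v), (vi)(a), (vi)(b) and (vi)(c) the set $\mathcal{G}_p$ is a singleton, equal respectively to $5$, $8$, $12$, $(p-1)/2$, $2(p-1)$, $2(p-1)$, $(p^2-1)/3$, and the verification that every other degree from Theorem \ref{exact_degree} is a multiple of the generator is a short elementary divisibility check. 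In (vi)(a)--(b) a little care is needed when the additional degrees $(p-1)^2/3$, $2(p-1)^2/3$, $(p^2-1)/3$, $2(p^2-1)/3$ arise (for $p\equiv 4,7\pmod 9$ or $p\equiv 2,5\pmod 9$): writing $p-1$ or $p+1$ modulo $9$ and using that $p$ is odd forces the parity that makes $(p-1)/6$ or $(p+1)/6$ integral, so $2(p-1)$ still divides each of these candidates. For $p=13$ (part (ii)) no single generator works, but each entry of $\mathcal{D}_{13}=\{3,4,6,12,24,39,48,52,72,78,96,144,156,168\}$ is divisible by $3$ or $4$, and both $3$ and $4$ lie in $\mathcal{D}_{13}$, so $\mathcal{G}_{13}=\{3,4\}$ and the condition becomes disjunctive.

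The one delicate case, and the principal obstacle, is (vi)(d). When $p\equiv 8\pmod 9$ and $\left(\frac{-D}{p}\right)=-1$ for every $D\in\mathcal{CM}$, Theorem \ref{exact_degree} certifies $p^2-1$ as an achievable degree (via $\GL_2(\F_p)$ or $C^+_{ns}(p)$), whereas $(p^2-1)/3$ and $2(p^2-1)/3$ appear only as \emph{candidate} degrees coming from the subgroup $G_0(p)$: the only relevant CM case, $(D,f)=(3,1)$ with $p\equiv 8\pmod 9$, produces $C^+_{ns}(p)$ rather than $G_0(p)$ by Theorem \ref{tm_CM}(iii)(b), and the occurrence of $G_0(p)$ for non-CM curves in this range is only conjecturally excluded (by Conjecture \ref{serre_conj}). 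Hence the most we can say unconditionally is that if $p\in R_\Q(d)$ then some element of $\{(p^2-1)/3,\,2(p^2-1)/3,\,p^2-1\}$ divides $d$, and since all three are multiples of $(p^2-1)/3$ this yields the necessary condition $(p^2-1)/3\mid d$; for the converse only $p^2-1$ is known to be attained, giving the weaker sufficient condition $p^2-1\mid d$. This inability to confirm or refute $(p^2-1)/3$ as a witness degree is precisely the gap that prevents a clean iff statement in (vi)(d); everything else in the corollary is a routine repackaging of Theorem \ref{exact_degree}.
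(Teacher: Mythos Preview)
Your proposal is correct and follows essentially the same approach as the paper, whose proof reads simply ``This follows directly from Theorem \ref{exact_degree} and direct computations.'' You have spelled out the content of those direct computations: the basic equivalence $p\in R_\Q(d)\iff$ some achievable degree $[\Q(P):\Q]$ divides $d$, followed by the case-by-case identification of the minimal generating set $\mathcal{G}_p$ of achievable degrees, including the careful handling of the conditional case (vi)(d).
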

\begin{proof}
This follows directly from Theorem \ref{exact_degree} and direct computations.
\end{proof}

\begin{remark}
Note that the only case where we do not have one necessary and sufficient condition for $p$ to be in $R_\Q(d)$ is (vi) (d). This is because of the possibility that $G_E(p)=G_0(p)$ in that case. If Conjecture \ref{uniformity_conj} is true, then $G_E(p)=G_0(p)$ will be impossible for all $p>13$, and we will have $p\in R_\Q(d)$ if and only if $p^2-1|d$ in this case.

Of course, it is a natural question to ask what is the density of primes satisfying the assumptions of (vi) (d).  Notice that $ \left(\frac{-4}{p}\right)=\left(\frac{-1}{p}\right)=-1$ is true if and only if $p\equiv 3 \!\pmod 4$. The condition $\left(\frac{-2}{p}\right)=-1$ is then equivalent to $\left(\frac{2}{p}\right)=1$, which implies that $p\equiv 7 \!\pmod 8$.

Now, by quadratic reciprocity, $$\left(\frac{-D}{p}\right)=\left(\frac{-1}{p}\right)\left(\frac{D}{p}\right)=(-1)\cdot(-1)\cdot\left(\frac{p}{D}\right)=\left(\frac{p}{D}\right),$$ for all $D\in \mathcal{CM}-\{1,2\}$. For each of these $D$, the condition $\left(\frac{p}{D}\right)=-1$ is satisfied by half of the residue classes modulo $D$. Combining, by the Chinese remainder theorem, all these conditions together with $p\equiv 7 \!\pmod 8$ and $p\equiv 8 \!\pmod 9$ we obtain that the assumptions of (vi) (d) are satisfied by $\frac{1}{4}\cdot \frac{1}{6} \cdot\left( \frac{1}{2}\right)^6=\frac{1}{1536}$ of the residue classes modulo $8\cdot 9 \cdot 7\cdot11\cdot19\cdot43\cdot67\cdot 163$. As the primes are evenly distributed in residue classes, we conclude that a set of natural density $\frac{1}{1536}$ of primes satisfies the assumptions of (vi) (d).
\end{remark}

\begin{remark}
Let $p$ be a prime and denote by $\mathcal D_p$ the minimal subset of the possible degrees $[\Q(P):\Q]$ for any point $P$ of order $p$ in any elliptic curve $E/\Q$, such that $[\Q(P):\Q]$ is divisible by some $d\in \mathcal D_p$. Then
$$
R_\Q(d)=\{\mbox{$p$ prime}\,:\, \exists\, a \in \mathcal D_p\,\,\mbox{such that}\,\, a|d\}.
$$
\end{remark}

\begin{corollary}
Let $p=2$ or $p>5$ be a prime. Then $R_{\Q}(p)=R_\Q(1)=\{2,3,5,7\}$.
\end{corollary}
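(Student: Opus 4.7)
The plan is to combine Corollary \ref{RQD} with Mazur's classification of $\Phi(1)$ to establish both inclusions separately.

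For the containment $\{2,3,5,7\} \subseteq R_\Q(p)$, I would use that Mazur's theorem gives $R_\Q(1)=\{2,3,5,7\}$, and the inclusion $R_\Q(1)\subseteq R_\Q(d)$ is automatic for every $d\geq 1$: if $E/\Q$ has a point of order $q\in\{2,3,5,7\}$ defined over $\Q$, then that point remains defined over any number field $K$ of degree $d$, so choose for $K$ any degree-$p$ extension of $\Q$. This gives the ``easy'' half and uses no new input.

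For the reverse inclusion I would split on whether $p=2$ or $p\geq 7$. When $p=2$, the explicit determination of $\Phi_\Q(2)$ recalled in Section \ref{sec:auxiliary} shows directly that the only primes dividing the order of any group in $\Phi_\Q(2)$ are $2,3,5,7$, so $R_\Q(2)=\{2,3,5,7\}$. When $p\geq 7$ is prime, I would take any prime $q\in R_\Q(p)$ with $q>7$ and walk through the cases of Corollary \ref{RQD}, checking in each that the stated divisibility condition on $d=p$ fails: in (i)--(iv) the required divisors $5,3$ or $4,8,12$ of $p$ cannot divide a prime $p\geq 7$; in (v) the condition $(q-1)/2\mid p$ would force $(q-1)/2\in\{1,p\}$, but the possible values $9,21,33,81$ are neither $1$ nor prime; and in (vi) all four sub-cases require $p$ to be divisible either by $2(q-1)\geq 44$ or by $(q^2-1)/3\geq 176$, which is impossible for a prime.

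The argument is really a routine case check once Corollary \ref{RQD} is in hand, so there is no substantive obstacle. The only mildly delicate point is case (vi)(d), in which Corollary \ref{RQD} gives only a necessary condition for $q\in R_\Q(d)$; however, the necessary condition $\tfrac{q^2-1}{3}\mid p$ alone already forces the contradiction, so the gap between necessary and sufficient in that case causes no trouble here. The two inclusions together yield $R_\Q(p)=R_\Q(1)=\{2,3,5,7\}$.
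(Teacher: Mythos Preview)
Your argument is correct and is exactly the intended one: the paper states this corollary without proof, treating it as an immediate consequence of Corollary~\ref{RQD}, and your case analysis spells out precisely that deduction. The one small redundancy is your separate treatment of $p=2$ via $\Phi_\Q(2)$; the divisibility conditions in Corollary~\ref{RQD} already rule out every prime $q>7$ from $R_\Q(2)$ by the same arithmetic (none of $5,3,4,8,12,9,21,33,81,2(q-1),(q^2-1)/3$ divides $2$), so no appeal to the quadratic classification is needed, but your route is equally valid.
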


In this context we denote by
$$
R^*_{\Q}(d)=R_{\Q}(d)\smallsetminus \bigcup_{
\substack{d'|d \\ d'\neq d}
} R_{\Q}(d')
$$

The table below shows the \href{http://matematicas.uam.es/~enrique.gonzalez.jimenez/research/tables/growth/RQ.txt}{\color{blue}data} needed to recover $R_\Q(d)$ for $d\le 100$:
$$
\begin{array}{|c|c|c|c|c|c|c|c|c|c|c|c|c|c|c|c|}
\hline
d & 1 &3,4 &  5 &  8&  9&  12& 21 & 33 & 44 & 56 & 60 & 80 & 81 &92 &\text{ other } d\leq 100\\
\hline
R^*_{\Q}(d) & 2,3,5,7 & 13 & 11 & 17 & 19 & 37 & 43 & 67 & 23 & 29 & 31 & 41 & 163 & 47 & \emptyset \\
\hline
\end{array}
$$
Note that the smallest prime that satisfies the condition of Corollary \ref{RQD} (vi)(d) is $p=3167$. Therefore, the smallest $d$ where we do not have an unconditional determination of $R_\Q(d)$ is $d=3343296$.

\section{Torsion groups of elliptic curves $E/\Q$ over number fields of degree $d$}
\label{sec3}

In this section we want to say things about the possibilities for the whole torsion group of elliptic curves $E/\Q$ over number fields, not just about points of prime degree, as in the previous section.

We write $E(K)[p^\infty]$ to denote the $p$-primary torsion subgroup of $E(K)_{\tors}$, that is, the $p$-Sylow subgroup of $E(K)$.

We will prove results about elliptic curves in prime degree fields. We first need to study the case of degree $7$ separately.

Recall from the introduction that $\Phi_{\Q}(d)\subseteq \Phi(d)$ is the set of possible isomorphism classes of groups $E(K)_{\tors}$, where $K$ runs through all number fields $K$ of degree $d$ and $E$ runs through all elliptic curves defined over $\Q$.

\begin{proposition}
	\label{7-torsion}
    $\Phi_\Q(7)=\Phi(1)$.
	\end{proposition}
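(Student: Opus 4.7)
The plan splits into two inclusions. The direction $\Phi(1) \subseteq \Phi_\Q(7)$ is easy: for each $G \in \Phi(1)$, pick any $E/\Q$ with $E(\Q)_{\tors} \simeq G$ and any ``generic'' degree-$7$ extension $K/\Q$; the non-growth argument below then gives $E(K)_{\tors} = G$. The substantive task is $\Phi_\Q(7) \subseteq \Phi(1)$, so let $E/\Q$ and $K$ be of degree $7$. By Corollary \ref{RQD}, $R_\Q(7) = \{2,3,5,7\}$, so only these primes can divide $|E(K)_{\tors}|$, and I analyze each prime in turn.

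For $p \in \{2,3,5\}$ and any $P \in E$ of order $p^n$, iterating Proposition \ref{prop_pdiv} (using Proposition \ref{2growth} for the base case $p=2$) shows that $[\Q(P):\Q]$ has only prime factors $\le p\le 5$; since it must also divide $[K:\Q]=7$, it equals $1$, and so $P \in E(\Q)$. Hence $E(K)[p^\infty] = E(\Q)[p^\infty]$ for $p \in \{2,3,5\}$. For $p=7$, the Weil pairing forces $\Q(\zeta_7) \subseteq \Q(E[7])$; since $[\Q(\zeta_7):\Q]=6$ does not divide $7$ we have $\zeta_7 \notin K$, so $E[7] \not\subseteq E(K)$ and $E(K)[7]$ is cyclic of order at most $7$.

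The crucial step is to rule out $49$-torsion in $E(K)$. If $P \in E(K)$ had order $49$, Mazur's theorem would give $[\Q(P):\Q]=7$, and as $K/\Q$ has no intermediate fields, $\Q(\langle P\rangle) \in \{\Q,K\}$; the first option yields a $\Q$-rational $49$-isogeny, forbidden by Theorem \ref{tm-isog}. Hence the Galois orbit of $\langle P \rangle$ has length $7$. A case analysis on the $7$-subgroup $\langle 7P\rangle$---either $\langle 7P\rangle$ itself is $\Q$-rational, or its Galois orbit has length $7$ and then the remaining $8$th cyclic subgroup of $E[7]$ is $\Q$-rational---shows in either case that $E$ admits a $\Q$-rational $7$-isogeny, so $G_E(7)$ lies in a Borel subgroup. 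I then go through, with \texttt{Magma}, all $G_E(49) \subseteq \GL_2(\Z/49\Z)$ compatible with the allowed $G_E(7)$ of Theorems \ref{tm_nonCM} and \ref{tm_CM} (taking into account Serre's open image theorem in the non-CM case and the explicit CM description), and verify by direct orbit computation that no such group has an orbit of length $7$ on the points of order $49$, yielding the contradiction.

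Finally, to guarantee $E(K)_{\tors} \in \Phi(1)$ when $E(K)[7]=\cC_7$ strictly exceeds $E(\Q)[7]$: the existence of a $7$-torsion point $P$ with $[\Q(P):\Q]=7$ forces $G_E(7)$ to have an orbit of length $7$ on $E[7]\setminus\{O\}$, which a finite-group check shows requires $G_E(7)$ to lie in a Borel with trivial character on the quotient line (and in particular $E$ has a $\Q$-rational $7$-isogeny). If in addition $E(\Q)_{\tors}$ contained a point of order $p \in \{2,3,5\}$, then $E$ would admit a $\Q$-rational $7p$-isogeny; $7p=35$ is excluded by Theorem \ref{tm-isog}, while $7p\in\{14,21\}$ confines $E$ to one of the finitely many non-cuspidal rational points of $X_0(14)$ or $X_0(21)$. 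For each of these curves---all with CM in the $X_0(14)$ case, and checkable one-by-one in the $X_0(21)$ case---Theorem \ref{tm_CM} and a direct orbit computation show $G_E(7)$ has no orbit of length $7$, a contradiction. Thus $E(\Q)_{\tors}$ has no $2$-, $3$-, or $5$-torsion whenever $E(K)[7]\supsetneq E(\Q)[7]$, so $E(\Q)_{\tors}\in\{\{O\},\cC_7\}$ and $E(K)_{\tors}\in\{\{O\},\cC_7\}\subseteq \Phi(1)$. The main obstacle is the two case-by-case orbit computations just outlined, both of which reduce to fixed finite-group calculations handled by \texttt{Magma}.
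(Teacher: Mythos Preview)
Your overall strategy matches the paper's, and your handling of $p\in\{2,3,5\}$ is cleaner than the paper's (which consults the tables of mod-$p$ images directly): using Lemma~\ref{obvious} one sees at once that $[\Q(P):\Q]$ divides $|\GL_2(\Z/p^n\Z)|$, whose prime factors lie in $\{2,3,5\}$, hence is coprime to $7$. (Your phrasing ``prime factors $\le p$'' is slightly off for $p=2$, since $3\mid |\GL_2(\F_2)|$; the correct statement is ``prime factors $\le 5$'', which is what your argument actually needs.)

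There is, however, a genuine gap in your treatment of $49$-torsion. Your claimed outcome of the \texttt{Magma} search---that no subgroup of $\GL_2(\Z/49\Z)$ reducing to an allowed $G_E(7)$ has an orbit of length $7$ on points of order $49$---is false. For $G_E(7)$ conjugate to \texttt{7B.1.1} (which is certainly allowed: it lies in a Borel and has a fixed nonzero vector, consistent with your case analysis), there \emph{do} exist such subgroups. A concrete example is
\[
G=\left\{\begin{pmatrix} a & b \\ 0 & d\end{pmatrix}: a\equiv 1\!\!\pmod 7,\ b\in\Z/49\Z,\ d\in(\Z/49\Z)^\times\right\},
\]
which reduces to \texttt{7B.1.1}, has surjective determinant, no fixed point of order $49$, and sends $(1,0)$ through an orbit of length~$7$. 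The paper's proof finds exactly this phenomenon and supplies the missing step: for every such $G$ (reducing to \texttt{7B.1.1}) with an orbit of length $7$, the stabiliser turns out to be \emph{normal} with $G/\mathrm{Stab}\simeq\cC_7$, so $K=\Q(P)$ is Galois over $\Q$. Then, since $\zeta_7\notin K$, the group $E(K)[7^\infty]$ is cyclic and hence $\Gal(K/\Q)$-stable, giving a $\Q$-rational $49$-isogeny and contradicting Theorem~\ref{tm-isog}. (For \texttt{7B.1.3} your claim is correct: the paper's computation shows no lift has an orbit of length $7$.) Adding this normality-and-Galois argument for the \texttt{7B.1.1} case repairs your proof; the remainder---your reduction of the $E(K)[7]\supsetneq E(\Q)[7]$ case to the non-existence of $14$-, $21$-, or $35$-isogenies with $G_E(7)$ having an orbit of length $7$---is essentially identical to what the paper does.
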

\begin{proof}  Let throughout the proof $E/\Q$ be an elliptic curve and $K$ a number field of degree $7$. First suppose that $E$ has CM. We can see from \cite[Theorem 1.4]{clark-TAMS} (or \cite[\S 4.7]{clark}) that in the CM case, no subgroups occur in degree $7$ which do not occur over $\Q$.

Now assume $E/\Q$ is non-CM. First we show that $E(K)[p^\infty]=E(\Q)[p^\infty]$ for $p\neq 7$ prime. For $p\leq 11$, it follows that $E(K)[p^\infty]=E(\Q)[p^\infty]$ from Table \ref{tableSutherland} in the Appendix (note that by \cite{zyw}, this list of mod $p$ Galois images for $p\leq 11$ is complete). For $p\geq 13$ it follows directly from Corollary \ref{RQD}.

It remains to deal with the case $p=7$. From Table \ref{tableSutherland} we see that $E(\Q)[7]= E(K)[7]$ unless $G_E(7)$ is conjugate in $\GL_2(\F_7)$ to \texttt{7B.1.3}. We claim that if $G_E(7)$ is conjugate in $\GL_2(\F_7)$ to \texttt{7B.1.3}, then $E(\Q)[p]=\{\mathcal O\}$ for all $p\neq 7$. Indeed, such an elliptic curve would have a $7p$-isogeny, and this is possible only for $p=2$ and $p=3$, and for only finitely many elliptic curves, up to $\Qbar$-isomorphism. Note that if $j_E\notin\{0,1728\}$ and $E'$ is a quadratic twist of $E$, then $\pm G_E(n)=\pm G_{E'}(n)$ for any integer $n\ge 3$.  We explicitly \href{http://matematicas.uam.es/~enrique.gonzalez.jimenez/research/tables/growth/prop7_1a.txt}{\color{blue} check} that no elliptic curve with a $14$-isogeny or a $21$-isogeny has $G_E(7)$ conjugate in $\GL_2(\F_7)$ to \texttt{7B.1.3}.

It remains to prove that $E(K)_{\tors}$ contains no points of order 49.  Looking at Table \ref{tableSutherland} we observe that a necessary condition for $E(K)_{\tors}$ to contain a point of order $7$ is that $G_E(7)$ is conjugate in $\GL_2(\F_7)$ to \texttt{7B.1.1} or \texttt{7B.1.3}. Assume that there exists a number field $K$ of degree $7$ such that $E(K)[49]=\langle P \rangle \simeq \cC_{49}$. Then $G_E(49)$ satisfies:
$$
G_E(49)\equiv\, G_E(7) \,\,(\mbox{mod $7$})\qquad\mbox{and}\qquad [G_E(49):H_P]=7,
$$
where $H_P$ is the image in $\GL_2(\Z/49\Z)$ of the subgroup $\mathcal H_P\subseteq \Gal(\Q(E[49])/\Q)$ such that $\Q(P)=\Q(E[49])^{\mathcal H_P}$.

Note that in general we do not have an explicit description of possible images $G_E(49)$, but using \texttt{Magma} we can run through all subgroups of $\GL_2(\Z/49\Z)$ that reduce to $G_E(7)$.

First assume that $G_E(7)$ is conjugate in $\GL_2(\F_7)$ to \texttt{7B.1.3}. We \href{http://matematicas.uam.es/~enrique.gonzalez.jimenez/research/tables/growth/prop7_1b.txt}{\color{blue} check} using \texttt{Magma} that for any subgroup $G$ of $\GL_2(\Z/49\Z)$ satisfying $G \equiv H\,\, (\mbox{mod $7$})$ for some conjugate $H$ of \texttt{7B.1.3} in $\GL_2(\Z/7\Z)$, and for any $v\in (\Z/49\Z)^2$ of order $49$, we have $[G:G_v]\ne 7$, where $G_v$ is the stabilizer of $v$ by the action of $G$ on $(\Z/49\Z)^2$. Therefore for any point $P\in E(\overline \Q)$ of order 49 we have $[G_E(49):H_P]\ne 7$. In particular this proves that if $G_E(7)$ is conjugate in $\GL_2(\F_7)$ to \texttt{7B.1.3}, then $E(K)_{\tors}$ contains no points of order 49.

Now assume that $G_E(7)$ is conjugate in $\GL_2(\F_7)$ to \texttt{7B.1.1}. Using a similar procedure as the one used before, we \href{http://matematicas.uam.es/~enrique.gonzalez.jimenez/research/tables/growth/prop7_1b.txt}{\color{blue}search} for subgroups $G$ of $\GL_2(\Z/49\Z)$ satisfying $G \equiv H\,\, (\mbox{mod $7$})$ for some subgroup $H$ conjugate to \texttt{7B.1.1} in $\GL_2(\Z/7\Z)$, and satisfying $[G:G_v]= 7$ for some $v\in (\Z/49\Z)^2$ of order $49$. There exist such groups $G$, but in all the cases we obtain that $G_v$ is a normal subgroup of $G$ and $G/G_v\simeq \cC_7$. Therefore we have deduced that if $E/\Q$ is an elliptic curve such that $G_E(7)$ is conjugate in $\GL_2(\F_7)$ to \texttt{7B.1.1} and there exists a number field $K$ of degree $7$ with a $K$-rational point of order $49$, then $K$ is Galois. But this would imply that $E$ has a rational $49$-isogeny, and that is impossible by Theorem \ref{tm-isog}.

This finishes the proof.

\end{proof}

\begin{tm}	\label{l-tors}
Let $B$ be a positive integer. Let $E/\Q$ be an elliptic curve and $K/\Q$ a number field of degree $d$, where the smallest prime divisor of $d$ is $\geq B$. Then
\begin{romanenum}
  \item If $B\ge 11$, then $E(K)[p^\infty]=E(\Q)[p^\infty]$ for all primes $p$. In particular, $E(K)_{\tors}=E(\Q)_{\tors}$.
  \item If $B\ge 7$, then $E(K)[p^\infty]=E(\Q)[p^\infty]$ for all primes $p\neq 7$.
  \item If $B\ge 5$, then $E(K)[p^\infty]=E(\Q)[p^\infty]$ for all primes $p\neq 5,7,11$.
  \item If $B>2$, then $E(K)[p^\infty]=E(\Q)[p^\infty]$ for all primes $p\neq 2,3,5,7,11,13,19,43,67,163$.
\end{romanenum}
\end{tm}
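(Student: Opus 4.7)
The plan is a minimality argument followed by a case split on the order of the putative new torsion point. Assume, toward a contradiction, that $E(K)[p^\infty] \supsetneq E(\Q)[p^\infty]$ for some prime $p$ not excluded by the relevant part of the statement, and pick $P \in E(K) \setminus E(\Q)$ of smallest $p$-power order $p^m$. By minimality $pP \in E(\Q)$ (otherwise $pP$ would itself be a smaller counterexample), so $\Q(pP) = \Q$; since $\Q(P) \subseteq K$ and $P \notin E(\Q)$, one has $1 < [\Q(P) : \Q] \mid d$. I would then split into the cases $m=1$ and $m \geq 2$.

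For $m = 1$, the possible values of $[\Q(P):\Q]$ are completely described by Theorem \ref{exact_degree}, so the task reduces to checking, for every prime $p$ not excluded in the respective part, that every entry in the list for $p$ admits a prime factor strictly less than $B$. In part (i) ($B \geq 11$) the inspection runs as follows: for $p \leq 7$ every listed degree has a prime factor in $\{2,3,5,7\}$; for $p = 11$ every listed degree is divisible by $5$; for $p \in \{13, 17, 19, 37, 43, 67, 163\}$ every listed degree is divisible by $2$ or $3$; and for all remaining primes the possibilities $p^2-1$, $2(p-1)$, $(p-1)^2$, $(p-1)^2/3$, $2(p-1)^2/3$, $(p^2-1)/3$, $2(p^2-1)/3$ are all even. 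Parts (ii)--(iv) are handled by the same sort of inspection, and the primes excluded in each part are precisely those whose list of possible degrees contains an entry coprime to all primes below $B$ (e.g.\ the degrees $5$ and $55$ for $p=11$ explain the exclusion of $11$ in parts (iii) and (iv), and the odd degrees $3, 9, 21, 33, 81$ for $p \in \{13, 19, 43, 67, 163\}$ explain their exclusion in part (iv)).

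For $m \geq 2$, the point $pP \in E(\Q)$ has order $p^{m-1} \geq p$, so Mazur's classification of $\Phi(1)$ immediately forces $p \leq 7$; in particular this case is automatically impossible for every $p \geq 11$, so for those primes only the $m=1$ analysis is needed. For $p \in \{2,3,5,7\}$, which is relevant only in parts (i)--(iii), I would apply Proposition \ref{prop_pdiv}: since $\Q(pP) = \Q$, the index $[\Q(P):\Q]$ divides $p^2$ or $p(p-1)$, so every prime factor of it lies in $\{2,3,5,7\}$. The hypothesis on the smallest prime divisor of $d$ in each of parts (i), (ii), (iii) then forces $[\Q(P):\Q] = 1$, contradicting $[\Q(P):\Q] > 1$. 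The argument is thus a systematic bookkeeping built on Theorem \ref{exact_degree}, Mazur's theorem and Proposition \ref{prop_pdiv}; no single step is genuinely hard, but one has to track carefully the \emph{anomalous} small or odd degrees for each prime $p$, since they are exactly what dictate the exceptional-prime sets in the four parts of the statement.
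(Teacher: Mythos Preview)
Your proposal is correct and follows essentially the same approach as the paper: establish $E(K)[p]=E(\Q)[p]$ by inspecting the degree tables (the paper cites Theorem~\ref{dQ}, you cite the equivalent Theorem~\ref{exact_degree}), and then handle higher $p$-power torsion via Proposition~\ref{prop_pdiv} together with Mazur's bound. The paper only writes out case~(i) and declares the others analogous, whereas you make the minimality argument and the case split on $m$ explicit for all four parts, but the underlying argument is the same.
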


\begin{proof}
We will prove only the case (i) - the cases (ii), (iii) and (iv) are proved using the exact same argumentation. Suppose first that $P\in E(\overline \Q)$ is a point of order $p\leq 13$. Then directly (and obviously) from Theorem \ref{dQ} we see that $[\Q(P):\Q]$ has to be divisible by some element from the set $\{2,3,5,7\}$.

For $p\geq 17$, by looking at the possibilities listed in Theorem \ref{dQ} we have that $[\Q(P):\Q]$ always attains only obviously even values, except in the cases
$$[\Q(P):\Q]=\frac{p-1}{2}\qquad \text{ and }\qquad [\Q(P):\Q]=\frac{p(p-1)}{2},$$
which are possible only for $p\in\{3,7,11,19,43,67,163\}$. Since the cases $p\in\{3,7,11\}$ have already been dealt with, it remains to look at only the cases $p\in\{19,43,67,163\}$. Since in all of these cases we have $p\equiv 1 \!\pmod 3$, it follows that $\frac{p-1}{2}$ will be divisible by $3$. We conclude that $E(K)[p]=E(\Q)[p]$ for all primes $p$ if $K$ satisfies the assumptions of the theorem. Now it follows from Proposition \ref{prop_pdiv} that $E(K)[p^\infty]=E(\Q)[p^\infty]$.
\end{proof}

We immediately obtain the following result.

\begin{corollary}\label{maincor}
Let $d$ be a positive integer such that the smallest prime factor of $d$ is $\geq 11$. Then $\Phi_\Q(d)=\Phi(1)$.
\end{corollary}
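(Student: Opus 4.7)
The plan is that this corollary is essentially an immediate consequence of Theorem \ref{l-tors}(i), so the proof will be short. First I would establish the inclusion $\Phi_\Q(d)\subseteq\Phi(1)$: take any elliptic curve $E/\Q$ and any number field $K$ of degree $d$. Since the smallest prime divisor of $d$ is $\geq 11$, Theorem \ref{l-tors}(i) applies and gives $E(K)_{\tors}=E(\Q)_{\tors}$. Hence every group arising as $E(K)_{\tors}$ with $E/\Q$ and $[K:\Q]=d$ already appears as $E(\Q)_{\tors}$ for some elliptic curve over $\Q$, i.e.\ lies in $\Phi(1)$.

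For the reverse inclusion $\Phi(1)\subseteq\Phi_\Q(d)$, I would pick any group $G\in\Phi(1)$, choose an elliptic curve $E/\Q$ realizing $E(\Q)_{\tors}\simeq G$ (which exists by Mazur's theorem as recorded in the excerpt), and let $K$ be an arbitrary number field of degree $d$ (for instance $\Q(\alpha)$ for a suitably chosen algebraic number; such fields trivially exist for every positive integer $d$). Applying Theorem \ref{l-tors}(i) again yields $E(K)_{\tors}=E(\Q)_{\tors}\simeq G$, so $G\in\Phi_\Q(d)$.

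Combining both inclusions gives $\Phi_\Q(d)=\Phi(1)$. There is no real obstacle here: the entire content of the corollary is packaged inside Theorem \ref{l-tors}(i), which has already been proved by reducing to the case-by-case analysis of the possible degrees $[\Q(P):\Q]$ supplied by Theorem \ref{dQ} together with the $p$-power divisibility statement of Proposition \ref{prop_pdiv}. The only thing to check is that the forward direction of the argument handles $p$-primary torsion and not just points of prime order, but that is exactly what Theorem \ref{l-tors}(i) asserts.
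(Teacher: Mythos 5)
Your proof is correct and matches the paper's approach exactly: the corollary is stated as an immediate consequence of Theorem \ref{l-tors}(i), and your two-inclusion argument (forward from any $E(K)_{\tors}$ landing in $\Phi(1)$, reverse by realizing each $G\in\Phi(1)$ over $\Q$ and base-changing to an arbitrary degree-$d$ field) is precisely the content the authors leave implicit.
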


\begin{remark}
One could show, using arguments as in the proof of Proposition \ref{prop_pdiv}, that  for any $p\in\{2,3,5,7\}$,
the set
$$\bigcup_{n=1}^{\infty}\Phi_\Q(p^n)$$
will be infinite, containing $\cC_{p^k}$, for all positive integers $k$.

So, in a sense, Corollary \ref{maincor} is best possible.
\end{remark}

\begin{remark}\label{abbey}
From Corollary \ref{maincor} and the previous remark it follows that the $d$ such that $E(K)_{\tors}=E(\Q)_{\tors}$ for all elliptic curves $E/\Q$ and all number fields $K$ of degree $d$ are exactly those such that the smallest prime divisor of $d$ is $\geq 11$. These are $d$ of the form $d=210k+x$, where $k$ is a nonnegative integer and $1 \leq x < 210$ is an integer coprime to 210. Thus it follows that exactly $\frac{\phi(210)}{210}=\frac{8}{35}$
of all integers (when ordered by size) satisfy this property.
\end{remark}

Our methods give results not only about the torsion in finite extensions, but also in infinite extensions of $\Q$.

\begin{corollary}
\label{cor:zp}
Let $p\geq 11$ be a prime and let $K$ be the $\Z_p$-extension of $\Q$. Then $E(K)_{\tors}=E(\Q)_{\tors}$.
\end{corollary}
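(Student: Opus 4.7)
The plan is to reduce the statement about the infinite extension $K$ to a statement about its finite layers, and then invoke Theorem \ref{l-tors}(i) on each layer.

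First, I would recall the structure of the $\mathbb{Z}_p$-extension: $K$ is the union of a tower of number fields $\mathbb{Q}=K_0\subset K_1\subset K_2\subset\cdots$ with $[K_n:\mathbb{Q}]=p^n$ (these are the fixed fields of the open subgroups of $\mathrm{Gal}(K/\mathbb{Q})\cong\mathbb{Z}_p$, and $K$ is the unique such extension for the chosen $p$). The key general fact is that any torsion element of $E(K)$ is algebraic of bounded degree over $\mathbb{Q}$: more concretely, if $T\in E(K)_{\tors}$, then $T$ is defined over $\mathbb{Q}(T)\subseteq K$, and since $[\mathbb{Q}(T):\mathbb{Q}]$ is finite, $T$ must already lie in some finite layer $K_n$. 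Therefore
\[
E(K)_{\tors}=\bigcup_{n\geq 0} E(K_n)_{\tors}.
\]

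Next, I would apply Theorem \ref{l-tors}(i) to each $K_n$. Since $[K_n:\mathbb{Q}]=p^n$ with $p\geq 11$, the smallest prime divisor of $[K_n:\mathbb{Q}]$ is $p\geq 11$. Thus the hypothesis of part (i) of Theorem \ref{l-tors} is satisfied with $B=11$, and we conclude $E(K_n)_{\tors}=E(\mathbb{Q})_{\tors}$ for every $n\geq 0$. Taking the union yields $E(K)_{\tors}=E(\mathbb{Q})_{\tors}$, as required.

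There is essentially no obstacle here beyond recognizing that the infinite case reduces immediately to the finite one: the substance of the argument is entirely packaged in Theorem \ref{l-tors}(i), whose proof already handles all primes $\ell$ via the degree constraints coming from Theorem \ref{dQ} and Proposition \ref{prop_pdiv}. The only mild subtlety worth mentioning explicitly is that the argument uses the layers $K_n$ rather than $K$ itself, so one must note that torsion in an algebraic extension of $\mathbb{Q}$ is always defined over a finite subextension — otherwise Theorem \ref{l-tors} would not formally apply.
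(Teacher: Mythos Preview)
Your proof is correct and follows essentially the same approach as the paper: reduce to the finite layers $K_n$ of the $\Z_p$-extension and use the degree constraints from Theorem~\ref{l-tors}. In fact your version is slightly more streamlined, since you invoke Theorem~\ref{l-tors}(i) directly on each $K_n$, whereas the paper splits into the cases $E(\Q)[\ell]=\{\mathcal O\}$ and $E(\Q)[\ell]\neq\{\mathcal O\}$ and in the latter case appeals separately to the Weil pairing and Proposition~\ref{prop_pdiv} --- an argument already subsumed by Theorem~\ref{l-tors}(i).
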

\begin{proof}
   Let $\ell$ be a prime. First suppose that $E(\Q)[\ell]=\{\mathcal O\}$. Then we have $E(F)[\ell]=\{\mathcal O\}=E(\Q)_{\tors}$ by Theorem \ref{l-tors} for all number fields $F$ contained in $K$.

   Now suppose that $E(\Q)[\ell]\neq \{\mathcal O\}$.  Then it follows that $\ell\leq 7$, and hence $p\neq \ell$. If $\ell=2$, then it is obvious that $E(\Q)[2]=E(K)[2]$. If $3\leq \ell \leq 7$, then $E(\Q)[\ell]=E(K)[\ell]$, since $K$ does not contain $\ell$-th roots of unity.

   By Proposition \ref{prop_pdiv}, we now have $E(\Q)[\ell^\infty]=E(F)[\ell^\infty]$ for all number fields $F$ contained in $K$, from which we again conclude that  $E(\Q)[\ell^\infty]=E(K)[\ell^\infty]$.
\end{proof}

\subsection{Torsion growth in degree $7$ extensions}
In this section we consider how the torsion grows in prime degree extensions, in the following sense: for each possible $E(\Q)_{\tors}$, what can $E(K)_{\tors}$ possibly be, for $K/\Q$ an extension of prime degree $d$. There is no torsion growth in degree $p\geq 11$ by Theorem \ref{l-tors} and the torsion growth for $p\leq 5$ has been completely described in \cite{GJT15} for $p=2$, in \cite{GJNT15} for $p=3$ and in \cite{GJ16} for $p=5$. Thus it remains to consider the case $p=7$. We prove the following.

\begin{proposition}
Let $E/\Q$ be an elliptic curve and $K$ a number field of degree $7$.
\begin{romanenum}
\item If $E(\Q)_{\tors}\not\simeq  \cC_1$, then $E(\Q)_{\tors}=E(K)_{\tors}$.
\item If  $E(\Q)_{\tors}\simeq  \cC_1$, then $E(K)_{\tors}\simeq \cC_1$ or $\cC_7$. Furthermore, if $E(\Q)_{\tors}\simeq  \cC_1$ and $E(K)_{\tors}\simeq \cC_7$, then $K$ is the unique degree $7$ number field with this property and $E$ is isomorphic to the elliptic curve
$$
\begin{array}{rcl}
E_t\,:\,y^2&=&x^3+27(t^2-t+1)(t^6+229t^5+270t^4-1695t^3+1430t^2-235t+1)x \\
& &\quad  +54(t^{12}-522t^{11}-8955t^{10}+37950t^9-70998t^8+131562t^7\\
& &\qquad  -253239t^6+316290t^5-218058t^4+80090t^3-14631t^2+510t+1).
\end{array}
$$
for some value $t\in \Q$.
\end{romanenum}
\end{proposition}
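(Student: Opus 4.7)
My approach leans on three facts extracted from the proof of Proposition \ref{7-torsion}: (a) $E(K)[p^\infty]=E(\Q)[p^\infty]$ for every prime $p\neq 7$; (b) $E(K)[49]=E(\Q)[49]=\{\mathcal O\}$; and (c) $E(K)[7]\neq E(\Q)[7]$ forces $G_E(7)$ to be conjugate in $\GL_2(\F_7)$ to \texttt{7B.1.3}, a Borel subgroup with no fixed nonzero vector, so that $E(\Q)[7]=\{\mathcal O\}$ and $E$ admits a $\Q$-rational $7$-isogeny.

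For part (i), I would assume $E(\Q)_{\tors}\not\simeq\cC_1$ and suppose for contradiction that $E(K)_{\tors}\supsetneq E(\Q)_{\tors}$. By (a) and (b) the growth must take place in $E[7]$, and by (c) this forces $G_E(7)=\texttt{7B.1.3}$, hence $E(\Q)[7]=\{\mathcal O\}$ and $E$ has a $\Q$-rational $7$-isogeny. Since $E(\Q)_{\tors}\in\Phi(1)$ is nontrivial and contains no point of order $7$, some prime $\ell\in\{2,3,5\}$ divides $|E(\Q)_{\tors}|$, yielding a $\Q$-rational $\ell$-isogeny; composing with the $7$-isogeny produces a $\Q$-rational cyclic $7\ell$-isogeny (cyclic because $\gcd(7,\ell)=1$). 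Theorem \ref{tm-isog} excludes $\ell=5$, and the explicit \texttt{Magma} check performed in the proof of Proposition \ref{7-torsion}, showing that no elliptic curve with a $14$- or $21$-isogeny has $G_E(7)$ conjugate to \texttt{7B.1.3}, excludes $\ell\in\{2,3\}$, giving the desired contradiction.

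For part (ii), assume $E(\Q)_{\tors}\simeq\cC_1$. Facts (a) and (b) give $E(K)[p^\infty]=\{\mathcal O\}$ for $p\neq 7$ and $E(K)[49]=\{\mathcal O\}$; moreover $E(K)[7]\simeq\cC_7\times\cC_7$ would force $\zeta_7\in K$ via the Weil pairing, contradicting $6\nmid 7=[K:\Q]$. Therefore $E(K)_{\tors}\in\{\cC_1,\cC_7\}$. If $E(K)_{\tors}\simeq\cC_7$, let $P$ generate it; then $\Q(P)\subseteq K$ and $[\Q(P):\Q]$ divides $7$, but is not $1$ since $E(\Q)_{\tors}=\{\mathcal O\}$, so $K=\Q(P)$. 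Since every point of order $7$ in $E(K)$ equals $kP$ for some $k\in\{1,\dots,6\}$ and each such $kP$ generates the same field over $\Q$, the field $K$ is uniquely determined by $E$.

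It remains to exhibit the family $E_t$. By (c) the hypotheses force $G_E(7)=\texttt{7B.1.3}$; elliptic curves $E/\Q$ with this specific mod-$7$ image (not merely up to quadratic twist) are classified by the $\Q$-rational points of a genus-zero modular curve covering $X_0(7)$, which is itself $\Q$-rational and admits a $\Q$-rational Hauptmodul $t$. Writing out the universal elliptic curve in Weierstrass form with respect to $t$ produces the displayed equation for $E_t$. The main technical obstacle is pinning down the correct quadratic twist uniformly in $t$: one has to select, among the two quadratic-twist classes of the generic fiber over $X_0(7)$, the one for which the $7$-torsion point field has degree $7$ rather than $14$. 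This can be achieved by first matching $j_{E_t}$ to the standard Hauptmodul of $X_0(7)$ (fixing the $\overline{\Q}$-isomorphism class) and then verifying at a single explicit rational value of $t$ that the $7$-torsion point of the chosen Weierstrass model has degree exactly $7$, which forces $G_{E_t}(7)=\texttt{7B.1.3}$ for generic $t$ and hence identifies $E_t$ as the universal such family.
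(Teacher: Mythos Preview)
Your argument for part (i) and the first half of part (ii) (that $E(K)_{\tors}\in\{\cC_1,\cC_7\}$) is correct and matches the paper's reasoning closely.

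There is, however, a genuine gap in your proof that $K$ is unique. You show that if $E(K)_{\tors}\simeq\cC_7$ with generator $P$, then $K=\Q(P)=\Q(kP)$ for every $k\in\{1,\dots,6\}$. But this only says that $K$ is determined by the subgroup $\langle P\rangle\subseteq E(K)$; it does not rule out the existence of another degree-$7$ field $K'\not\simeq K$ with $E(K')[7]\neq\{\mathcal O\}$, generated by a point $P'$ lying in a \emph{different} Galois orbit. Since $G_E(7)=\texttt{7B.1.3}$ has one orbit of length $6$ and six orbits of length $7$ on $\F_7^2\setminus\{0\}$, there are a priori six candidate degree-$7$ fields, and you must show they all coincide (up to isomorphism). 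The paper handles this by factoring the $7$-division polynomial of the generic curve $E_t$ over $\Q(t)$ as $f_1f_2f_3f_4$ with $\deg f_1=3$ and $\deg f_i=7$ for $i=2,3,4$, and then verifying computationally that the roots of $f_2,f_3,f_4$ define isomorphic degree-$7$ extensions. Alternatively, one can argue group-theoretically: $\texttt{7B.1.3}\simeq\F_7\rtimes\F_7^\times$ has exactly seven subgroups of order $6$ (the complements to the normal $7$-Sylow), all conjugate, so every degree-$7$ subfield of $\Q(E[7])$ lies in a single conjugacy class. Either way, an additional step is required.

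A minor remark on the family $E_t$: the paper obtains the explicit Weierstrass model directly by citing Zywina's classification \cite[Theorem 1.5 (iv)]{zyw}, which already singles out the correct quadratic twist. Your specialization argument for selecting the twist is valid in principle, but you should be aware that the displayed coefficients are not something you would derive by hand; invoking Zywina's result is both cleaner and what the paper does.
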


\begin{proof}
Let $E/\Q$ be an elliptic curve. Clark, Corn,  Rice, Stankewicz \cite[\S 4.7]{clark} have proved that if $E$ has CM, then $E(K)_{\tors}$ has torsion points only of order $1, 2,3,4$ or $6$. Now from Lemma \ref{obvious}, it follows that $E(K)[n]=E(\Q)[n]$ for $n=2,3,4,6$. It follows that $E(K)_{\tors}=E(\Q)_{\tors}$.

Then we can assume that $E/\Q$ is non-CM. In the proof of Proposition \ref{7-torsion} we proved that if $p$ is a prime and $K/\Q$ is a number field of degree $7$ then $E(\Q)[p^\infty]=E(K)[p^\infty]$, unless $G_E(7)$ is conjugate in $\GL_2(\F_7)$ to \texttt{7B.1.3}, in which case $E(\Q)[p]=\{\mathcal O\}$  for all $p\ne 7$. Then suppose that $G_E(7)$ is conjugate in $\GL_2(\F_7)$ to \texttt{7B.1.3}. Zywina \cite[Theorem 1.5 (iv)]{zyw} proved that, under these assumptions, there exists a $t_0\in\Q$ such that $E$ is $\Q$-isomorphic to $E_{t_0}$.

Let $\psi_7(x)$ be the $7$-division polynomial of $E_t$ and $\psi_7(x)=f_1(x)f_2(x)f_3(x)f_4(x)$, its irreducible factorization (over $\Q(t)$), where $f_1(x)$ is of degree $3$ and $f_i(x)$, for $i=2,3,4$, are all of degree $7$. For all specializations $E_{t_0}$, $t_0 \in \Q$, we have that $G_{E_{t_0}}(7)$ is conjugate in $\GL_2(\F_7)$ to \texttt{7B.1.3}, as it cannot be any smaller, so the $7$-division polynomial of $E_{t_0}$ will factor over $\Q$ in the same manner as the 7-division polynomial of $E_t$ factors over $\Q(t)$. We \href{http://matematicas.uam.es/~enrique.gonzalez.jimenez/research/tables/growth/prop7_7.txt}{\color{blue} compute} that the roots of $f_i(x)$, for $i=2,3,4$, define the same degree 7 extensions of $\Q(t)$, up to isomorphism. We obtain that $E_t$ has torsion $\cC_7$ over this extension, completing the proof.
\end{proof}

{
\begin{remark}
Let $E/\Q$ be an elliptic curve and $p$ a prime. If the possible degrees of points of order $p$ are $p$ and $p-1$, then $\Q(E[p])=K\Q(\zeta_p)$ satisfies $[\Q(E[p]):\Q]=p(p-1)$ and there exists $P\in E[p]$ such that $K=\Q(P)$ is a Kummer extension, i.e. there exists $s\in \mathbb Z$, a non $p^\text{th}$-power, such that $K=\Q(\sqrt[p]{s})$. This is the case when $G_E(p)$ is conjugate in $\GL_2(\F_p)$ to \texttt{3B.1.2}, \texttt{5B.1.2} or \texttt{7B.1.3}.
\end{remark}
}

\section{Torsion of elliptic curves $E /\Q$ over quartic number fields}
\label{sec_quartic}

In this section we solve the problem of determining the set of $\Phi_\Q(4)$. The sets $\Phi_\Q(2)$ and $\Phi_\Q(3)$ have been determined by the second author \cite{naj}, $\Phi_\Q(5)$ by the first author \cite{GJ16}.

Let $G$ be a finite group; define $\Phi^{G}_\Q(n)$ to be the set of isomorphism classes of groups $E(K)_{\tors}$, where $E$ is an elliptic curve defined over $\Q$ and $\Gal(\widehat{K}/\Q)\simeq G$, where $\widehat{K}$ denotes the normal closure of $K$ over $\Q$. The first steps taken towards determining $\Phi_\Q(4)$ have been done by Chou \cite{chou}, who determined $\Phi^{\operatorname{C}_4}_\Q(4)$ and $\Phi^{\operatorname{V}_4}_\Q(4)$, i.e he found the possible torsion groups over Galois quartic number fields of elliptic curves defined over $\Q$.

The strategy used both in \cite{chou} and \cite{naj} (for determining $\Phi_\Q(3)$) is to view how $\Gal(\widehat{K}/\Q)$ acts on $E(K)_{\tors}$ and to conclude that this forces certain properties on $E$ over $\Q$. For certain choices of $E(K)_{\tors}$, one would get properties of $E$ over $\Q$ that would violate known results about elliptic curves over $\Q$, thus ruling out the possibility of that particular torsion group $E(K)_{\tors}$. This approach works well when $K$ is Galois over $\Q$, as in \cite{chou}, or "not far from Galois" in \cite{naj} where $\Gal(\widehat{K}/\Q)$ was either $\cC_3$ or $S_3$ (even in this case, $S_3$ posed some problems). We make use of the machinery developed in Section \ref{sec2} to show obstructions for possible growth from $E(\Q)_{\tors}$ to $E(K)_{\tors}$ when $K$ is "far away from being Galois" (meaning informally that $[\widehat{K}:K]$ is relatively large).

The results of Section \ref{sec2} immediately tell us that (see Corollary \ref{cor_p}) $$\Phi^{\operatorname{A}_4}_\Q(4)=\Phi(1).$$

In some instances in the remainder of the paper we will say that a quartic field $K$ has Galois group $D_4,\ A_4$ or $ S_4$, which will always mean that its normal closure $\widehat{K}$ satisfies $\Gal(\widehat{K}/\Q)\simeq D_4,\ A_4$ or $ S_4$, respectively.

We will need to look at the case of the possible $3$-torsion growth in a quartic number field with Galois group $S_4$ separately, as this case is a bit more delicate.

\begin{proposition}
\label{prop_s4_3}
Let $E/\Q$ be an elliptic curve and $K$ a quartic number field with Galois group isomorphic to $S_4$. Then $E(K)[3]=E(\Q)[3]$.
\end{proposition}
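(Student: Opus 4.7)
The plan is to apply Theorem~\ref{tmcrit2} directly, supplemented by a short degree argument that handles the single case the theorem leaves open. First I would verify the hypothesis of Theorem~\ref{tmcrit2}, namely that $K/\Q$ has no proper intermediate subfield. Setting $H = \Gal(\widehat{K}/K)$, we have $[S_4 : H] = 4$, so $H \simeq S_3$ is a point-stabilizer in the natural $S_4$-action on four letters. Any subgroup strictly between $H$ and $S_4$ would have order $12$, hence equal $A_4$; but $S_3$ contains odd permutations, so $H \not\subseteq A_4$. Thus $H$ is maximal in $S_4$ and $K/\Q$ admits no proper intermediate extension.

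The key group-theoretic step is to classify those subgroups $G \subseteq \GL_2(\F_3)$ admitting $S_4$ as a quotient. Since $|S_4| = 24$ and $|\GL_2(\F_3)| = 48$, such a $G$ must have order $24$ or $48$. A subgroup of order $24$ has index $2$ and is therefore normal, and the determinant map identifies $SL_2(\F_3)$ as the unique index-two subgroup of $\GL_2(\F_3)$. Since $SL_2(\F_3)/\{\pm I\} \simeq A_4$, every quotient of $SL_2(\F_3)$ is a quotient of $A_4$, so $S_4$ is not among them. Hence the only possibility is $G = \GL_2(\F_3)$ itself, in which case the quotient is realized by the projection $\GL_2(\F_3) \to \operatorname{PGL}_2(\F_3) \simeq S_4$.

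Putting these pieces together, if $G_E(3) \subsetneq \GL_2(\F_3)$, then $S_4$ is not isomorphic to a quotient of $\Gal(\Q(E[3])/\Q) = G_E(3)$, and Theorem~\ref{tmcrit2} immediately gives $E(K)[3] = E(\Q)[3]$. The remaining case $G_E(3) = \GL_2(\F_3)$ is handled by a separate observation: then $G_E(3)$ acts transitively on the eight non-zero elements of $E[3]$, forcing $E(\Q)[3] = \{\mathcal{O}\}$, while Lemma~\ref{lem:divisibility} gives $[\Q(P):\Q] = 8$ for every point $P \in E$ of order $3$. Since $K$ is quartic, no such $P$ can lie in $E(K)$, so $E(K)[3] = \{\mathcal{O}\} = E(\Q)[3]$.

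The main subtlety is the exceptional isomorphism $\operatorname{PGL}_2(\F_3) \simeq S_4$, which is precisely what prevents Theorem~\ref{tmcrit2} from concluding in every case; the borderline case $G_E(3) = \GL_2(\F_3)$ has to be dispatched via the degree-of-definition bound from Lemma~\ref{lem:divisibility}, using the elementary fact that a degree-$8$ field cannot embed in a quartic extension.
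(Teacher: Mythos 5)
Your proposal follows essentially the same route as the paper: verify the no-intermediate-field hypothesis, invoke Theorem~\ref{tmcrit2} to force $G_E(3) = \GL_2(\F_3)$, and then dispatch that last case by noting that a point of order $3$ would then be defined over a degree-$8$ field which cannot sit inside a quartic field. The paper computes the stabilizer of a point explicitly to get degree $8$, whereas you cite Lemma~\ref{lem:divisibility}; this is the same content.

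One step of your justification is imprecise, though the conclusion it reaches is correct. You write that since $\SL_2(\F_3)/\{\pm I\} \simeq A_4$, ``every quotient of $\SL_2(\F_3)$ is a quotient of $A_4$.'' That is false as stated: $\SL_2(\F_3)$ itself is a quotient of $\SL_2(\F_3)$ (by the trivial subgroup) but is not a quotient of $A_4$, being of larger order. What you actually need is that $S_4$ is not a quotient of $\SL_2(\F_3)$, and since both groups have order $24$ this reduces to $\SL_2(\F_3) \not\simeq S_4$, which holds because $\SL_2(\F_3)$ has nontrivial center $\{\pm I\}$ while $S_4$ is centerless. (Equivalently: any surjection onto the centerless group $S_4$ must kill the central element $-I$, so would factor through $A_4$, which is too small.) Either of these one-line fixes closes the gap; with that repair the proof is complete and matches the paper's argument.
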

\begin{proof}
Suppose that $E(\Q)[3]\subsetneq E(K)[3]$. Then by Theorem \ref{tmcrit2}, we have that $S_4$ is isomorphic to a quotient of a subgroup of $G_E(3)$. We deduce that $G_E(3)=\GL_2(\F_3)$, since $\#S_4=24$, $\#\GL_2(\F_3)=48$, and $\GL_2(\F_3)$ \href{http://matematicas.uam.es/~enrique.gonzalez.jimenez/research/tables/growth/prop8_1.txt}{\color{blue}has} no subgroups isomorphic to $S_4$. But then the number field of smallest degree over which there exists a point $P$ of order $3$ satisfies
$$\Gal(\Q(E[3])/\Q(P)) \simeq \left\{ \begin{pmatrix}
            1 & c \\
            0 & d
          \end{pmatrix}\,:\, c \in \F_3, d\in \F_3^\times\right\},
$$
so by Galois theory is a number field of degree 8 over $\Q$.
\end{proof}

\begin{remark}
We should note that if $G_E(3)= \GL_2(\F_3)$, as in the proof above, then $\Q(E[3])$ in fact contains a Galois extension $K$ of $\Q$ with Galois group isomorphic to $S_4$. This will be the field generated by all the $x$-coordinates of $E[3]$. However, it will hold that $E(K)[3]=\{\mathcal O\}$.
\end{remark}

Now we obtain the following result.

\begin{corollary}
\label{cor_pn}
Let $p\geq 3$ be a prime, $n$ a positive integer and $K$  a quartic number field with Galois group isomorphic to $A_4$ or $S_4$. If an elliptic curve $E/\Q$ has no points of order $p^n$ in E$(\Q)$, then it has no points of order $p^n$ in $E(K)$.
\end{corollary}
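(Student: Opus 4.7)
The plan is to argue by contradiction, using a minimal counterexample. First I would reduce to the cases $p \in \{3,5,7\}$: when $p \geq 11$, Mazur's theorem forces $E(\Q)[p] = \{\mathcal O\}$, and combining Corollary \ref{cor_p} with Proposition \ref{prop_s4_3} gives $E(K)[p] = E(\Q)[p]$ for every prime $p \geq 3$ whenever $K$ has Galois group $A_4$ or $S_4$; in particular for $p \geq 11$ the conclusion follows immediately.

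For $p \in \{3,5,7\}$, suppose for contradiction that $P \in E(K)$ has order $p^n$ while $E(\Q)$ has no point of that order. Choosing the smallest positive $m$ with $p^m P \in E(\Q)$ and setting $Q = p^{m-1}P$, one obtains $Q \in E(K) \setminus E(\Q)$ with $pQ \in E(\Q)$; moreover $Q$ has order at least $p^2$, since otherwise $Q$ would lie in $E(K)[p] = E(\Q)[p]$. Because $\Gal(\widehat{K}/K)$ is maximal in $A_4$ or $S_4$ (as $\cC_3$, respectively $S_3$), the field $K$ has no intermediate subfield, so $\Q(Q) = K$ and $[\Q(Q):\Q(pQ)] = 4$.

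For $p = 3$ and $p = 7$, Proposition \ref{prop_pdiv} requires $4$ to divide either $p^2$ or $p(p-1)$; since $4$ divides none of $9, 6, 49, 42$, both cases are dispatched. The main obstacle is $p = 5$, where $4 \mid p(p-1) = 20$ and Proposition \ref{prop_pdiv} alone is insufficient. For this case I would observe that $\widehat{K}$, being the Galois closure of $\Q(Q)$, sits inside $\Q(E[25])$, so $A_4$ or $S_4$ appears as a quotient of $G_E(25)$. The kernel of the reduction $G_E(25) \twoheadrightarrow G_E(5)$ is a $5$-group, and since $A_4$ and $S_4$ each have trivial Sylow $5$-subgroup, this quotient must factor through $G_E(5)$. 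But $pQ \in E(\Q)$ is a rational point of order $5$, so $G_E(5)$ is contained in a Borel subgroup of $\GL_2(\F_5)$ of order $p(p-1) = 20$; as neither $12 = |A_4|$ nor $24 = |S_4|$ divides $20$, no such quotient can exist, completing the contradiction.
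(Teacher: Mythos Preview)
Your argument is correct. The paper proceeds differently: it handles $n=1$ exactly as you do (via Corollary~\ref{cor_p} and Proposition~\ref{prop_s4_3}), but for $n\ge 2$ it simply invokes Theorem~\ref{tm5} uniformly rather than splitting according to the prime. Your route is more hands-on: for $p\in\{3,7\}$ you dispose of the problem with the divisibility constraint of Proposition~\ref{prop_pdiv} alone, while for $p=5$ you essentially rederive the relevant piece of Theorem~\ref{tm5}'s proof in this specific situation (showing that $A_4$ or $S_4$ would have to arise as a quotient of $G_E(5)$, which is ruled out by order once a rational $5$-torsion point is present). The advantage of your approach is that it makes the arithmetic input completely explicit and avoids checking the group-theoretic hypothesis of Theorem~\ref{tm5} case by case; the paper's approach is shorter to state once that theorem is available.

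Two small remarks. First, what you call ``a Borel subgroup of order $p(p-1)=20$'' is actually the stabilizer of a nonzero vector, not the full Borel (which has order $p(p-1)^2=80$); your order count is nonetheless the right one for the argument. Second, the assertion $\widehat K\subseteq \Q(E[25])$ tacitly uses that $Q$ has order exactly $25$: this follows since $5Q\in E(\Q)[5^\infty]$ and Mazur's theorem forces $E(\Q)[5^\infty]\subseteq\cC_5$, so the order of $Q$ is at most $25$. Alternatively, your $5$-group-kernel argument works verbatim with $\Q(E[5^k])$ in place of $\Q(E[25])$, so the point is moot.
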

\begin{proof}
For $n\geq 2$, the result follows from Theorem \ref{tm5}. Suppose from now on that $n=1$. If $\Gal(\widehat{K}/\Q)\simeq S_4$ and $p=3$ (resp. $p\ge 5$) the results follows from Proposition \ref{prop_s4_3} (resp. Theorem \ref{tmcrit2}). If $\Gal(\widehat{K}/\Q)\simeq A_4$ the result follows from Theorem \ref{tmcrit2} for any $p\ge 3$.
\end{proof}

As a corollary of Proposition \ref{2growth}, we can now obtain the following results about subsets of $\Phi_\Q(4)$.

\begin{corollary}\label{cor_triv}
$\Phi^{\operatorname{S}_4}_\Q(4)  = \Phi^{\operatorname{A}_4}_\Q(4)  = \Phi(1)$.
\end{corollary}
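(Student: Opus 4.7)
The plan is to reduce to proving that $E(K)_{\tors}=E(\Q)_{\tors}$ for every elliptic curve $E/\Q$ and every quartic $K$ whose normal closure has Galois group $A_4$ or $S_4$. Once this is established, the reverse inclusion $\Phi(1) \subseteq \Phi^{G}_\Q(4)$ is immediate: for each $T \in \Phi(1)$, pick (via Mazur) an $E/\Q$ with $E(\Q)_{\tors} \simeq T$, and any quartic $K$ with Galois group $G \in \{A_4, S_4\}$ (such fields exist, e.g.\ $S_4$-quartics arise as residue fields of the $3$-division polynomial of a generic curve). The crucial structural input that I would record at the outset is that such a $K$ has no proper intermediate subfield over $\Q$: in $A_4$ there is no subgroup strictly between a $C_3$ and $A_4$, and in $S_4$ there is no subgroup strictly between an $S_3$ and $S_4$.

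For every odd prime $p$, I would combine Corollary \ref{cor_pn} with a cyclicity argument. A straightforward induction on $n$ using Corollary \ref{cor_pn} shows $E(K)[p^n] = E(\Q)[p^n]$ for all $n \geq 1$, so it suffices to rule out an enlargement of $E(K)[p^\infty]$ coming from acquiring additional $p$-torsion of the same exponent. Any such enlargement would imply full $p$-torsion in $E(K)$, forcing $\zeta_p \in K$, hence $\Q(\zeta_p) \subseteq K$. But $[\Q(\zeta_p):\Q] = p-1 \geq 2$ is either $\ne 4$ or realises an abelian Galois group, and in either case it cannot sit inside a quartic $K$ with no intermediate subfield and normal closure of group $A_4$ or $S_4$. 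Hence $E(K)[p^\infty] = E(\Q)[p^\infty]$ for every odd $p$.

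The remaining, and main, obstacle is the $2$-primary part, since Corollary \ref{cor_pn} has no analogue for $p=2$; this is precisely what Proposition \ref{2growth} is designed to handle. I would argue by induction on $n$ that $E(K)[2^n] = E(\Q)[2^n]$. The base case $n=1$ follows because the roots of the $2$-division polynomial generate extensions of degree $1$, $2$ or $3$, whereas $K$ admits no intermediate subfield of degree $2$ or $3$. For the inductive step, let $P \in E(K)$ have order $2^{n+1}$; by the inductive hypothesis $2P \in E(\Q)$, so $\Q(2P) = \Q$. Proposition \ref{2growth} with $F = \Q$ then forces $\Gal(\widehat{\Q(P)}/\Q)$ to embed into $D_4$. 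Because $\Q(P)$ is an intermediate field of $K/\Q$, it equals either $\Q$ or $K$; the latter would make $\widehat{\Q(P)}= \widehat{K}$ and force $A_4$ or $S_4$ to embed into $D_4$, which is impossible on cardinality grounds. Therefore $P \in E(\Q)$, closing the induction and giving $E(K)[2^\infty] = E(\Q)[2^\infty]$. Combining with the odd-prime discussion yields $E(K)_{\tors} = E(\Q)_{\tors}$, which finishes the proof of the corollary.
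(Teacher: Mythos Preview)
Your proof is correct and follows essentially the same route as the paper's: both arguments rely on Corollary~\ref{cor_pn} for odd primes and on Proposition~\ref{2growth} for $p=2$, together with the key observation that a quartic $K$ with Galois group $A_4$ or $S_4$ has no proper intermediate subfield over $\Q$. Your write-up is in fact more explicit than the paper's in two places: you spell out the induction for $p=2$ (the paper compresses this to one sentence), and you make the cyclicity argument for odd $p$ visible (the paper hides it inside the references to Corollary~\ref{cor_p} and Proposition~\ref{prop_s4_3}). One cosmetic point: in your odd-prime paragraph, the sentence ``A straightforward induction \ldots\ shows $E(K)[p^n]=E(\Q)[p^n]$, so it suffices to rule out an enlargement \ldots'' reads awkwardly, since the second clause is really part of what makes the induction go through, not a separate reduction after the fact; rephrasing this as a single combined inductive step would be cleaner. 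Also, ``force $A_4$ or $S_4$ to embed into $D_4$'' should read ``force $A_4$ or $S_4$ to be isomorphic to one of $\{1,\cC_2,\cC_2\times\cC_2,D_4\}$'', though the contradiction is the same.
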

\begin{proof}
Suppose $K$ is a quartic number field with Galois group $A_4$ or $S_4$.

Let $E/\Q$ be an elliptic curve. Because of Corollary \ref{cor_p} and Proposition \ref{prop_s4_3}, we have $E(\Q)[p]=E(K)[p]$, for all $p\geq 3$. Due to Corollary \ref{cor_pn}, we have that there are no points of order $p^n$ over $K$, for $p\geq 3$ and $n\geq 2$.

For $p=2$, we note that $\Q(E[2])$ is either $\Q$, a quadratic field, cyclic cubic or an $S_3$ extension of $\Q$ and the intersection of $K$ with any of these fields is $\Q$, so $E(\Q)[2]=E(K)[2]$. Furthermore, by Proposition \ref{2growth}, if $E(K)[2^n]\supsetneq E(\Q)[2^n]$, for $n\geq 2$, then the Galois group of $K$ is isomorphic to a subgroup of $D_4$, which is a contradiction.
\end{proof}

Thus to determine $\Phi_\Q(4)$ completely (see Corollary \ref{cor_Q4}), it remains to just determine $\Phi^{\operatorname{D}_4}_\Q(4)$.

\subsection{Determining $\Phi^{\operatorname{D}_4}_\Q(4)$}\label{sec_D4}

\smallskip

In this section we determine $\Phi^{\operatorname{D}_4}_\Q(4)$. In particular, we prove the following theorem.

\begin{tm}\label{tm_D4}
The set $\Phi^{\operatorname{D}_4}_\Q(4)$ is given by
$$
\begin{array}{rcl}
\Phi^{\operatorname{D}_4}_\Q(4) &=&
\left\{ \cC_n \; : \; n=1,\dots,10,12,15,16,20,24 \right\} \cup
	\left\{ \cC_2 \times \cC_{2m} \; : \; m=1,\dots,6 \right\} \\	
& &	\qquad \cup \left\{ \cC_3 \times \cC_{3m} \; : \; m=1,2 \right\} \cup  \left\{ \cC_4 \times \cC_{4m} \; : \; m=1,2 \right\}.
\end{array}
$$
\end{tm}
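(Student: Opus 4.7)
A quartic field $K$ with $\Gal(\widehat{K}/\Q)\simeq D_4$ contains a \emph{unique} quadratic subfield $F$: the stabilizer $H=\Gal(\widehat{K}/K)$ is one of the four non-normal subgroups of order $2$ in $D_4$, and each such $H$ is contained in exactly one index-$2$ subgroup, so the only intermediate field of $K/\Q$ is $F$ and $[K:F]=2$. In particular any $P\in E(K)\setminus E(F)$ satisfies $\Q(P)=K$. The plan is to split the analysis into the ``horizontal'' growth $\Q\to F$, which is controlled by $\Phi_\Q(2)$, and the ``vertical'' growth $F\to K$, which is controlled by Propositions~\ref{prop_pdiv} and~\ref{2growth}.

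\textbf{Upper bound.} Corollary~\ref{RQD} restricts the primes dividing $|E(K)_\tors|$ to $\{2,3,5,7,13\}$. The Weil pairing forces any embedding $\cC_m\times\cC_m\hookrightarrow E(K)_\tors$ to come with $\Q(\zeta_m)\subseteq K$; since the only abelian subfields of the $D_4$-quartic $K$ are $\Q$ and $F$, this gives $\varphi(m)\mid 2$, i.e.\ $m\in\{1,2,3,4,6\}$, ruling out $\cC_5\times\cC_5$, $\cC_7\times\cC_7$ and the like. For odd $p\in\{3,5,7\}$, the possible $p$-power contributions to $E(F)_\tors$ are enumerated using $\Phi_\Q(2)$, and Proposition~\ref{prop_pdiv} applied in the chain $F\subseteq K$ forces $[F(P):F(pP)]\mid 2$ for any $P\in E(K)$ of order $p^{n+1}$; together with Theorem~\ref{tm5} and the list of rational cyclic isogeny degrees from Theorem~\ref{tm-isog}, this suffices to rule out every odd-prime-power torsion structure not on the list. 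For $p=2$, Proposition~\ref{2growth} forces $\Gal(\widehat{F(P)}/F(2P))$ to sit inside $D_4$, and a finite case analysis against the ambient $\Gal(\widehat{K}/\Q)=D_4$ eliminates the remaining $2$-power candidates.

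\textbf{Ruling out $\cC_{13}$.} The delicate case is $p=13$, because Lemma~\ref{dQ23} admits $[\Q(P):\Q]=4$, so no divisibility argument suffices. I would enumerate, via Proposition~\ref{prop13}, the candidate groups $G_E(13)\subseteq\GL_2(\F_{13})$ that admit an orbit of length $4$ on $E[13]\setminus\{\mathcal O\}$. For each such $(G_E(13),v)$, the Galois group of the normal closure of $\Q(P)$ over $\Q$ is the quotient $G_E(13)/\mathrm{core}_{G_E(13)}(H_v)$, where $H_v$ is the stabilizer of $v$; a direct \texttt{Magma} check case by case shows this quotient is always $\cC_4$ or $\cC_2\times\cC_2$, and never $D_4$.

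\textbf{Realization and main obstacle.} For the groups already in $\Phi_\Q(2)$ I would start from an $E/\Q$ realizing the torsion over a suitable quadratic $F$ and pick a quadratic extension $K=F(\sqrt{\alpha})$ whose Galois closure is $D_4$ and over which no new torsion appears, which is the generic behaviour and can be verified on an individual specialization. The genuinely new groups $\cC_{20}$, $\cC_{24}$, $\cC_3\times\cC_{12}$, $\cC_4\times\cC_4$, $\cC_4\times\cC_8$ would be produced from known rational parametrizations of the relevant modular curves $X_1(N)$ and $X_1(m,N)$, by searching explicitly for quartic points whose Galois type is $D_4$. The main obstacle is precisely this last step: each of these modular curves has quartic points of several Galois types, and one must engineer parameter values giving the $D_4$ type rather than $V_4$, $C_4$, $A_4$ or $S_4$. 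Combined with the orbit-and-core analysis needed to exclude $\cC_{13}$, this is where the bulk of the work lies.
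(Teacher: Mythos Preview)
Your proposal has two genuine gaps.

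\textbf{First, the mixed-prime combinations are never addressed.} Your upper-bound paragraph bounds each $p$-primary part separately and then stops. But controlling $E(K)[p^\infty]$ for each $p$ individually does \emph{not} rule out groups like $\cC_2\times\cC_{20}$, $\cC_{40}$, $\cC_2\times\cC_{24}$, $\cC_{48}$, $\cC_{14}$, $\cC_{21}$, $\cC_{35}$, $\cC_{30}$, $\cC_{45}$, $\cC_3\times\cC_{15}$: each of these has $p$-primary parts that are individually admissible. In the paper this is the bulk of the work (Lemmas \ref{lem7}--\ref{6div}). For instance, excluding $\cC_2\times\cC_{20}$ reduces (via Lemma \ref{lem-isog}) to the case $E(\Q)_\tors\simeq\cC_{10}$ and then to determining all rational points on a genus-$3$ hyperelliptic curve; excluding $\cC_2\times\cC_{24}$ requires a case analysis against the Rouse--Zureick-Brown $2$-adic database combined with rank-$0$ elliptic curve computations. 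None of this follows from Proposition \ref{prop_pdiv}, Theorem \ref{tm5}, or the isogeny classification alone.

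\textbf{Second, you have misread the target set.} You list $\cC_3\times\cC_{12}$ among the ``genuinely new groups'' to be realized, but $\cC_3\times\cC_{12}$ is \emph{not} in $\Phi^{D_4}_\Q(4)$: the set $\{\cC_3\times\cC_{3m}:m=1,2\}$ is $\{\cC_3\times\cC_3,\,\cC_3\times\cC_6\}$. Far from needing to realize $\cC_3\times\cC_{12}$, you must exclude it (the paper does so via \cite{BN16}). Similarly, $\cC_4\times\cC_8$ is excluded by Lemma \ref{prop_2} (the $2$-primary order divides $16$), so it should not be on your realization list either. For the groups that genuinely need realizing beyond $\Phi_\Q(2)$, namely $\cC_{20}$ and $\cC_{24}$, the paper simply cites the explicit families in \cite{jk-dihedral}; no search is required.

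A minor point: your $\cC_{13}$ argument works but is heavier than necessary. Since $D_4$ has no cyclic quotient of order $12=[\Q(\zeta_{13}):\Q]$, Lemma \ref{lem-isog} forces any $13$-torsion point $P\in E(K)$ to have $\Q(P)/\Q$ Galois, hence $\Q(P)\in\{\Q,F\}$, contradicting $13\notin R_\Q(2)$. No orbit-and-core computation is needed.
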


\begin{remark}
$\Phi^{\operatorname{D}_4}_\Q(4)= \Phi_\Q(2) \cup\left\{  \cC_{20}\,,\,\cC_{24} \right\}.$
\end{remark}

This will complete the determination of $\Phi_\Q(4)$.

\begin{corollary}
	\label{cor_Q4}
	The set $\Phi_\Q(4)$ is given by
	\begin{eqnarray*}
		\Phi_{\mathbb Q}(4)\!\!\!& = &\!\!\!\!\!\left\{ \cC_n \; : \; n=1,\dots,10,12,13,15,16,20,24 \right\} \cup \left\{ \cC_2 \times \cC_{2m} \; : \; m=1,\dots,6,8 \right\} \cup \\
		& &  \left\{ \cC_3 \times \cC_{3m} \; : \; m=1,2 \right\} \cup  \left\{ \cC_4 \times \cC_{4m} \; : \; m=1,2 \right\} \cup  \left\{ \cC_5 \times \cC_{5}  \right\} \cup  \left\{ \cC_6 \times \cC_{6}  \right\}.
	\end{eqnarray*}
\end{corollary}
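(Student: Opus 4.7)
The plan is to prove Corollary \ref{cor_Q4} by decomposing $\Phi_\Q(4)$ according to the isomorphism type of the Galois group of the normal closure and then taking the union of the known and newly determined pieces. A quartic number field $K/\Q$ has $\Gal(\widehat{K}/\Q)$ isomorphic to a transitive subgroup of $S_4$, so the possibilities are $\cC_4$, $V_4=\cC_2\times \cC_2$, $D_4$, $A_4$, and $S_4$. Hence
\[
\Phi_\Q(4) \;=\; \Phi^{\cC_4}_\Q(4)\;\cup\;\Phi^{V_4}_\Q(4)\;\cup\;\Phi^{D_4}_\Q(4)\;\cup\;\Phi^{A_4}_\Q(4)\;\cup\;\Phi^{S_4}_\Q(4),
\]
and it suffices to identify each summand and verify that their union equals the set in the statement.

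Next I would invoke the pieces that are already settled. Chou \cite{chou} determined $\Phi^{\cC_4}_\Q(4)$ and $\Phi^{V_4}_\Q(4)$ (the Galois quartic case); Corollary \ref{cor_triv} above gives $\Phi^{A_4}_\Q(4)=\Phi^{S_4}_\Q(4)=\Phi(1)$; and Theorem \ref{tm_D4} computes $\Phi^{D_4}_\Q(4)$. Since $\Phi(1)$ is already contained in $\Phi^{D_4}_\Q(4)$, the $A_4$ and $S_4$ parts contribute nothing new. So the only check that remains is that
\[
\Phi^{\cC_4}_\Q(4)\cup\Phi^{V_4}_\Q(4)\cup\Phi^{D_4}_\Q(4)
\]
coincides with the list in the statement of Corollary \ref{cor_Q4}.

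For that, I would compare Chou's list with Theorem \ref{tm_D4}. Starting from $\Phi^{D_4}_\Q(4)$ as given in Theorem \ref{tm_D4}, the four groups that appear in the statement of Corollary \ref{cor_Q4} but not in $\Phi^{D_4}_\Q(4)$ are $\cC_{13}$, $\cC_2\times \cC_{16}$, $\cC_5\times \cC_5$, and $\cC_6\times \cC_6$; each of these is realized by an elliptic curve $E/\Q$ over a biquadratic or cyclic quartic field according to Chou's tables (for example $\cC_5\times \cC_5$ and $\cC_6\times \cC_6$ arise over $\Q(\zeta_5)$ and $\Q(\zeta_7)^+$-type biquadratic fields obtained by adjoining full $p$-torsion, while $\cC_{13}$ and $\cC_2\times \cC_{16}$ arise over cyclic quartic fields). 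Conversely one verifies that none of the groups on Chou's lists fall outside the statement of the corollary. The main (and essentially only) obstacle is therefore this bookkeeping step: making sure that every group on Chou's two lists is accounted for in the stated union and that no extra group is introduced. Once that is verified, the corollary follows.

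Given the work already done, this proof is essentially an assembly: the real content has been placed in Theorem \ref{tm_D4} and Corollary \ref{cor_triv}, and no new elliptic-curve argument is required beyond citing Chou and taking a union. I would therefore expect the proof to be short, with the only non-trivial step being the explicit verification of the union against the stated formula.
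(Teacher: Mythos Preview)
Your proposal is correct and follows exactly the same approach as the paper: decompose $\Phi_\Q(4)$ as the union of $\Phi^G_\Q(4)$ over the five transitive subgroups $G$ of $S_4$, cite Chou for the Galois cases $\cC_4$ and $V_4$, invoke Corollary~\ref{cor_triv} for $A_4$ and $S_4$, and Theorem~\ref{tm_D4} for $D_4$, then take the union. One small slip in your parenthetical examples: $\Q(\zeta_5)$ is cyclic (not biquadratic), and $\Q(\zeta_7)^+$ is a cubic field, so it is not the relevant field for $\cC_6\times\cC_6$; but this does not affect the argument, since the actual realizations are supplied by Chou's tables.
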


\begin{remark}\label{rem}
 Corollary \ref{cor_Q4} implies that the list of possible pairs $(E(\Q)_{\tors}$, $E(K)_{\tors})$, where $K$ is a quartic field, obtained by the first author and Lozano-Robledo in \cite{GL16} is complete.
\end{remark}

Throughout this section, denote by $K$ a quartic number field whose normal closure $\widehat{K}$ over $\Q$ has Galois group isomorphic to $D_4$. We will say that such a quartic number field is \textit{dihedral quartic number field}.  Denote by $F$ the unique quadratic field contained in $K$. Elliptic curves will always, unless stated otherwise, be defined over $\Q$.

We will make use of the following lemma.

\begin{lemma}
\label{lem-isog}
Let $p$ be a prime, $E/F$ an elliptic curve over a number field $F$ and let $L/F$ be a finite extension such that $\Gal(\widehat {L}/F)$ does not contain as quotient a cyclic group of order $a=[F(\zeta_p): F]$, where $\widehat{L}$ is the normal closure of $L$ over $F$.

Then if $E(L)$ has a point $P$ of order $p$, it follows that $E$ has a $p$-isogeny over $F$. Furthermore  $F(P)$ is Galois over $F$ and $\Gal(F(P)/F)$ is a cyclic group of order dividing $p-1$.
\end{lemma}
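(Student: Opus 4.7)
The plan is to argue, as in the proof of Theorem~\ref{maintm}, that under our hypothesis the line $\langle P \rangle \subset E[p]$ must be $\Gal(\overline F/F)$-stable, and then to read off the two claimed consequences from standard Galois-theoretic considerations.

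First I would show the line is Galois-stable. Suppose for contradiction that there exists $\sigma \in \Gal(\overline F/F)$ with $P^\sigma \notin \langle P\rangle$. Then $P$ and $P^\sigma$ are linearly independent in $E[p] \simeq \mathbb F_p^2$, so $\langle P, P^\sigma\rangle = E[p]$. Since $P \in E(L)$ and $P^\sigma \in E(L^\sigma) \subseteq E(\widehat L)$, this forces $E[p] \subseteq E(\widehat L)$. By the Weil pairing, $\zeta_p \in \widehat L$, so $F(\zeta_p) \subseteq \widehat L$. As $F(\zeta_p)/F$ is Galois and cyclic of order $a=[F(\zeta_p):F]$, and $\widehat L/F$ is Galois, we get that $\Gal(\widehat L/F(\zeta_p))$ is a normal subgroup of $\Gal(\widehat L/F)$ with cyclic quotient of order $a$, contradicting the hypothesis. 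Hence $\langle P\rangle$ is $\Gal(\overline F/F)$-invariant, which is precisely the condition for $\ker$ of an $F$-rational $p$-isogeny on $E$, as recalled in Section~\ref{sec:auxiliary}.

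Next I would deduce the structure of $F(P)/F$. Galois-stability of $\langle P\rangle$ means that for every $\sigma \in \Gal(\overline F/F)$ there is a unique $\chi(\sigma) \in \mathbb F_p^\times$ with $P^\sigma = \chi(\sigma)\,P$. The map $\chi \colon \Gal(\overline F/F) \to \mathbb F_p^\times$ is a continuous homomorphism whose kernel is exactly $\Gal(\overline F/F(P))$. Hence $F(P)/F$ is Galois, and $\Gal(F(P)/F)$ embeds into $\mathbb F_p^\times$, yielding a cyclic group of order dividing $p-1$.

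I do not foresee a serious obstacle: the only subtle step is invoking the Weil pairing to force $\zeta_p \in \widehat L$, which is the same mechanism used in Theorem~\ref{maintm}. Everything else is a direct translation between ``$\langle P\rangle$ is Galois-invariant'', ``$E$ has an $F$-rational $p$-isogeny'', and ``$\Gal(F(P)/F) \hookrightarrow \mathbb F_p^\times$'' via the mod-$p$ representation.
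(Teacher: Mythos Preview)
Your argument is correct and follows essentially the same route as the paper: both use the Weil pairing (implicitly in the paper) to rule out $E[p]\subseteq E(\widehat L)$ via the hypothesis on cyclic quotients, conclude that $\langle P\rangle$ is $\Gal(\overline F/F)$-stable, and then read off the isogeny and the cyclicity of $\Gal(F(P)/F)$. The only cosmetic difference is that the paper argues directly (first $\zeta_p\notin\widehat L$, hence $E(\widehat L)[p]\simeq\cC_p$, hence $\langle P\rangle$ is stable), while you phrase the same implication as a proof by contradiction; your explicit construction of the character $\chi$ is a welcome elaboration of the paper's one-line conclusion.
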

\begin{proof}
Suppose $E(L)$ contains a point $P$ of order $p$. By our assumptions, $\widehat {L}$ does not contain $\Q(\zeta_p)$, since $\Gal(F(\zeta_p)/F)\simeq \cC_a$ would be a quotient of  $\Gal(\widehat {L}/F)$, contradicting our assumptions. Hence $E(\widehat L)\simeq \cC_p$, and since $\widehat L$ is Galois over $F$, it follows that $\diam{P}$ is a cyclic Galois-invariant subgroup, and hence $E$ has a $p$-isogeny $\phi$ over $F$ such that $\diam{P} =\ker \phi$.  Since $P$ is in the kernel of a $F$-rational isogeny, it follows that $F(P)$ is Galois over $F$ of degree dividing $p-1$, over $F$.
\end{proof}

\subsubsection{$p$-primary torsion subgroup}

The first step in determining $\Phi^{\operatorname{D}_4}_\Q(4)$ is to determine the possibilities for $E(K)[p^\infty]$ for all primes $p$.

\begin{lemma}\label{prop_p}
If $P$ is a point of prime order $p$ in $E(K)$, then $p\in\{2,3,5,7\}$.
\end{lemma}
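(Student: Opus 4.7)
The plan is to combine the classification of possible degrees $[\Q(P):\Q]$ supplied by Theorem \ref{exact_degree} with the isogeny/cyclicity obstruction of Lemma \ref{lem-isog}. The basic input is the divisibility constraint $[\Q(P):\Q]\mid[K:\Q]=4$, since $\Q(P)\subseteq K$. So I only need to locate those primes $p$ for which Theorem \ref{exact_degree} permits $[\Q(P):\Q]\in\{1,2,4\}$.

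First I will run through the list of primes. For $p\in\{2,3,5,7\}$ there are indeed degrees in $\{1,2,4\}$, so these are allowed (and together with Lemma \ref{prop_p}'s statement this is all we need). For $p=11$ the allowed degrees start at $5$; for $p=17$ they start at $8$; for $p=37$ they start at $12$; for $p\in\{19,43,67,163\}$ the smallest possibility is $(p-1)/2\geq 9$; and for every other prime $p\geq 23$ the smallest possibility is $2(p-1)\geq 44$. In all these cases no possible degree divides $4$, so no such $P\in E(K)$ of order $p$ can exist. The only case where the degree list meets $\{1,2,4\}$ yet $p\notin\{2,3,5,7\}$ is $p=13$, where $4$ is the unique admissible degree (the list being $3,4,6,12,\dots$); so one must have $\Q(P)=K$.

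The main obstacle, therefore, is handling $p=13$, and this is where the dihedral hypothesis on $K$ is essential. Here I will apply Lemma \ref{lem-isog} with $F=\Q$, $L=K$, $p=13$. The relevant quantity is $a=[\Q(\zeta_{13}):\Q]=12$, while $\Gal(\widehat K/\Q)\simeq D_4$ has order $8$ and so cannot admit $\cC_{12}$ as a quotient. The hypothesis of Lemma \ref{lem-isog} is met, so the lemma forces $\Q(P)/\Q$ to be cyclic of order dividing $12$. But $\Q(P)=K$ is a dihedral quartic field: its Galois closure has degree $8>4$, so $K/\Q$ is not Galois at all, let alone cyclic. This contradiction rules out $p=13$ and completes the argument, giving $p\in\{2,3,5,7\}$.
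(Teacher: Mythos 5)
Your argument is correct and follows essentially the same route as the paper: reduce to $p=13$ via degree divisibility (the paper does this via Corollary \ref{RQD}, you via Theorem \ref{exact_degree}, but these encode the same information), then invoke Lemma \ref{lem-isog} to rule out $p=13$. The only cosmetic difference is in the final contradiction — the paper concludes $\Q(P)\in\{\Q,F\}$ from Galois-ness and then cites $R_\Q(2)=\{2,3,5,7\}$, whereas you first deduce $\Q(P)=K$ from the degree list and then observe $K$ is not Galois; both are immediate consequences of the same two facts.
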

\begin{proof}

First note that $R_\Q(2)=\{2,3,5,7\}$, therefore the primes in the above set appear as orders of points over dihedral quartic number fields. By Corollary \ref{RQD}, we have $R_\Q(4)=\{2,3,5,7, 13\}$

We claim that there are no points of order 13 over dihedral quartic number fields. Suppose the opposite; let $P\in E(K)$ be a point of order 13. By Lemma \ref{lem-isog}, $\Q(P)$ is Galois over $\Q$, and hence $\Q(P)=\Q$ or $\Q(P)=F$. But that is in contradiction with the fact that $R_\Q(2)=\{2,3,5,7\}$.
\end{proof}

\begin{lemma}\label{prop_2}
 The order of $E(K)[p^\infty]$ divides $16,9,5,7$, for $p=2,3,5,7$ respectively. In particular all the possible orders occur.
\end{lemma}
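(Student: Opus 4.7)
The plan is to treat each prime $p\in\{2,3,5,7\}$ separately. The main ingredients are: (a) the Weil pairing, which forces $\zeta_{p^n}\in K$ whenever $\cC_{p^n}\times\cC_{p^n}\subseteq E(K)$; (b) Lemma~\ref{lem-isog}, which applies whenever $a=[\Q(\zeta_p):\Q]$ is not a cyclic quotient of $\Gal(\widehat K/\Q)=D_4$ (note that the abelianization of $D_4$ is $\cC_2\times\cC_2$, so the only cyclic quotients of $D_4$ have order $1$ or $2$); and (c) the already-known sets $\Phi_\Q(1)$ and $\Phi_\Q(2)$. Throughout, $K$ is non-Galois over $\Q$ and contains a unique quadratic subfield $F$.

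For $p=7$ and $p=5$, $a=6$ and $a=4$ respectively, neither of which is a cyclic quotient of $D_4$, so Lemma~\ref{lem-isog} applies. The pairing cases are immediate: $\cC_7\times\cC_7\subseteq E(K)$ fails since $[\Q(\zeta_7):\Q]=6\nmid 4$, while $\cC_5\times\cC_5\subseteq E(K)$ would force $K=\Q(\zeta_5)$, a cyclic quartic field, contradicting $D_4$. To rule out $\cC_{p^2}\subseteq E(K)$, let $P$ be a generator; Lemma~\ref{lem-isog} applied to $pP$ (a point of order $p$) produces a $\Q$-rational $p$-isogeny $\phi:E\to E'$ with kernel $\langle pP\rangle$. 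The image $\phi(P)\in E'(K)$ has order $p$, and a second application of Lemma~\ref{lem-isog} forces $\Q(\phi(P))\in\{\Q,F\}$. Studying the $\Gal(\overline{\Q}/F)$-action on the preimage coset $P+\langle pP\rangle$ shows that $[\Q(P):F]\in\{1,p\}$; since $[K:F]=2<p$, necessarily $P\in E(F)$, contradicting $\cC_{p^2}\notin\Phi_\Q(2)$.

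For $p=3$, Lemma~\ref{lem-isog} fails ($a=2$ is a quotient of $D_4$), so we take a direct approach. The Weil pairing kills $\cC_9\times\cC_9$ since $[\Q(\zeta_9):\Q]=6$. The key observation is that $F(E[3])=F\cdot\Q(E[3])$ is Galois over $\Q$ as a compositum of Galois extensions; hence, if $E[3]\subseteq E(K)$ then $F(E[3])\subseteq K$ must equal $F$ or $K$, and non-Galoisness of $K$ leaves $F(E[3])=F$, i.e., $E[3]\subseteq E(F)$, contradicting $\cC_3\times\cC_3\notin\Phi_\Q(2)$. A single application of this argument excludes $\cC_3\times\cC_3$, $\cC_3\times\cC_9$, and $\cC_9\times\cC_9$. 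The remaining case, $\cC_{27}\subseteq E(K)$, is the most delicate: a generator $P$ must satisfy $\Q(P)=K$ (since $\cC_{27}\notin\Phi_\Q(1)\cup\Phi_\Q(2)$), and Proposition~\ref{prop_pdiv} applied along the chain $P,3P,9P$ together with $[K:\Q]=4$ restricts the subfield configuration $(\Q(3P),\Q(9P))$ to a short list (in particular, $\Q(3P)\neq\Q$ since $4\nmid 9$ and $4\nmid 6$). Each resulting case is then contradicted by combining the 3-torsion Galois-conjugate argument (ruling out points of order $3$ in $E(F)\setminus E(\Q)$, since this would force $E[3]\subseteq E(F)$) with descent through a $\Q$-rational 3-isogeny with kernel $\langle 9P\rangle$ and an analysis of the image of $P$ therein.

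For $p=2$ no cyclotomic restriction is available, and this case is the main obstacle. The pairing excludes $\cC_8\times\cC_8$ since $\Q(\zeta_8)$ is biquadratic and incompatible with a $D_4$-quartic, while $\cC_4\times\cC_4\subseteq E(K)$ forces $F=\Q(i)$. The remaining configurations $\cC_{32}$, $\cC_2\times\cC_{16}$, $\cC_4\times\cC_8$ and larger are excluded iteratively via Proposition~\ref{2growth}, which restricts $\Gal(\widehat{F(P)}/F(2P))$ to the list $\{1,\cC_2,\cC_2\times\cC_2,D_4\}$, combined with the bound $\Phi_\Q(2)$ on $E(F)$ to propagate $|E(K)[2^\infty]|\mid 16$ upward from the quadratic subfield. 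Finally, each admissible order ($1,2,4,8,16$ for $p=2$; $1,3,9$ for $p=3$; $1,5$ for $p=5$; $1,7$ for $p=7$) is realised by an explicit elliptic curve over $\Q$ base-changed to an appropriate $D_4$-quartic field, e.g.\ from the tables of Chou~\cite{chou}.
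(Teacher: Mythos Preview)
Your argument has a genuine error in the $p=3$ case and a substantial gap for $p=2$.

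For $p=3$: the claim $\cC_3\times\cC_3\notin\Phi_\Q(2)$ is false---the list displayed in Section~\ref{sec:auxiliary} is incomplete (compare the Remark following Theorem~\ref{tm_D4}, which asserts $\Phi^{D_4}_\Q(4)=\Phi_\Q(2)\cup\{\cC_{20},\cC_{24}\}$ while $\cC_3\times\cC_3\in\Phi^{D_4}_\Q(4)$). In fact $\cC_3\times\cC_3$ has order $9$ and is \emph{allowed} by the lemma, so you should not be trying to exclude it. Your reduction ``$E[3]\subseteq E(K)\Rightarrow E[3]\subseteq E(F)$'' is correct, but it is not a contradiction, and by itself it does not exclude $\cC_3\times\cC_9\subseteq E(K)$ either: you have pushed only the full $3$-torsion down to $F$, not the point of order $9$. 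The paper instead quotes \cite[Theorem~7~(11)]{GL16} for $\cC_3\times\cC_9$, and for $\cC_{27}$ (and likewise $\cC_{25}$, $\cC_{49}$) uses a much simpler device you overlooked: since $K=F(\sqrt\delta)$ is quadratic over $F$, for any odd prime $p$ one has $E(K)[p^\infty]\simeq E(F)[p^\infty]\oplus E^\delta(F)[p^\infty]$, so a cyclic subgroup $\cC_{p^2}$ in $E(K)$ would force one in $E(F)$ or in $E^\delta(F)$, contradicting $\Phi(2)$. Your isogeny-descent argument for $p=5,7$ is correct but is a considerably longer route to the same conclusion.

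For $p=2$: Proposition~\ref{2growth} together with the bound from $\Phi_\Q(2)$ is not enough to exclude $\cC_4\times\cC_8$ or $\cC_2\times\cC_{16}$. For example, if $E[4]\subseteq E(F)$ (so $F=\Q(i)$ and $E(F)[2^\infty]=\cC_4\times\cC_4$, which does occur) and $P\in E(K)$ has order $8$, then Proposition~\ref{2growth} merely says $[F(P):F(2P)]\mid 4$ with Galois closure in $\{1,\cC_2,\cC_2^2,D_4\}$; this is perfectly compatible with $F(P)=K$. The paper settles these two cases by invoking the Rouse--Zureick-Brown classification of $2$-adic images \cite{rouse} and the tables of \cite{GL16}, checking that for every image producing $\cC_2\times\cC_{16}$ or $\cC_4\times\cC_8$ over a quartic field, that field is in fact Galois over $\Q$. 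No purely group-theoretic shortcut of the kind you sketch appears to replace this computation.
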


\begin{proof}
Let $n$ be a positive integer such that $n_p\le 4,2,1,1$, if $p=2,3,5,7$ respectively, then $\cC_{p^{n_p}}\in \Phi(1)\subseteq\Phi_\Q^{D_4}(4)$.

We split the proof depending on the prime $p$:

$\bullet$ $p=2$: The possible torsion subgroups of elliptic curve with CM defined over a quartic number field have been determined in \cite{clark}. In particular, it is proven that the order of the $2$-primary torsion subgroup divides $16$. Therefore we can assume $E$ is non--CM. The minimal degree of definition of any $2$-subgroup of $E(\Qbar)$ has been determined in \cite[Theorem 1.4]{GL15} in the non--CM case. In particular, for degree $4$ we have (see Table 1 in \cite{GL15}) that if $E/\Q$ is a non-CM elliptic curve and $K/\Q$ is a quartic number field then
$$
E(K)[2^\infty]\in \{\,\cC_1\,,\, \cC_2\,,\,\cC_4\,,\,\cC_8\,,\,\cC_{16}\,,\,\cC_2\times\cC_2\,,\,\cC_2\times\cC_4\,,\,\cC_2\times\cC_8\,,\,\cC_2\times\cC_{16}\,,\,\cC_4\times\cC_4\,,\,\cC_4\times\cC_8\,\}.
$$
We will prove that the groups $\cC_2\times\cC_{16}$ and $\cC_4\times\cC_8$ do not occur over dihedral quartic number fields.

Rouse and Zureick-Brown \cite{rouse} have classified all the possible $2$-adic images of $\rho_{E,2^\infty}:\Gal(\Qbar/\Q)
\to \GL_2(\Z_2)$, and have given explicitly all the $1208$ possibilities. The first author and Lozano-Robledo \cite{GL16} have determined for each possible image the degree of the field of definition of any $2$-subgroup. From the file \texttt{2primary\_Ss.txt} (this file can be found at the research website of the first author) one can read out if a given $2$-subgroup is defined over a number field of given degree $d$. In particular, the $2$-adic images up to conjugation (following the notation of \cite{rouse}) for the case $d=4$ and the given $2$-subgroups are:
\begin{itemize}
\item $\cC_2\times\cC_{16}$:  \texttt{X193j, X193l, X193n, X213i, X213k, X215c, X215d, X235f, X235l, X235m},
\item $\cC_4\times\cC_8$: \texttt{X183d, X183g, X183i, X187d, X187h, X187j, X187k, X189b, X189d, X189e, X193g,
X193i, X193n, X194g, X194h, X194k, X194l, X195h, X195j, X195l}.
\end{itemize}
 For each of these images, we \href{http://matematicas.uam.es/~enrique.gonzalez.jimenez/research/tables/growth/lem8_11.txt}{\color{blue}determine} the Galois group of the quartic number field over $\Q$ corresponding to each of the above $2$-adic images (i.e. the possible Galois groups are the normal cores of index $4$ stabilizers of the appropriate element in $(\Z/2^n\Z)^2$). We obtain that all the quartic number fields are Galois over $\Q$. This finishes the proof for the $2$-primary torsion.

$\bullet$ $p=3$: We have $\cC_3\times\cC_3\in \Phi_\Q(2)$, therefore it occurs as the torsion subgroup of elliptic curves over dihedral quartic number fields. By \cite[Theorem 7 (11)]{GL16} we have that $\cC_3\times\cC_9$ is not a subgroup of $E(K)$. Thus, to finish determining the possible $3$-primary torsion it is enough to prove that there is no $27$-torsion over a dihedral quartic number field $K$. Suppose the opposite, i.e $\cC_{27}$ is a subgroup of $E(K)$. Since $F$ is a subfield of $K$ and $[K:F]=2$, there exists a $\delta\in F$ such that $K=F(\sqrt{\delta})$. Then we have
$$
E(K)[27]\simeq E(F)[27]\oplus E^\delta(F)[27].
$$
Therefore $E(F)[27]$ or $E^\delta(F)[27]$ contains $\cC_{27}$. But this is not possible since $\cC_{27}$ is not a subgroup of any group in $\Phi(2)$.

$\bullet$ $p=5$:  It is not possible to have the full $5$-torsion over a dihedral quartic number field $K$, since otherwise $\Q(\zeta_5)=K$, and this is impossible since $\Q(\zeta_5)$ is cyclic. We observe that $\cC_{25}$ is not possible over dihedral quartic number fields by using the same argument as in the $\cC_{27}$ case.

$\bullet$ $p=7$: It is impossible that $E[7]\subseteq E(K)$ since $K\not\supseteq \Q(\zeta_7)$. To prove that there is no $49$-torsion, we again use the same argument as in the $\cC_{27}$ case.

\end{proof}

\subsubsection{Combinations of $p$-primary torsion}
In the previous section we have determined the $p$-primary torsion of an elliptic curve $E/\Q$ over a dihedral quartic number field. In this section we study the possible combinations of $p$-power torsion and $q$-power torsion, for primes $p\neq q$. First we prove that if $E(K)$ has a point of order $7$, then $E(K)[p]=\{\mathcal O\}$ for all primes $p\neq 7$.

\begin{lemma}
\label{lem7}
If $7$ divides the order of $E(K)_{\tors}$, then $E(K)_{\tors}\simeq \cC_7$.
\end{lemma}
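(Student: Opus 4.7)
The plan is to pin down first that any point of order $7$ lies in the unique quadratic subfield $F$ of $K$, and then to use this to rule out every coexisting $2$-, $3$-, or $5$-torsion point.

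I begin by applying Lemma \ref{lem-isog} with base field $\Q$, extension $K$, and prime $p=7$. The abelianization of $D_4$ is the Klein four group, so every cyclic quotient of $D_4$ has order at most $2$; in particular $D_4$ has no cyclic quotient of order $[\Q(\zeta_7):\Q]=6$. The lemma then yields a rational $7$-isogeny on $E$ and forces $P_7$ to be defined over a cyclic Galois subfield of $K$ of degree dividing $6$. The only such subfields of $K$ are $\Q$ and $F$, so $P_7\in E(F)$.

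Next I rule out coexisting $5$-torsion: if $P_5\in E(K)$, Lemma \ref{lem-isog} applied with $p=5$---using that $D_4$ has no cyclic quotient of order $[\Q(\zeta_5):\Q]=4$---gives a rational $5$-isogeny, which composed with the rational $7$-isogeny produces a rational $35$-isogeny, contradicting Theorem \ref{tm-isog}. For $2$-torsion, Theorem \ref{exact_degree} gives $[\Q(P_2):\Q]\in\{1,2,3\}$, and divisibility by $4$ reduces this to $\{1,2\}$, so $P_2\in E(F)$ and $\cC_{14}\subseteq E(F)$, contradicting $\cC_{14}\notin\Phi_{\Q}(2)$. For $3$-torsion, Theorem \ref{exact_degree} and divisibility give $[\Q(P_3):\Q]\in\{1,2,4\}$; when the degree is $1$ or $2$, $P_3\in E(F)$ and $E(F)\supseteq\cC_{21}$, ruled out because no group in $\Phi(2)$ has order divisible by $21$.

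The remaining case is $\Q(P_3)=K$. Here the $\Gal(\widehat{K}/\Q)\simeq D_4$-orbit of $P_3$ in $E[3]\setminus\{O\}$ has length $4$, and since any proper subgroup of $E[3]$ contains only two non-zero elements, these four orbit points must generate all of $E[3]$. Hence $E[3]\subseteq E(\widehat{K})$ and $G_E(3)$ is a quotient of $D_4$. Of the possibilities $\{1\},\cC_2,V_4,D_4$, the first three are excluded respectively because $\mu_3\not\subseteq\Q$, because then $[\Q(P_3):\Q]$ would be at most $2$, and because an abelian $G_E(3)$ would force $\Q(P_3)/\Q$ to be Galois, contradicting $K$ being dihedral. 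Thus $G_E(3)\simeq D_4$, which up to conjugacy in $\GL_2(\F_3)$ is realized uniquely as the normalizer of the split Cartan $C_s^+(3)$; in particular $E$ has no rational $3$-isogeny. Since the modular curve parametrizing such $E$ has genus zero, combining this with the rational $7$-isogeny reduces to a finite computation on the associated level-$21$ modular curve, where I expect to check directly that the prescribed coincidence $\Q(P_7)=F$ cannot be realized.

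Finally, Lemma \ref{prop_2} excludes $49$-torsion in $E(K)$, so combining the above $E(K)_{\tors}=\langle P_7\rangle\simeq\cC_7$. The hardest step will be the $\Q(P_3)=K$ subcase: the simultaneous constraints $G_E(3)=C_s^+(3)$ and a rational $7$-isogeny with the prescribed quadratic field of definition for $P_7$ appear to require an explicit modular-curve calculation rather than a clean group-theoretic obstruction.
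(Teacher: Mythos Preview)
Your arguments for $p=5$ and $p=2$ are correct; in fact your $p=2$ argument (forcing both $P_7$ and $P_2$ into $F$ and invoking $\cC_{14}\notin\Phi_\Q(2)$) is cleaner than the paper's, which defers to a modification of an argument of Chou. The treatment of $p=3$ when $[\Q(P_3):\Q]\le 2$ is likewise fine.

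However, there is a genuine gap in the $\Q(P_3)=K$ subcase, and you acknowledge as much. You correctly deduce that $G_E(3)$ must be conjugate to $C_s^+(3)=\texttt{3Ns}$, but then stop, saying only that you ``expect'' a modular-curve check to rule out the coexistence of this image with a rational $7$-isogeny. That computation is not supplied, and it is not a formality: both $X_s^+(3)$ (parametrized by $j=s^3$) and $X_0(7)$ have genus $0$, so their fiber product over $X(1)$ is a genuine curve whose rational points would have to be determined, and even then you would need to exploit the extra constraint that $\Q(P_7)\subseteq F$ lies inside $\Q(E[3])=\widehat K$. As written, the proof is incomplete.

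The paper sidesteps this case analysis entirely: for $p=3$ it simply cites \cite[Theorem~7(iv)]{GL16}, which shows that no elliptic curve over $\Q$ can acquire a point of order $21$ over \emph{any} quartic field. Invoking that result disposes of $p=3$ in one line, without splitting on $[\Q(P_3):\Q]$. Similarly, for $p=2$ the paper appeals to the non-existence of $14$-torsion over quartic fields rather than descending to $F$. So the paper's proof is shorter but relies on two external black boxes; your approach is more self-contained for $p=2$ but leaves the $p=3$ case unfinished.
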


\begin{proof}
Suppose  $E(K)_{\tors}$ has a point of order $7$. Let $p$ be a prime, $p\ne 7$ such that $p$ divides the order of $E(K)_{\tors}$. By Lemma \ref{prop_p} we have that $p\in\{2,3,5\}$. The case $p=2$ is not possible, since there cannot be any $14$-torsion over a quartic number field (the proof of \cite[Proposition 3.9]{chou} can be slightly modified to prove it). The case $p=3$ is also not possible, since there cannot exist points of order $21$ over a quartic number field (see \cite[Theorem 7 (iv)]{GL16}). Finally, suppose $p=5$. Then we have $E(\widehat{K})[35]\simeq \cC_{35}$, because $\widehat{K}$ does not contain $\Q(\zeta_5)$ or $\Q(\zeta_7)$, as $\cC_4$ and $\cC_6$ are not quotients of $D_4$. This implies that $E$ has a $35$-isogeny over $\Q$ which is not possible by Theorem \ref{tm-isog}.

Now, by Lemma \ref{prop_2}, we deduce that $E(K)_{\tors}\simeq \cC_7$.
\end{proof}

\begin{lemma}
If $15$ divides the order of $E(K)_{\tors}$, then $E(F)_{\tors}\simeq \cC_{15}$.
\end{lemma}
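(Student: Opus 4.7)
The strategy is to show $E(F) \supseteq \cC_{15}$; combined with Najman's classification of $\Phi_\Q(2)$ (in which $\cC_{15}$ is the only group containing a point of order $15$), this yields $E(F)_{\tors} \simeq \cC_{15}$. For the $5$-torsion, I apply Lemma \ref{lem-isog} with base field $\Q$, extension $K$, and prime $p = 5$: since $\Gal(\widehat{K}/\Q) \simeq D_4$ has abelianization $\cC_2 \times \cC_2$, it has no cyclic quotient of order $a = [\Q(\zeta_5):\Q] = 4$. The lemma thus produces a $\Q$-rational $5$-isogeny of $E$ together with any point $P \in E(K)$ of order $5$ having $\Q(P)$ Galois over $\Q$ of degree dividing $4$. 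The only Galois subfields of $K$ being $\Q$ and $F$, we obtain $P \in E(F)$.

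For the $3$-torsion, take $Q \in E(K)$ of order $3$, so $[\Q(Q):\Q] \in \{1, 2, 4\}$. In the cases $[\Q(Q):\Q] \leq 2$, $\Q(Q)$ is a subfield of $K$ of degree at most $2$, hence $\Q(Q) \in \{\Q, F\}$ and $Q \in E(F)$ as desired. The delicate case is $[\Q(Q):\Q] = 4$, i.e. $\Q(Q) = K$. In this case the $\Gal(\overline{\Q}/\Q)$-orbit of $Q$ in $E[3]\setminus\{0\}$ has four elements; since any cyclic subgroup of order $3$ contains only two nonzero elements, this orbit cannot lie in a single cyclic $\cC_3$ and therefore generates $E[3]\simeq(\F_3)^2$. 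It follows that $\widehat{K} = \Q(E[3])$ and $G_E(3) \simeq \Gal(\widehat{K}/\Q) = D_4$; up to conjugacy in $GL_2(\F_3)$ the unique $D_4$-subgroup is the normalizer of the split Cartan, so $G_E(3) = C_s^+(3)$.

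The main obstacle is to contradict the combination of $G_E(3) = C_s^+(3)$ with the $\Q$-rational $5$-isogeny and with $P \in E(F)$. The CM case is handled by Theorem \ref{tm_CM}: $G_E(3) = C_s^+(3)$ forces CM by $\Q(\sqrt{-D})$ with $\left(\tfrac{-D}{3}\right) = 1$, restricting to $D \in \{8, 11\}$. For $D = 8$ there is no principal ideal of norm $5$ in $\Z[\sqrt{-2}]$, so no $5$-isogeny exists; for $D = 11$ one has $\left(\tfrac{-11}{5}\right)=1$, whence $G_E(5) = C_s^+(5)$ by Theorem \ref{tm_CM}, and Lemma \ref{lem:divisibility3} yields $[\Q(P):\Q] \in \{8, 16\}$, contradicting $P \in E(F)$ with $[F:\Q] = 2$. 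For non-CM $E$, the pair of conditions forces $E$ onto the fibre product $X_{sp}^+(3) \times_{X(1)} X_0(5)$, a modular curve whose genus can be computed via Riemann--Hurwitz starting from $X_0(45)$ (which has genus $3$); the finitely many relevant $\Q$-rational points can be enumerated explicitly in \texttt{Magma} and each verified not to produce a dihedral quartic $K$ with $E(K) \supseteq \cC_{15}$ and $E(F)[3] = 0$. Once this verification is in hand, $[\Q(Q):\Q] = 4$ is impossible, so $Q \in E(F)$, and combining with the $5$-torsion step gives $E(F) \supseteq \cC_{15}$, hence $E(F)_{\tors} \simeq \cC_{15}$.
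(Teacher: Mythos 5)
Your $5$-torsion argument via Lemma \ref{lem-isog} is correct, as is the reduction of the $3$-torsion case to $G_E(3)\simeq D_4$ and the identification of $C_s^+(3)$ (Sutherland label \texttt{3Ns}) as the unique conjugacy class of $D_4$-subgroups of $\GL_2(\F_3)$; the CM subcase ($D\in\{8,11\}$) is also handled correctly. The gap is in the non-CM subcase: you assert that the constraints place $E$ on the modular curve $X_{sp}^+(3)\times_{X(1)}X_0(5)$ and that its genus ``can be computed'' and its rational points ``can be enumerated explicitly in \texttt{Magma} and each verified,'' but none of this is actually done. You have not shown that this curve --- isomorphic to $X_0(45)/w_9$ --- has only finitely many rational points (if it were genus $0$, or an elliptic curve of positive rank, the enumeration strategy would fail outright), and you have not explained how the extra requirements --- that the $5$-torsion point $P$ lies over the \emph{specific} quadratic field $F$ determined by the index-$2$ subgroup of $D_4\simeq\Gal(\Q(E[3])/\Q)$ containing $\Gal(\Q(E[3])/K)$, and that $E(F)[3]=0$ --- are encoded by a rational point on that curve. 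As written, this branch is a plan for a proof rather than a proof.

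The paper sidesteps all of this by invoking Theorem 8 of \cite{GL16}, which states directly that $\Q(P)$ is abelian over $\Q$ for any point $P$ of order $15$ on an elliptic curve over $\Q$. Since the only subfields of the dihedral quartic $K$ that are abelian over $\Q$ are $\Q$ and $F$, one immediately gets $P\in E(F)$, and the classification of $\Phi_\Q(2)$ then forces $E(F)_{\tors}\simeq\cC_{15}$. Your attempt to re-derive this from first principles via mod $p$ Galois images is a genuinely different and more self-contained route, and the pieces you completed are sound, but it cannot stand until the modular-curve step (genus, rank and torsion of the Jacobian or elliptic quotient, enumeration of rational points, and the case-by-case verification) is actually carried out.
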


\begin{proof}
We have that $\cC_{15}\in\Phi_\Q(2)$, therefore the case $E(K)_{\tors}\simeq \cC_{15}$ is possible. Now, let $P\in E(K)_{\tors}$ be a point of order $15$. Theorem 8 \cite{GL16} shows that $\Q(P)$ is an abelian extension. Thus, since $K$ is a dihedral quartic number field, we have that $\Q(P)=F$, where  $F$ is the unique quadratic subfield of $K$.

If there existed a point $R$ of order 30 in $E(K)_{\tors}$, then $2R$ would be a point of order $15$ and $\Q(2R)=F$ as above. The point $R$ is not defined over $F$, as $\cC_{15}\notin\Phi_\Q(2)$. But then $[\Q(R):\Q(2R)]=3$, which is a contradiction. We conclude that $E(K)_{\tors}$ cannot contain a point of order 30.

Now by Lemma \ref{prop_2} we have that the $3$-primary torsion is isomorphic to one of the groups: $\cC_3$, $\cC_9$, $\cC_3\times \cC_3$. To finish the proof we prove that $9$ does not divide the order of $E(K)_{\tors}$. If the $3$-primary torsion is isomorphic to $\cC_9$, we have that any point of order $9$ is defined over a proper subfield of $K$ (see Theorem 7 (10) from \cite{GL16}). By the previous argument we deduce that $E$ has a point of order $45$ over the quadratic field $F$, which is not possible. Finally, let us suppose that the $\cC_{15}\times \cC_3$ is a subgroup of $E(K)_{\tors}$. Then the quadratic subfield of $K$ is $\Q(\sqrt{-3})$, and we claim that both the subgroups isomorphic to $\cC_5$ and to $\cC_3\times \cC_3$ have to be defined over $\Q(\sqrt{-3})$, which is clearly impossible. For the subgroup $\cC_5$, this follows from Lemma \ref{lem-isog}, while for $\cC_3\times \cC_3$ it follows from the fact that $\Q(E[3])$ is Galois over $\Q$.

\end{proof}

We will later make use of the following lemma.

\begin{lemma}
\label{lem2x2n}
  Let $k$ be any field, $E/k$ an elliptic curve and suppose $E(k)$ contains a subgroup isomorphic to $\cC_{4n}$. Then $E$ is $2$-isogenous over $k$ to an elliptic curve $E'$ such that $E'(k)$ contains a subgroup isomorphic to $\cC_2 \times \cC_{2n}$.
\end{lemma}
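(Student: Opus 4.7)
The plan is to take the obvious $2$-isogeny and then locate the required extra $2$-torsion in its codomain as the kernel of the dual isogeny.

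Let $P\in E(k)$ be a point of order $4n$. Then $T:=2nP\in E(k)$ is a $k$-rational point of order $2$, so the subgroup $\langle T\rangle\subseteq E[2]$ is Galois-stable and defined over $k$. I would set $\phi\colon E\to E':=E/\langle T\rangle$ to be the associated $k$-rational $2$-isogeny, so that $E'/k$ is an elliptic curve and $\phi(P)\in E'(k)$.

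The first step is to compute the order of $\phi(P)$ in $E'(k)$: for $m\in\Z$, $\phi(mP)=O$ iff $mP\in\ker\phi=\{O,2nP\}$, which occurs iff $2n\mid m$; hence $\phi(P)$ has order exactly $2n$.

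The second step is to produce a point of order $2$ in $E'(k)$ not lying in $\langle\phi(P)\rangle$. Let $\hat\phi\colon E'\to E$ be the dual isogeny, which is also defined over $k$ and satisfies $\hat\phi\circ\phi=[2]$. Its kernel $\ker\hat\phi\subseteq E'[2]$ has order $2$, and since it is $\mathrm{Gal}(\bar k/k)$-stable of order $2$, both of its points must be $k$-rational. Writing $E[2](\bar k)=\{O,T,Q,T+Q\}$ for some $Q$, one checks that $\ker\hat\phi=\phi(E[2])=\{O,\phi(Q)\}$, so $R:=\phi(Q)$ is a $k$-rational point of order $2$ on $E'$.

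The last step is to verify that $R\notin\langle\phi(P)\rangle$, which, since the unique element of order $2$ in $\langle\phi(P)\rangle$ is $n\phi(P)=\phi(nP)$, amounts to showing $\phi(Q)\neq\phi(nP)$. If equality held, then $Q-nP\in\ker\phi=\langle T\rangle$, so $Q\in\{nP,nP+T\}=\{nP,3nP\}$; but $nP$ and $3nP$ both have order $4n/\gcd(4n,n)=4$, contradicting $\mathrm{ord}(Q)=2$. Hence $\langle\phi(P),R\rangle\cong\cC_{2n}\times\cC_2$ sits inside $E'(k)$, as required. The argument is essentially formal; the only place one has to be slightly careful is the Galois-invariance of $\ker\hat\phi$ giving $R$ a $k$-rational representative, but this is automatic from $\hat\phi$ being defined over $k$.
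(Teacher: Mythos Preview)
Your proof is correct and follows essentially the same route as the paper's: both take $E'=E/\langle T\rangle$ with $T$ the unique $2$-torsion point inside the $\cC_{4n}$ subgroup, observe that $\phi$ carries that subgroup onto a cyclic group of order $2n$ in $E'(k)$, and then locate an additional $k$-rational $2$-torsion point on $E'$. The only cosmetic difference is in this last step. The paper notes that $E'$ admits two distinct $k$-rational $2$-isogenies---the dual $\hat\phi$ back to $E$ and the quotient $E'\to E/C$ where $C$ is the order-$4$ subgroup of the original $\cC_{4n}$---and concludes that $E'$ has full $2$-torsion over $k$. You instead single out $\ker\hat\phi$ and verify explicitly that its generator is not the point $n\phi(P)$ (which is exactly the generator of $C/B$ in the paper's notation). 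So the two arguments are really the same computation in different packaging, yours being the more explicit of the two.
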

\begin{proof}
Let $A$ be the subgroup of $E(k)$ isomorphic to $\cC_{4n}$, let $B$ its order 2 subgroup and let $C$ be its order $4$ subgroup. We now see that $E$ is 2-isogenous to $E'=E/B$ and $E'$ is also $2$-isogenous to $E''=E/C$. Thus $E'$ has 2 independent $2$-isogenies, and hence full 2-torsion over $k$. Let $\phi:E\rightarrow E'$ be the $2$-isogeny obtained by quotienting out by $B$. Then it follows that $\phi(A)\simeq \cC_{2n}\subseteq E'(k)$, proving the lemma.
\end{proof}

\begin{lemma}
If $10$ divides the order of $E(K)_{\tors}$, then $E(K)_{\tors}$ is isomorphic to either to $\cC_{10}$, $\cC_{20}$ or to $\cC_2 \times \cC_{10}$.
\end{lemma}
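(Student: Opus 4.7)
The proof is an elimination argument, mirroring the structure of the preceding lemmas. By Lemma \ref{prop_p} the primes dividing $|E(K)_{\tors}|$ lie in $\{2,3,5,7\}$, and Lemma \ref{lem7} together with the preceding lemma on $15$-torsion rule out the presence of any $7$- or $3$-torsion once $10$ divides $|E(K)_{\tors}|$ (in both cases the torsion subgroup would collapse to $\cC_7$ or $\cC_{15}$, neither of which contains a point of order $10$). Thus $|E(K)_{\tors}|=2^a\cdot 5$ for some $a\ge 1$, and by the $2$-primary classification in Lemma \ref{prop_2} the candidate groups for $E(K)_{\tors}$ are
$$
\cC_{10},\ \cC_{20},\ \cC_{40},\ \cC_{80},\ \cC_2\times\cC_{10},\ \cC_2\times\cC_{20},\ \cC_2\times\cC_{40},\ \cC_4\times\cC_{20};
$$
the first three are the asserted ones, and it remains to exclude the last five.

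I would next show that $\cC_{10}\subseteq E(F)_{\tors}$. Since $\Gal(\widehat{K}/\Q)\simeq D_4$ has no cyclic quotient of order $[\Q(\zeta_5):\Q]=4$, Lemma \ref{lem-isog} forces the field of definition of any point of order $5$ in $E(K)$ to be Galois over $\Q$ of degree dividing $4$; together with the fact that $K$ has no intermediate fields other than $F$ this places the point in $E(F)$, and in particular $E$ admits a rational $5$-isogeny. A point $T$ of order $2$ in $E(K)$ satisfies $[\Q(T):\Q]\mid \gcd(|\GL_2(\F_2)|,4)=2$, so $T\in E(F)$ as well. Hence $E(F)_{\tors}$ contains a point of order $10$, and using $E(F)_{\tors}\in\Phi_\Q(2)$ together with the incompatibility of $\cC_{15}$ with $2\mid |E(K)_{\tors}|$, we conclude that $E(F)_{\tors}\in\{\cC_{10},\cC_2\times\cC_{10}\}$.

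Finally, I would split the five excluded cases according to the $2$-torsion of $E(K)$. The three groups $\cC_2\times\cC_{20}$, $\cC_2\times\cC_{40}$, $\cC_4\times\cC_{20}$ have full $2$-torsion, forcing $E(F)_{\tors}=\cC_2\times\cC_{10}$; here the twist decomposition $E(K)[m]\simeq E(F)[m]\oplus E^\delta(F)[m]$ for odd $m$ (with $K=F(\sqrt{\delta})$) gives $E^\delta(F)[5]=0$, and combining this with the standard bound that $E(F)_{\tors}+E^\delta(F)_{\tors}$ has $2$-power index in $E(K)_{\tors}$, one obtains strong constraints on $E^\delta(F)[2^\infty]$ that, via $E^\delta(F)_{\tors}\in\Phi(2)$, contradict the postulated $2$-primary growth. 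For the remaining two cyclic groups $\cC_{40}$ and $\cC_{80}$, one has $E(F)_{\tors}=\cC_{10}$, so the $2$-primary part jumps from $\cC_2$ over $F$ to $\cC_8$ or $\cC_{16}$ over $K$; here I would follow the strategy of the proof of Lemma \ref{prop_2} and use the Rouse--Zureick-Brown classification of $2$-adic images to enumerate the images yielding such growth over a $D_4$-quartic field, then check in \texttt{Magma} (as in \cite{GL16}) that none of these images is compatible with the $\Q$-rational $5$-isogeny imposed in the previous paragraph. The main obstacle lies exactly in this last step, since the $2$-adic and mod-$5$ Galois conditions are a priori independent and their joint incompatibility only becomes visible after a careful computer-assisted enumeration.
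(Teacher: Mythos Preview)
Your reduction to the two-prime case and the observation that $\cC_{10}\subseteq E(F)_{\tors}$ are correct, but the elimination argument for the full $2$-torsion cases has a genuine gap. Twisting over $K/F$ and invoking ``strong constraints on $E^\delta(F)[2^\infty]$ via $\Phi(2)$'' does not produce a contradiction. Concretely, take $E(K)_{\tors}\simeq \cC_2\times\cC_{20}$ with generators $U$ of order $2$ and $R$ of order $20$, and let $\sigma$ generate $\Gal(K/F)$. Since $2R\in E(F)$ one has $R^\sigma=R+T$ for some nonzero $T\in E[2]$. If $T=U$ or $T=U+10R$, a direct computation shows the $(-1)$-eigenspace of $\sigma$ on $E(K)_{\tors}$ is exactly $E[2]$, so $E^\delta(F)_{\tors}\simeq\cC_2\times\cC_2$; if $T=10R$ one gets $E^\delta(F)_{\tors}\simeq\cC_2\times\cC_4$. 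All of these lie in $\Phi(2)$, and the index of $E(F)_{\tors}+E^\delta(F)_{\tors}$ in $E(K)_{\tors}$ is $2$ in every case, so neither your index bound nor the $\Phi(2)$ constraint rules anything out. The same issue undermines the other full $2$-torsion cases.

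The paper's argument is quite different and does not separate the cyclic and non-cyclic cases. It first uses Lemma~\ref{lem2x2n} to reduce everything to excluding $\cC_2\times\cC_{20}$ (if some rational elliptic curve had $\cC_{40}$ over $K$, a $\Q$-rational $2$-isogenous curve would have $\cC_2\times\cC_{20}$ over $K$). It then twists over $F/\Q$ --- not over $K/F$ --- to arrange $E(\Q)_{\tors}\simeq\cC_{10}$, and parametrizes such curves by the universal family over $X_1(10)$. The condition that $\Q(R)$, for $R$ a half of the order-$10$ generator, contains $\Q(E[2])=\Q(\sqrt{\Delta_E})$ translates into a rational point with $y\neq 0$ on the explicit genus-$3$ hyperelliptic curve
\[
C:\; y^2=x(x-1)(x-2)(x^2-x-1)(x^2-6x+4).
\]
The rational points on $C$ are then determined by quotienting by a non-hyperelliptic involution to a genus-$1$ curve of rank $0$; only Weierstrass points survive, and the claim follows. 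Your proposed Rouse--Zureick-Brown enumeration for the cyclic cases could in principle be made to work, but it would still leave $\cC_2\times\cC_{20}$ unresolved, and that is precisely the case the paper handles by this Diophantine argument.
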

\begin{proof}
By Lemma \ref{prop_2} we need to prove that $E(K)_{\tors}$ cannot contain $\cC_2 \times \cC_{20}$ or $\cC_{40}$. But by Lemma \ref{lem2x2n}, it is enough to prove that $\cC_2 \times \cC_{20}$ is impossible.

Suppose then that $E(K)_{\tors}\supseteq \cC_2 \times \cC_{20}$. By Lemma \ref{lem-isog}, we have that $E$ has a $5$-isogeny over $\Q$ and that $\Q(P)$ is Galois over $\Q$. Thus it follows that $\Q(P)=F$ or $\Q(P)=\Q$. If $\Q(P)=F=\Q(\sqrt d)$, then we have $E(F)[5]=E(\Q)[5]\oplus E^d(\Q)[5]$. Then either $E(\Q)[5] \simeq \cC_5$ or $E^d(\Q)[5]\simeq \cC_5$. In the latter case we can study $E^d$ instead of $E$; thus it follows that we can take without loss of generality that $\Q(P)=\Q$.

 We conclude that $E(\Q)_{\tors}\simeq \cC_{10}.$ Let $Q\in E(\Q)$ be a point of order $10$. The torsion of $E$ will grow from $\cC_{10}$ in $\Q$ to $\cC_2\times \cC_{20}$, if and only if a solution $R$ of the equation $2R=Q$ satisfies that $\Q(R)$ contains the quadratic field $\Q(\sqrt{\Delta_E})=\Q(E[2])$ as a subfield. The universal elliptic curve over $X_1(10)$ (which can be thought of as  the parametric family of elliptic curves with $\cC_{10}$ torsion) is (see \cite{kub}):
$$
E_t\,:\,y^2 + (t^3 - 2t^2 - 4t + 4)xy + (t-1)(t-2)(t^2-6t+4)t^3y = x^3 -(t-1)(t-2)t^3x^2,
$$
where we can take $Q=(0,0)\in E_t[10]$.
Note that the four solutions of $2R=Q$ are defined over at most 2 number fields, but they will be conjugate to each other, and hence the following argumentation does not depend on the choice of $R$. The condition that $\Q(\sqrt{\Delta_E})$ is contained in $\Q(R)$ is, after some calculation, equivalent to the hyperelliptic curve
$$
C:y^2 = x(x-1)(x-2)(x^2-x-1)(x^2 - 6 x +4)
$$
having a solution such that $y\neq 0$. To find all the rational points on $C$, we note that $\Aut(C)\simeq D_4$ and for a non-hyperelliptic involution $\sigma \in \Aut(C)$ we \href{http://matematicas.uam.es/~enrique.gonzalez.jimenez/research/tables/growth/lem8_15.txt}{\color{blue}obtain} that the quotient curve $X=C/\diam{\sigma}$ is of genus $1$, where $X$ is defined by the following equation
$$X:y^2 = x^3 - 7x - 6.$$
We have $X(\Q)=\{\mathcal O, (-1,0), (-2,0), (3,0)\}\simeq \cC_2 \times \cC_2$ and the map $\phi: C\rightarrow X$ sends the Weierstrass points of $C$ to the $2$-torsion points of $X$. We \href{http://matematicas.uam.es/~enrique.gonzalez.jimenez/research/tables/growth/lem8_15.txt}{\color{blue}obtain} that the 4 rational Weierstrass points on $C$, $\{\infty, (0,0), (1,0), ( 2,0)\}$ map to $\{\mathcal O, (-1,0)\}$; the preimages of the 2 remaining points in $X(\Q)$ are the 4 quadratic Weierstrass points of $C$. This proves that
$$C(\Q)=\left \{\infty, (0,0), (1,0), \left(2,0\right)\right\}.$$
We have proved that $E(K)$ cannot contain $\cC_2 \times \cC_{20}$, proving the lemma.
\end{proof}

\begin{lemma}\label{6div}
If $6$ divides the order of $E(K)_{\tors}$, then $E(K)_{\tors}$ is isomorphic to either $\cC_6$, $\cC_{12}$, $\cC_{24}$, $\cC_2 \times \cC_{6}$, $\cC_2 \times \cC_{12}$ or $\cC_3 \times \cC_6$.
\end{lemma}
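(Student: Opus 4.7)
The strategy is to reduce to a finite case analysis of $E(K)[2^\infty]\oplus E(K)[3^\infty]$, and then eliminate the unwanted combinations via Galois theory, the known set $\Phi_\Q(2)$, and, for two remaining cases, the Rouse--Zureick-Brown $2$-adic classification combined with Theorem \ref{tm-isog}.

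For the reduction, Lemma \ref{lem7} together with the two preceding lemmas (on divisibility of $|E(K)_{\tors}|$ by $10$ and by $15$) imply that under the hypothesis $6\mid|E(K)_{\tors}|$ no prime other than $2$ and $3$ divides $|E(K)_{\tors}|$, so $E(K)_{\tors}=E(K)[2^\infty]\oplus E(K)[3^\infty]$ with both summands nontrivial. Lemma \ref{prop_2} restricts the $3$-primary part to $\{\cC_3,\cC_9,\cC_3\times\cC_3\}$ and the $2$-primary part to the nine groups of order dividing $16$ that remain allowed over dihedral quartic fields.

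The main Galois-theoretic input is that the only Galois subfields of $K$ over $\Q$ are $\Q$ and the unique quadratic subfield $F$, and that $K$ contains no cubic subfield. From this I would extract two principles: (a) any $\Q$-Galois-stable cyclic subgroup of $E(K)$ of $3$-power order is defined over $\Q$ or $F$, because the $D_4$-action on it factors through $D_4^{ab}\simeq V_4$, hence has image of exponent $\leq 2$ in $\Aut(\cC_{3^r})$, and must be trivial on $\Gal(\widehat K/K)$; (b) if $E[n]\subseteq E(K)$ then $\Q(E[n])\in\{\Q,F\}$. Principle (a), applied to the unique cyclic subgroup of order $9$ in $E(K)[3^\infty]$ when it exists, combined with the fact that a $2$-torsion point has field of definition of degree $1$, $2$, or $3$ over $\Q$ (Theorem \ref{exact_degree}), forces $\cC_{18}\subseteq E(F)$ under the hypothesis $\cC_{18}\subseteq E(K)$, contradicting $\Phi_\Q(2)$ and eliminating every case with $9\mid|E(K)_{\tors}|$. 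Principle (b) with $n=4$ excludes $\cC_4\times\cC_{12}$ (since $E[4]\subseteq E(F)$ combined with $\cC_3\subseteq E(F)$ yields $\cC_4\times\cC_{12}\subseteq E(F)$, not in $\Phi_\Q(2)$); with $n=6$ it excludes $\cC_6\times\cC_6$ and its super-groups. For the $\cC_3\times\cC_3$ branch, (b) with $n=3$ gives $\Q(E[3])=F=\Q(\zeta_3)$ and $|G_E(3)|=2$, which by Theorems \ref{tm_nonCM} and \ref{tm_CM} forces $E$ to be the CM curve with $j_E=0$ and image \texttt{3Cs.1.1}; the Clark--Corn--Rice--Stankewicz bound on CM torsion in quartic fields (cited in the proof of Proposition \ref{7-torsion}) then leaves $\cC_3\times\cC_6$ as the only possibility in this branch.

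The remaining candidates are $\cC_{48}$ and $\cC_2\times\cC_{24}$. Both contain points of order $>6$, ruling out the CM case again by Clark--Corn--Rice--Stankewicz, and in both the $\cC_3$-part is the unique order-$3$ subgroup of the corresponding cyclic summand, hence Galois-stable over $\Q$ by principle (a), so $E$ carries a $\Q$-rational $3$-isogeny. The $\cC_{16}$ (respectively $\cC_2\times\cC_8$) summand then forces the $2$-adic image of $E$ to lie in the short list of Rouse--Zureick-Brown conjugacy classes producing $\cC_{16}$ (respectively $\cC_2\times\cC_8$) over a dihedral quartic, computed exactly as in the proof of Lemma \ref{prop_2}. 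The main obstacle, which I expect to be the computationally hardest step, is to verify in \texttt{Magma} that none of these $2$-adic images is compatible with the existence of a $\Q$-rational $3$-isogeny --- equivalently, that the image in $X(1)$ of the corresponding modular curve is disjoint from the $j$-line image of $X_0(3)(\Q)$.
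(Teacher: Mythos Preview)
Your outline has two genuine gaps, both concerning the mod $3$ image.

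\textbf{The $\cC_3\times\cC_3$ branch.} Your claim that $|G_E(3)|=2$ forces $j_E=0$ is false. The image \texttt{3Cs.1.1} (order $2$) is listed in Table~\ref{tableSutherland} precisely because it occurs for infinitely many \emph{non-CM} curves over $\Q$: the associated modular curve has genus $0$ and a rational point. So the reduction to CM does not go through, and in particular you have not excluded $\cC_3\times\cC_{12}$. The paper handles $\cC_3\times\cC_{12}$ (and $\cC_4\times\cC_{12}$) by citing \cite[Theorem~8]{BN16}; an independent argument is not attempted.

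\textbf{The $\cC_2\times\cC_{24}$ and $\cC_{48}$ cases.} Your assertion that the unique $\cC_3\subseteq E(K)$ is $\Gal(\overline\Q/\Q)$-stable, hence yields a $\Q$-rational $3$-isogeny, fails exactly when $G_E(3)$ is conjugate to \texttt{3Ns}. In that case $[\Q(P):\Q]=4$ for every $3$-torsion point $P$, the $3$-division field is a $D_4$-octic (so can coincide with $\widehat K$), and $C_s^+(3)$ fixes no line in $\F_3^2$, so there is no $3$-isogeny over $\Q$. Your proposed final computation ---intersecting the relevant $2$-adic Rouse--Zureick-Brown images with $X_0(3)(\Q)$--- therefore misses this case entirely. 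The paper's proof splits on whether $G_E(3)=\texttt{3Ns}$: in the \texttt{3Ns} case it equates $j(E)=s^3$ with the $j$-parametrisations for $E(\Q)[2^\infty]\supseteq\cC_2\times\cC_2$ or $\cC_4$, lands on explicit rank-$0$ elliptic curves, and checks the finitely many points are CM; in the non-\texttt{3Ns} case it shows $E(F)[3]\neq\{\mathcal O\}$ (not merely a $3$-isogeny) and combines this with further Rouse--Zureick-Brown searches and \cite{BN16}. Your plan does not reproduce either half of this.

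A smaller point: your principle (a) is applied to the $\cC_9$ inside $E(K)$ without verifying that it is $\Gal(\overline\Q/\Q)$-stable. If $E(\widehat K)[9]\simeq\cC_9\times\cC_3$ and the central element $r^2\in D_4$ acts as $-1$ on $\langle P\rangle$, then $r$ cannot preserve $\langle P\rangle$ (since $-1$ is a nonsquare in $(\Z/9\Z)^\times$), so stability genuinely fails. The paper sidesteps this by citing \cite[Theorem~7(11)]{GL16} for the impossibility of $\cC_{18}$ over quartic fields.
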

\begin{proof}
By \cite[Theorem 7 (11)]{GL16}, we see that $E(K)_{\tors}$ cannot contain $\cC_{18}$.

We can see that $E(K)_{\tors}$ does not contain $\cC_6 \times \cC_6$, as $\Q(E[6])$ is a Galois extension of $\Q$, hence it would have to be either $\Q$ or $F$. But it is impossible to have full 6-torsion over $\Q$ or over a quadratic field.

By \cite[Theorem 8]{BN16}, we see that $\cC_3 \times \cC_{12}$ and $\cC_4 \times \cC_{12}$ cannot be contained in $E(K)_{\tors}$.

We claim that $E(K)_{\tors}\supseteq \cC_2 \times \cC_{24}$ is impossible. By a \href{http://matematicas.uam.es/~enrique.gonzalez.jimenez/research/tables/growth/lem8_16a.txt}{\color{blue}search} of the database of $2$-adic images \cite{rouse}, we obtain that the $2$-power torsion of $E$ cannot grow from $\cC_2$ over $\Q$ to $\cC_2 \times \cC_{8}$  in a $D_4$ extension.

It follows that $E(\Q)[2^\infty]\supseteq\cC_2 \times \cC_2$ or $E(\Q)[2^\infty]\supseteq \cC_4$.

Now we split the proof in two cases: either $G_E(3)$ is conjugate to \texttt{3Ns} or not.

We will show that if $E(\Q)[2^\infty]\supseteq\cC_2 \times \cC_2$ or $E(\Q)[2^\infty]\supseteq \cC_4$, then $G_E(3)$ is not conjugate to \texttt{3Ns}. By \cite[Theorem 1.1]{zyw}, $E(\Q)[2^\infty]\supseteq\cC_2 \times \cC_2$ if and only if
$$j(E)=256\frac{(t^2+t+1)^3}{t^2(t+1)^2} \quad \text{ for some }t \in \Q \smallsetminus \{0,-1\}.$$
On the other hand, if $G_E(3)$ is conjugate in $\GL_2(\F_3)$ to \texttt{3Ns}, then by \cite[Theorem 1.2]{zyw}, we have
$$j(E)=s^3 \quad \text{ for some }s \in \Q.$$
Hence we obtain the equation
$$C\,:\,s^3=256\frac{(t^2+t+1)^3}{t^2(t+1)^2},$$
which cuts out a singular genus $1$ curve, which is birationally equivalent to
$$X_1\,:\,y^2=x^3+1,$$
which has rank $0$ and $6$ rational points. We \href{http://matematicas.uam.es/~enrique.gonzalez.jimenez/research/tables/growth/lem8_16b.txt}{\color{blue}obtain} that the only (affine)rational points on $X_1$ satisfy $s=12$, implying $j(E)=1728$. But by \cite[\S 4.4]{clark}, elliptic curves with CM cannot have $\cC_2 \times \cC_{24}$ torsion over quartic number fields.

If $E(\Q)[2^\infty]\supseteq \cC_4$, then $E$ has a 4-isogeny over $\Q$, from which it follows that
\cite[Table 3]{loz}
$$j(E)=\frac{(h^2+16h+16)^3}{h(h+16)} \text{ for some }h \in \Q \smallsetminus \{0,-16\}.$$
Thus, $h(h+16)$ needs to be a cube, giving an equation of an elliptic curve
$$X_2:h^2+16h=v^3.$$
 The elliptic curve $X_2$ \href{http://matematicas.uam.es/~enrique.gonzalez.jimenez/research/tables/growth/lem8_16b.txt}{\color{blue} has} rank 0 over $\Q$ and its torsion points (with the assumption $h\neq 0, -16$) give $j(E)=1728$ and $287496$, both of which are CM values. Again, by \cite[\S 4.4]{clark}, we note that elliptic curves with CM cannot have $\cC_2 \times \cC_{24}$ torsion over quartic number fields, which rules out this possibility.

Thus, we have proved $G_E(3)$ is not conjugate to \texttt{3Ns}. We claim that then $E(F)[3]\neq \{\mathcal O\}$. To see this, in Table \ref{tableSutherland}, we see that the only possibility of $\Q(P)$ being equal to $K$ (which is equivalent to $d_v=4$ for some $v$) is that $G_E(3)$ is conjugate to \texttt{3Cs}. But if $G_E(3)$ is \texttt{3Cs} and $[\Q(P):\Q]=4$ for a point $P$ of order $3$, then it would follow, since $\Q(P)\subseteq \Q(E[3])$ and both $\Q(E[3])$ and $\Q(P)$ are of degree $4$, that $\Q(E[3])=\Q(P)=K$. But $\Q(E[3])$ is Galois over $\Q$ and $K$ is not. We conclude that $E(F)[3]\neq \{\mathcal O\}$.

We now claim that if $E(\Q)[2^\infty]\supseteq\cC_2 \times \cC_2,$ then $E(K)\supseteq \cC_2\times \cC_{24}$ is impossible. Since $E(F)[3]=E(\Q)[3]\oplus E^d(\Q)[3]$, where $F=\Q(\sqrt{d})$, so either $E$ or $E^d$ has a $3$-torsion point; we choose without loss of generality $E$ to be the twist with $3$-torsion. Since quadratic twisting does not change the 2-torsion, we have that $E(\Q)\supseteq \cC_2 \times \cC_6$.  By a \href{http://matematicas.uam.es/~enrique.gonzalez.jimenez/research/tables/growth/lem8_16a.txt}{\color{blue}search} of the database of $2$-adic images \cite{rouse}, we find that there exist $2$-adic representations such that $2$-power torsion of $E$ grows from $\cC_2\times \cC_2$ over $\Q$ to $\cC_2 \times \cC_{8}$ over $F$ or/and over $K$. In the former case we obtain $\cC_2\times\cC_{24}\subseteq E(F)$, which is impossible over  quadratic fields. In the last case, $\cC_2\times \cC_8\subset E(K)[2^\infty]$, we obtain that the field $L=\Q(E[4])$ is a quartic number field. But together with our assumption that $E(\Q)[3]\neq 0$, we have that $E(L)$ contains a subgroup isomorphic to $\cC_4\times \cC_{12}$, which is in contradiction with \cite[Theorem 7]{BN16}.

It remains to prove that if $E(\Q)[2^\infty] \supseteq \cC_4$ then $E(K)$ cannot contain $\cC_2\times \cC_{24}$. By a \href{http://matematicas.uam.es/~enrique.gonzalez.jimenez/research/tables/growth/lem8_16a.txt}{\color{blue}search} of the database of $2$-adic images \cite{rouse}, we find that if $E(\Q)[2^\infty]\supseteq \cC_4$ and $E(K)[2^\infty]\supseteq \cC_2 \times \cC_8$, then it is necessary that already $E(F)[2^\infty]\supseteq \cC_2 \times \cC_8$. But this implies that $E(F)\supseteq \cC_2 \times \cC_{24}$, which is impossible  over quadratic number fields.

Thus we have proved that $\cC_2 \times \cC_{24}$ torsion is impossible over $K$.

As $\cC_2 \times \cC_{24}$ torsion is impossible over $K$, it follows by Lemma \ref{lem2x2n} that $\cC_{48}$ is impossible.

\end{proof}

\begin{proof}[Proof of Theorem \ref{tm_D4}]
Finally, to prove Theorem \ref{tm_D4}, we note that all the groups that are in $\Phi_\Q(2)$ are also in $\Phi_\Q^{\operatorname{D_4}}(4)$ and by \cite{jk-dihedral}, there exists infinitely many elliptic curves with $\cC_{20}$ and $\cC_{24}$ torsion over dihedral quartic number fields.  Therefore $\Phi_\Q(2) \cup\left\{  \cC_{20}\,,\,\cC_{24} \right\}\subseteq \Phi^{\operatorname{D}_4}_\Q(4)$. The equality holds by Lemmas \ref{prop_2}-\ref{6div}.
\end{proof}

\begin{proof}[Proof of Corollary \ref{cor_Q4}] Let $K$ a quartic number field, then the group $\Gal(\widehat{K}/\Q)$ is isomorphic to $\operatorname{S_4}$, $\operatorname{A_4}$, $\operatorname{D_4}$, $\operatorname{V_4}$ or $\cC_4$. Therefore
$$
\Phi_\Q(4)= \Phi^{\operatorname{S}_4}_\Q(4)\,\cup \,\Phi^{\operatorname{A}_4}_\Q(4)\,\cup \,\Phi^{\operatorname{D}_4}_\Q(4)\,\cup \,\Phi^{\operatorname{V}_4}_\Q(4)\,\cup \,\Phi^{\cC_4}_\Q(4).
$$
The cases $\Phi^{\operatorname{V}_4}_\Q(4)$ and $\Phi^{\cC_4}_\Q(4)$ have been determined by Chou \cite{chou}. The remaining cases are dealt with in Corollary \ref{cor_triv} and Theorem  \ref{cor_Q4}.
\end{proof}

\

\noindent {\bf Acknowledgements:} We thank Abbey Bourdon for suggesting Remark \ref{abbey}, Pete Clark for pointing out a mistake in a previous version of the paper and Maarten Derickx for pointing out the example in Remark \ref{remark:maarten}.

\clearpage

\section*{Appendix: Images of Mod $p$ Galois representations associated to elliptic curves over $\Q$}

For each possible known subgroup $G_E(p)\subsetneq \GL_2(\F_p)$ where $E/\Q$ is a non-CM elliptic curve and $p$ is a prime, Tables \ref{tableSutherland} and  \ref{tableSutherland2} list in the first and second column the corresponding labels in Sutherland and Zywina notations, and the following data:
\begin{itemize}
\item $d_v=[G_E(p):G_E(p)_v]=|G_E(p).v|$ for $v\in\F_p^2$, $v\ne (0,0)$; equivalently, the degrees of the extensions $\Q(P)$ over $\Q$ for points $P\in E(\overline \Q)$ of order $p$.
\item$d=|G_E(p)|$; equivalently, the degree $\Q(E[p])$ over $\Q$.
\end{itemize}

Note that Tables \ref{tableSutherland} and  \ref{tableSutherland2} are partially extracted from Table 3 of \cite{Sutherland2}. The difference is that \cite[Table 3]{Sutherland2} only lists the minimum of $d_v$, which is denoted by $d_1$ therein. The \href{http://matematicas.uam.es/~enrique.gonzalez.jimenez/research/tables/growth/Table1and2.txt}{\color{blue} computations} needed to obtain these Tables were performed in Magma.

\begin{table}[h!]
\begin{footnotesize}
\begin{tabular}{cc}

\begin{tabular}{@{\hskip -6pt}cllcc}
& \multicolumn{1}{c}{Sutherland}  &  \multicolumn{1}{c}{Zywina}  &$d_v $ & $d$\\\toprule
&\texttt{2Cs} & $G_1$   & 1 & 1\\
&\texttt{2B} &$G_2$  & 1\,,\,2 & 2\\
&\texttt{2Cn} & $G_3$  & 3  & 3\\
\midrule
&\texttt{3Cs.1.1} &$H_{1,1}$ & 1\,,\,2 & 2\\
&\texttt{3Cs} & $G_1$  & 2\,,\,4  & 4 \\
 &\texttt{3B.1.1} & $H_{3,1}$ & 1\,,\,6  &6 \\
&\texttt{3B.1.2} &  $H_{3,2}$ & 2\,,\,3  & 6\\
&\texttt{3Ns} &$G_2$  & 4  & 8\\
&\texttt{3B} & $G_3$  & 2\,,\,6  & 12\\
&\texttt{3Nn} & $G_4$  & 8  & 16\\
\midrule
&\texttt{5Cs.1.1} & $H_{1,1}$  & 1\,,\,4  & 4\\
&\texttt{5Cs.1.3} & $H_{1,2}$ & 2\,,\,4  & 4\\
&\texttt{5Cs.4.1} & $G_1$  & 2\,,\,4\,,\,8  & 8\\
&\texttt{5Ns.2.1} & $G_3$  & 8\,,\,16  & 16\\
&\texttt{5Cs} &  $G_2$  & 4\,,\,4  & 16\\
&\texttt{5B.1.1} & $H_{6,1}$  & 1\,,\,20  & 20\\
&\texttt{5B.1.2} & $H_{5,1}$  & 4\,,\,5  & 20\\
&\texttt{5B.1.4} & $H_{6,2}$  & 2\,,\,20  & 20\\
&\texttt{5B.1.3} & $H_{5,2}$  & 4\,,\,10 & 20\\
&\texttt{5Ns} & $G_{4}$  & 8\,,\,16  & 32\\
&\texttt{5B.4.1} & $G_{6}$  & 2\,,\,20  & 40\\
&\texttt{5B.4.2} & $G_{5}$  & 4\,,\,10  & 40\\
&\texttt{5Nn} & $G_{7}$  & 24  & 48\\
&\texttt{5B} & $G_{8}$  & 4\,,\,20 & 80\\
&\texttt{5S4} & $G_{9}$ & 24  & 96 \\
\bottomrule
\end{tabular}

&\qquad\quad

\begin{tabular}{@{\hskip -6pt}cllcc}
& \multicolumn{1}{c}{Sutherland}  &  \multicolumn{1}{c}{Zywina}   &$d_v $ & $d$\\\toprule
&\texttt{7Ns.2.1} &  $H_{1,1}$ &   $ 6\,,\,9\,,\,18$  & 18 \\
&\texttt{7Ns.3.1} & $G_{1}$ & $12\,,\,18$ & 36 \\
&\texttt{7B.1.1} & $H_{3,1}$ & $ 1\,,\,42$ & 42 \\
&\texttt{7B.1.3} & $H_{4,1}$ & $ 6\,,\, 7$  & 42 \\
 &\texttt{7B.1.2} & $H_{5,2}$ & $ 3\,,\,42$ & 42 \\
&\texttt{7B.1.5} & $H_{5,1}$ &  $ 6\,,\,21$   & 42 \\
 &\texttt{7B.1.6} & $H_{3,2}$ &  $ 2\,,\,21$ & 42 \\
 &\texttt{7B.1.4} &$H_{4,2}$ &  $ 3\,,\,14$   & 42 \\
 &\texttt{7Ns} & $G_{2}$ &  $ 12\,,\,36$ & 72 \\
  &\texttt{7B.6.1} & $G_{3}$  & $ 2\,,\,42$   & 84 \\
  &\texttt{7B.6.3} & $G_{4}$  & $ 6\,,\, 14$   & 84 \\
  &\texttt{7B.6.2} & $G_{5}$  & $ 6\,,\, 42$   & 84 \\
  &\texttt{7Nn} & $G_{6}$  & $ 48$&    96 \\
   &\texttt{7B.2.1} & $H_{7,2}$&  $ 3\,,\,42$   & 126 \\
   &\texttt{7B.2.3} & $H_{7,1}$ &  $ 6\,,\, 21$   & 126 \\
   &\texttt{7B} & $G_{7}$ & $ 6\,,\, 42$ & 252 \\
\midrule
&\texttt{11B.1.4} & $H_{1,1}$  & 5\,,\,110 & 110\\
&\texttt{11B.1.5} & $H_{2,1}$  & 5\,,\,110  & 110\\
&\texttt{11B.1.6} & $H_{2,2}$  & 10\,,\,55  & 110\\
&\texttt{11B.1.7} & $H_{1,2}$  & 10\,,\,55  & 110\\
&\texttt{11B.10.4} & $G_{1}$  & 10\,,\,110  & 220\\
&\texttt{11B.10.5} & $G_{2}$  & 10\,,\,110  & 220\\
&\texttt{11Nn} & $G_{3}$ & 120 & 240\\
  \bottomrule
 & & & & \\
 & & & & \\
 \end{tabular}

\bigskip
\smallskip
\end{tabular}
\end{footnotesize}
\caption{Possible images $G_E(p)\ne \GL_2(\F_p)$, for $p\le 11$, for non-CM elliptic curves $E/\Q$.}\label{tableSutherland}
\vspace{10pt}
\end{table}
\smallskip
\begin{table}
\begin{footnotesize}
\begin{tabular}{cc}

\begin{tabular}{@{\hskip -6pt}cllcc}
& \multicolumn{1}{c}{Sutherland}  &  \multicolumn{1}{c}{Zywina}  &$d_v $ & $d$\\\toprule
&\texttt{13S4} &$G_{7}$ & $72\,,\,96$ & 288 \\
 &\texttt{13B.3.1} &$H_{5,1}$ & $3 \,,\, 156$ & 468 \\
 &\texttt{13B.3.2} & $H_{4,1}$ & $ 12\,,\, 39$ & 468 \\
 &\texttt{13B.3.4} & $H_{5,2}$ & $ 6\,,\,156$ & 468 \\
 &\texttt{13B.3.7} & $H_{4,2}$ & $ 12 \,,\, 78$ & 468 \\
  &\texttt{13B.5.1} & $G_{2}$ & $ 4 \,,\, 156$ & 624 \\
  &\texttt{13B.5.2} & $G_{1}$ & $ 12 \,,\, 52$ & 624\\
  &\texttt{13B.5.4} & $G_{3}$ & $ 12 \,,\, 156$ & 624 \\
  &\texttt{13B.4.1} & $G_{5}$ & $ 6 \,,\, 156$  & 936 \\
  &\texttt{13B.4.2} & $G_{4}$ & $ 12 \,,\, 78$   & 936 \\
  &\texttt{13B} & $G_{6}$ & $ 12 \,,\, 156$  & 1872 \\
  \bottomrule
\end{tabular}

&\qquad\quad

\begin{tabular}{@{\hskip -6pt}cllcc}
& \multicolumn{1}{c}{Sutherland}  &  \multicolumn{1}{c}{Zywina}   &$d_v $ & $d$\\\toprule
&\texttt{17B.4.2} & $G_{1}$ & $ 8 \,,\, 272$  & 1088 \\
&\texttt{17B.4.6} & $G_{2}$ & $ 16 \,,\, 136$  & 1088 \\
\midrule
 &\texttt{37B.8.1} & $G_{1}$ & 12 \,,\,  1332 & 15984 \\
 &\texttt{37B.8.2} & $G_{2}$ &  36 \,,\, 444 & 15984 \\
   \bottomrule
 & & & & \\
 & & & & \\
 & & & & \\
 & & & & \\
 & & & & \\
 & & & & \\
 \end{tabular}

\bigskip
\smallskip
\end{tabular}

\end{footnotesize}
\caption{Known images $G_E(p)\ne \GL_2(\F_p)$, for $p=13,17$ or $37$, for non-CM elliptic curves $E/\Q$.}\label{tableSutherland2}
\vspace{10pt}
\end{table}
\smallskip

\end{document}